\newcommand\nc\newcommand
\renewcommand
\newcommand{\JE}[1]{{\color{blue}\sf [JE: #1]}}
\nc\JEnew[1]{\textcolor{magenta}{#1}}
\renewcommand{\S}{\mathcal{S}}
\nc\T{\mathcal T}
\nc\Reg{\operatorname{Reg}}
\nc\A{\mathcal A}
\nc\E{\mathcal E}
\nc\EE{\boldsymbol{\E}}
\nc\F{\mathcal F}
\nc\FF{\boldsymbol{\F}}
\nc\br{{\bf r}}
\nc\bk{{\bf k}}
\nc\bl{{\bf l}}
\nc\bL{{\bf L}}
\nc\bR{{\bf R}}
\nc\bB{{\bf B}}
\nc\bbL{\mathbb L}
\nc\bbR{\mathbb R}
\nc\Eq{\mathfrak{Eq}}
\nc\Ht{\operatorname{\sf Ht}}
\nc\Zh{\includegraphics[width=3.5mm]{Zh.pdf}}
\nc\Wh{\includegraphics[width=3.8mm]{Wh.pdf}}
\nc\Dh{{\lower0.35 ex\hbox{$\includegraphics[width=2.5mm]{Dh.pdf}$}}}
\nc\Jh{{\lower0.42 ex\hbox{$\includegraphics[width=2.8mm]{Jh.pdf}$}}}
\DeclareMathSymbol{\widehatsym}{\mathord}{largesymbols}{"62}
\newcommand\lowerwidehatsym{%
  \text{\smash{\raisebox{-1.3ex}{%
    $\widehatsym$}}}}
\newcommand\fixwidehat[1]{%
  \mathchoice
    {\accentset{\displaystyle\lowerwidehatsym}{#1}}
    {\accentset{\textstyle\lowerwidehatsym}{#1}}
    {\accentset{\scriptstyle\lowerwidehatsym}{#1}}
    {\accentset{\scriptscriptstyle\lowerwidehatsym}{#1}}
}
\nc\trans[1]{\big(\begin{smallmatrix}#1\end{smallmatrix}\big)}
\nc\Btrans[1]{\Big(\begin{smallmatrix}#1\end{smallmatrix}\Big)}
\renewcommand{\H}{\mathrel{\mathscr H}}
\renewcommand{\L}{\mathrel{\mathscr L}}
\newcommand{\R}{\mathrel{\mathscr R}}
\newcommand{\D}{\mathrel{\mathscr D}}
\newcommand{\J}{\mathrel{\mathscr J}}
\newcommand{\K}{\mathrel{\mathscr K}}
\nc\leqR{\leq_{\R}}
\nc\leqL{\leq_{\L}}
\nc\leqJ{\leq_{\J}}
\nc\leqK{\leq_{\K}}
\nc\geqR{\geq_{\R}}
\nc\geqL{\geq_{\L}}
\nc\geqJ{\geq_{\J}}
\nc\hL{\mathrel{\wh{\mathscr L}}}
\nc\hR{\mathrel{\wh{\mathscr R}}}
\nc\hH{\mathrel{\wh{\mathscr H}}}
\nc\hJ{\mathrel{\wh{\mathscr J}}}
\nc\hD{\mathrel{\wh{\mathscr D}}}
\nc\hK{\mathrel{\wh{\mathscr K}}}
\newcommand{\Cong}{\operatorname{\sf Cong}}
\newcommand{\rank}{\operatorname{rank}}
\newcommand{\rk}{\operatorname{rank}}
\newcommand{\id}{\operatorname{id}}
\nc{\ldb}{[\hspace{-0.5truemm}[}
\nc{\rdb}{]\hspace{-0.5truemm}]}
\numberwithin{equation}{section}
\newtheorem{thm}[equation]{Theorem}
\newtheorem{lemma}[equation]{Lemma}
\newtheorem{cor}[equation]{Corollary}
\newtheorem{prop}[equation]{Proposition}
\theoremstyle{definition}
\newtheorem{rem}[equation]{Remark}
\newcommand{\restr}{{\restriction}}
\nc\LV{\mathcal L_V}
\nc\bit{\begin{itemize}}
\nc\eit{\end{itemize}}
\nc\ben{\begin{enumerate}[label=\textup{(\roman*)},leftmargin=7mm]}
\nc\bena{\begin{enumerate}[label=\textup{(\alph*)},leftmargin=7mm]}
\nc\een{\end{enumerate}}
\nc\bmc{\begin{multicols}}
\nc\emc{\end{multicols}}
\nc\set[2]{\{#1:#2\}}
\nc\bigset[2]{\big\{#1:#2\big\}}
\nc\Bigset[2]{\Big\{#1:#2\Big\}}
\nc\im{\operatorname{im}}
\nc\LSUB{\operatorname{LSUB}}
\nc\Span{\operatorname{Span}}
\nc\Aut{\operatorname{Aut}}
\nc\GL{\operatorname{GL}}
\nc\drank{\operatorname{drank}}
\nc\codim{\operatorname{codim}}
\nc\B{\mathscr B}
\nc\cC{\mathcal{C}}
\nc\cP{\mathcal{P}}
\nc\bP{\mathbf{P}}
\nc\iR{\mathfrak{R}}
\nc\iL{\mathfrak{L}}
\nc\iK{\mathfrak{K}}
\nc\bI{\mathbf{I}}
\nc\bJ{\mathbf{J}}
\nc\AND{\qquad\text{and}\qquad}
\nc\WHERE{\qquad\text{where}\qquad}
\nc\WHERe{\quad\text{where}\quad}
\nc\OR{\qquad\text{or}\qquad}
\nc\ANd{\quad\text{and}\quad}
\nc\anD{\ \ \ \text{and}\ \ \ }
\nc\ANDSIM{\qquad\text{and similarly}\qquad}
\nc{\COMMA}{,\qquad}
\nc{\COMMa}{,\quad}
\rnc\iff{\ \Leftrightarrow\ }
\nc\IFF{\qquad \Leftrightarrow\qquad }
\rnc\implies{\ \Rightarrow\ }
\nc\IFf{\quad\Leftrightarrow\quad}
\nc\pf{\begin{proof}}
\nc\epf{\end{proof}}
\nc\epfres{\hfill\qed}
\nc\epfreseq{\tag*{\qed}}
\let\oldproofname=\proofname
\renewcommand{\proofname}{\rm\bf{\oldproofname}}
\nc{\pfitem}[1]{\medskip\noindent #1.}
\nc{\firstpfitem}[1]{#1.}
\nc{\pfcase}[1]{\medskip\noindent {\bf Case #1.}}
\nc{\pfstep}[1]{\medskip\noindent {\bf Step #1.}}
\nc\aftercases{\medskip\noindent}
\nc{\pfclaim}[1]{\medskip\noindent{\bf Claim #1.}}
\nc{\pfclaimnn}{\medskip\noindent{\bf Claim.} } 
\nc\afterclaim{\medskip}
\nc{\pfsubcase}[1]{\medskip\noindent {\bf Subcase #1.}}
\nc\bu{{\bf u}}
\nc\bv{{\bf v}}
\nc\bw{{\bf w}}
\nc\ba{{\bf a}}
\nc\bb{{\bf b}}
\nc\bc{{\bf c}}
\nc\bzero{{\bf 0}}
\DeclareMathAlphabet{\mymathbb}{U}{BOONDOX-ds}{m}{n}
\nc\zero{\mymathbb 0}
\begin{document}

\title{\vspace{-1cm}Congruences of maximum regular subsemigroups of variants of finite full transformation semigroups\vspace{-1cm}}

\date{}
\author{}

\maketitle
\begin{center}
{\large 
Igor Dolinka,%
\hspace{-.25em}\footnote{Department of Mathematics and Informatics, University of Novi Sad, Trg Dositeja Obradovi\'ca 4, 21101 Novi Sad, Serbia. {\it Email:} {\tt dockie@dmi.uns.ac.rs}}
James East,%
\hspace{-.25em}\footnote{Centre for Research in Mathematics and Data Science, Western Sydney University, Locked Bag 1797, Penrith NSW 2751, Australia. {\it Email:} {\tt j.east@westernsydney.edu.au}}
Nik Ru\v{s}kuc%
\footnote{Mathematical Institute, School of Mathematics and Statistics, University of St Andrews, St Andrews, Fife KY16 9SS, UK. {\it Email:} {\tt nik.ruskuc@st-andrews.ac.uk}}
}
\end{center}

\vspace{0.5cm}

\begin{abstract}
\noindent
Let $\T_X$ be the full transformation monoid over a finite set $X$, and fix some $a\in\T_X$ of rank $r$.  The variant $\T_X^a$ has underlying set $\T_X$, and operation $f\star g=fag$.  We study the congruences of the subsemigroup $P=\Reg(\T_X^a)$ consisting of all regular elements of $\T_X^a$, and the lattice $\Cong(P)$ of all such congruences.  Our main structure theorem ultimately decomposes $\Cong(P)$ as a specific subdirect product of $\Cong(\T_r)$, and the full equivalence relation lattices of certain combinatorial systems of subsets and partitions.  We use this to give an explicit classification of the congruences themselves, and we also give a formula for the height of the lattice.

\medskip

\noindent
\emph{Keywords}: Congruence, congruence lattice, full transformation semigroup, variant, subdirect product.

\medskip

\noindent
MSC (2020): 20M20, 20M10, 08A30.

\end{abstract}

\tableofcontents

\section{Introduction}\label{sect:intro}

In the 1950s, Mal'cev classified the congruences of transformation monoids \cite{Malcev1952} and matrix monoids \cite{Malcev1953}.  These two papers initiated a new line of research in semigroup theory and were followed by a steady stream of papers, treating partial transformation monoids \cite{Sutov1961}, symmetric inverse monoids \cite{Liber1953} and many others.  More recent articles in this area have moved in other directions, including diagram monoids \cite{EMRT2018} and direct products of (linear) transformation monoids \cite{ABG2018}.

The paper \cite{ER2023} provides a unified framework for understanding the congruences  of many of the above monoids, as well as associated categories and their ideals; it also contains a fuller discussion of the history of the topic, and an extensive bibliography.  The monoids and categories amenable to analysis via the tools of \cite{ER2023} share a number of structural features: they are regular and stable; their ideals form a chain of order type $\leq\omega$; and they satisfy certain \emph{separation properties} related to Green's equivalences.  

The current article takes yet another direction in the congruence classification program, this time moving towards \emph{semigroup variants}.  The first studies of variants were by Hickey \cite{Hickey1983,Hickey1986}, building on older ideas of Lyapin \cite{Lyapin1960} and Magill \cite{Magill1967}, which were eventually unified categorically \cite{Sandwich1,Sandwich2}.  Given a semigroup $S$, and a fixed element $a\in S$, a new \emph{sandwich operation} ${\star}$ is defined by $x\star y = xay$ for $x,y\in S$.  This is associative, and the resulting semigroup $S^a=(S,{\star})$ is the \emph{variant} of $S$ with respect to $a$.  

The structure of a variant can be much more complex than that of the original semigroup.  
For example, consider the \emph{full transformation monoid}~$\T_4$, which consists of all self maps of $\{1,2,3,4\}$ under composition.  Figure \ref{fig:others} (left) shows the egg-box diagram of $\T_4$, while Figure \ref{fig:T4a} shows the variant~$\T_4^a$, where $a= \trans{1&2&3&4\\ 1&2&3&3}$.  (An egg-box diagram is a standard semigroup-theoretic visualisation tool; see for example \cite{CP1967}.)  
As these figures indicate, $\T_4$ has a chain of ideals, whereas $\T_4^a$ has an intricate ideal structure.

\newsavebox{\transa}
\savebox{\transa}{$\left(\begin{smallmatrix}1&2&3&4\\1&2&3&3\end{smallmatrix}\right)$}

\begin{figure}[t]
\begin{center}
\includegraphics[width=0.95\textwidth]{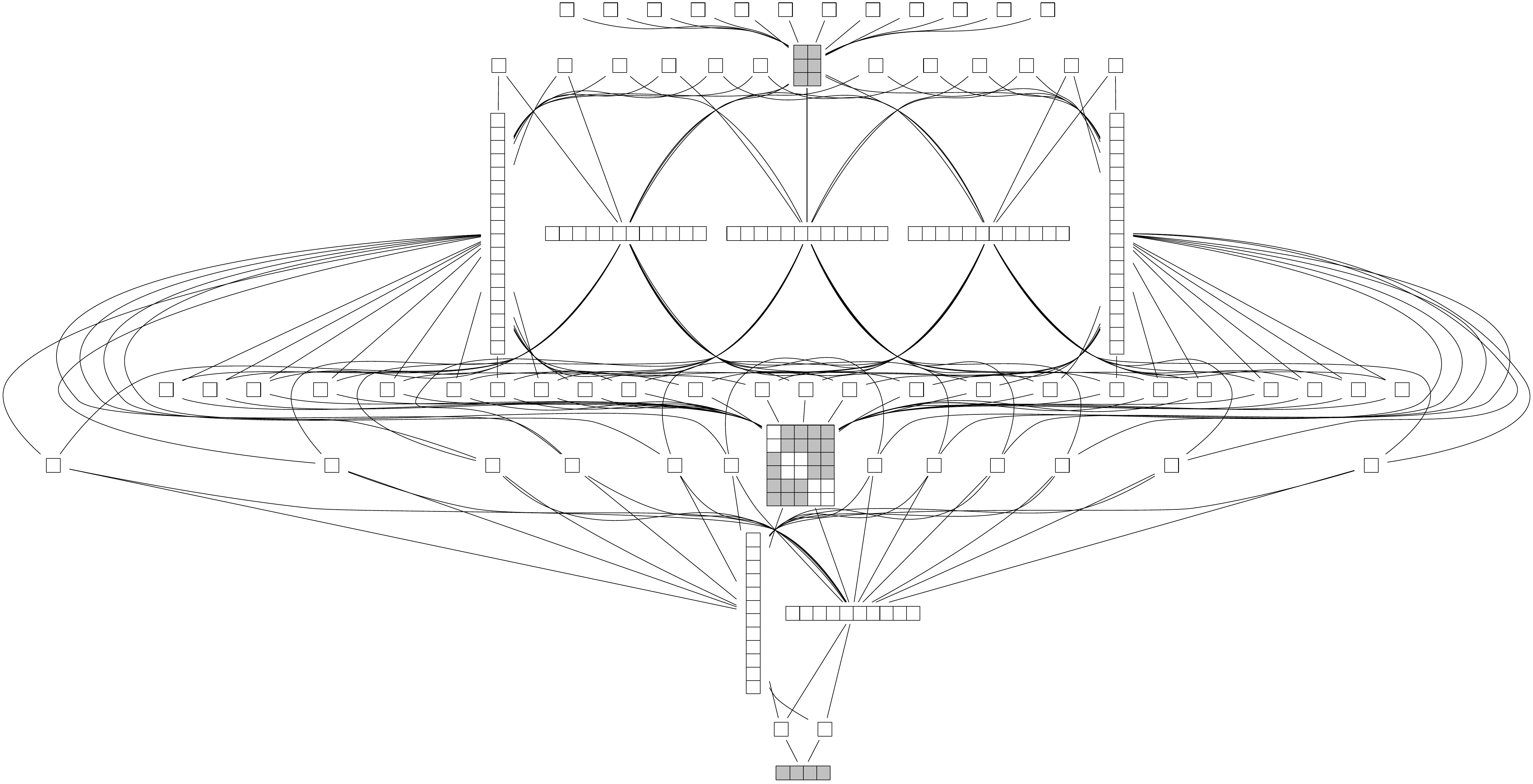}
\caption{Egg-box diagram of $\T_4^a$, where $a=\usebox{\transa}$.
}
\label{fig:T4a}
\end{center}
\end{figure}

\begin{figure}[t]
\begin{center}
\scalebox{0.9}{
\includegraphics[height=7.2cm]{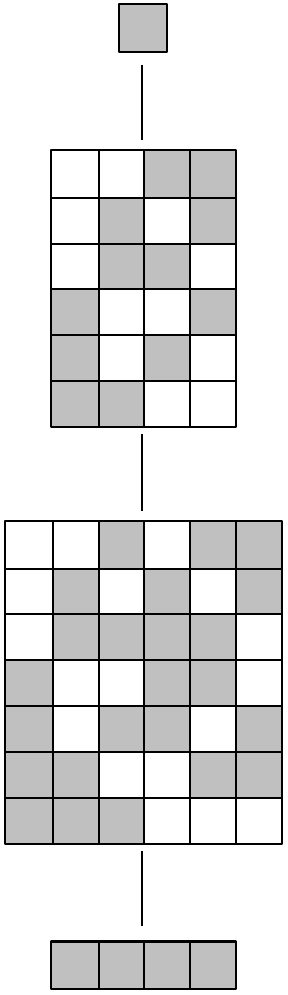}\qquad\qquad
\includegraphics[height=5cm]{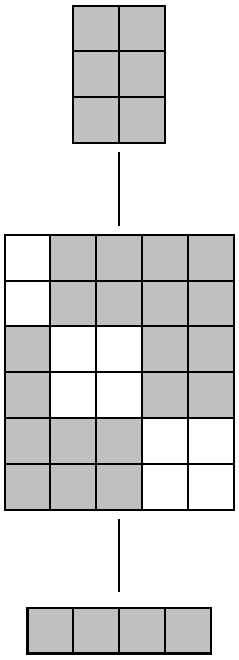}\qquad\qquad
\includegraphics[height=3.3cm]{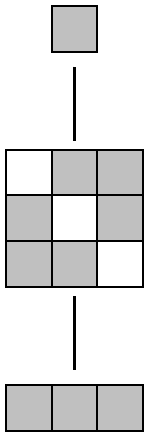}
}
\caption{Left to right: egg-box diagrams of $\T_4$, $\Reg(\T_4^a)$ and $\T_3$, where 
$a=\usebox{\transa}$.
}
\label{fig:others}
\end{center}
\end{figure}

Since~$\T_4$ is regular, it follows from \cite[Proposition 5]{KL2001} that the set $\Reg(\T_4^a)$ of regular elements of~$\T_4^a$ is a subsemigroup.  The egg-box diagram of $\Reg(\T_4^a)$ is shown in Figure \ref{fig:others} (middle), from which we can see that its ideals once again form a chain.  Less obvious, but still visible in the diagram, is that $\Reg(\T_4^a)$ is some kind of `inflation' of the (ordinary) full transformation semigroup~$\T_3$, which is pictured in Figure \ref{fig:others} (right).  Note that $\T_3$ appears here because the sandwich element $a\in\T_4$ has rank (image size) $3$.  This phenomenon was explored at length in the paper \cite{DE2015}, which systematically studied variants of finite full transformation semigroups and their regular subsemigroups.  The `inflation' was explained there in terms of certain `hat relations' extending Green's equivalences, and a natural surmorphism $\Reg(\T_4^a)\to\T_3$.

One of the striking consequences of Mal'cev's classification \cite{Malcev1952} is that the congruences of a finite full transformation monoid $\T_n$ form a chain.  
As explained in \cite{ER2023}, this is (very roughly speaking) a consequence of the normal subgroup lattices of the symmetric groups $\S_r$ ($1\leq r\leq n$) being chains, and $\T_n$ having the `separation properties' mentioned above.  
We will not need to introduce the latter here, because the variants $\Reg(\T_n^a)$ turn out not to satisfy them, and this route is not available to us.
But it is perhaps worth remarking that,
as shown in \cite{BEMMR2023}, the easiest way to verify the properties for $\T_n$  is to 
check that  its egg-box diagram has a certain combinatorial property, namely that distinct rows/columns in a non-minimal $\D$-class have distinct patterns of group and non-group $\H$-classes, as represented in egg-box diagrams by grey and white cells, respectively.  Examining Figure~\ref{fig:others}, one can check that this is indeed the case for~$\T_4$, but clearly not for~$\Reg(\T_4^a)$.

Since the general techniques developed in \cite{ER2023} do not apply to the finite regular variants $\Reg(\T_n^a)$, a new approach is required.  
Furthermore, there is no reason to expect that the congruence lattice~$\Cong(\Reg(\T_n^a))$ should be a chain,
and this can be verified computationally.  For example, the Semigroups package for GAP \cite{GAP,Semigroups} shows that the congruences of $\Reg(\T_4^a)$, with $a$ as above, form the lattice shown in Figure \ref{fig:lattice0}.  There are $271$ congruences, and the lattice is clearly not a chain; by contrast, the lattices $\Cong(\T_3)$ and $\Cong(\T_4)$ are chains of length~$7$ and~$11$, respectively.  Nevertheless, certain structural features of the lattice $\Cong(\Reg(\T_4^a))$ are visible in the diagram.  Indeed, the kernel~$\ka$ of the above surmorphism $\Reg(\T_4^a)\to\T_3$ corresponds in Figure~\ref{fig:lattice0} to the solid red vertex, and hence one can see the interval $[\De,\ka]$, as all vertices between it and  the solid blue vertex which represents the trivial congruence $\Delta$.  
There are a number of further intervals in $\Cong(\Reg(\T_4^a))$, isomorphic to subintervals of $[\Delta,\kappa]$, which are bounded by pairs of hollow red and blue vertices, and the entire lattice is a disjoint union of these intervals.

The preceeding observation is formalised in the first part of our main result, Theorem \ref{thm:main}\ref{it:main1}, which identifies the congruence lattice of a finite regular variant $\Reg(\T_X^a)$ as a specific subdirect product of $\Cong(\T_r)$ and~$[\De,\ka]$, where $r=\rank(a)$ and $\ka$ is the kernel of an analogous surmorphism ${\Reg(\T_X^a)\to\T_r}$.  The lattice $\Cong(\T_r)$ is well understood, thanks to Mal'cev \cite{Malcev1952}, and the remaining parts of Theorem~\ref{thm:main} describe the structure of the interval $[\De,\ka]$.  First, we have the direct product decomposition ${[\De,\ka] = [\De,\lam] \times [\De,\rho]}$, for certain congruences $\lam,\rho\sub\ka$ (Theorem \ref{thm:main}\ref{it:main2}).  Ultimately, the intervals $[\De,\lam]$ and $[\De,\rho]$ are shown to be subdirect products of families of full equivalence relation lattices over natural combinatorial systems of subsets and partitions (Theorem \ref{thm:main}\ref{it:main3} and \ref{it:main4}).  

The paper is organised as follows.  After giving preliminaries in Section \ref{sect:prelim}, we state our main result in Section \ref{sec:statement}.  We then pause to record some auxiliary lemmas in Section \ref{sec:auxi}, before giving the proofs of the various parts of the main result in Sections \ref{sect:join}--\ref{sec:RL}.  The information gathered during this process will then be combined in Section \ref{sect:class} to give a classification of the congruences themselves.  As an application of our structure theorem, we give a formula for the height of the congruence lattice in Section \ref{sect:height}.  The paper concludes in Section \ref{sect:conc} with a discussion of directions for future work.

\subsection*{Acknowledgements}

This work is supported by the following grants:
%\bit
%\item 
F-121 of the Serbian Academy of Sciences and Arts;
%\item 
Future Fellowship FT190100632 of the Australian Research Council;
%\item 
EP/S020616/1 and EP/V003224/1 of the Engineering and Physical Sciences Research Council.
%\eit
The first author is also partially supported by the Ministry of Science, Technological Development, and Innovations of the Republic of Serbia.

\begin{figure}[t]
\begin{center}
\scalebox{0.8}{
\begin{tikzpicture}
\node[above right]  () at (0,0) {\includegraphics[width=\textwidth]{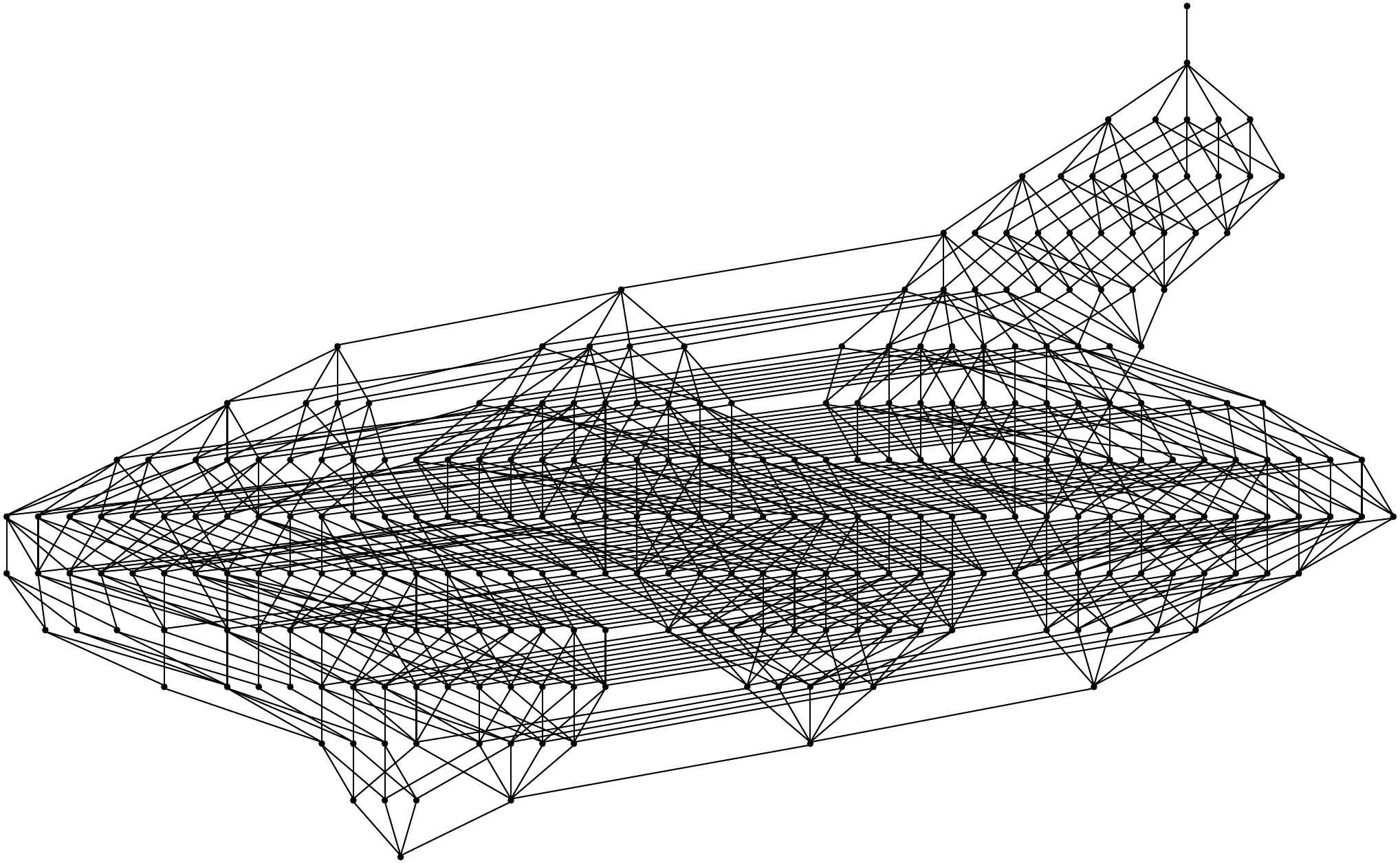}};
%\draw[help lines] (0,0) grid (18,10);
\foreach \x/\y in {
%5/0.22,
9.98/1.6,
13.42/2.29,
14/6.4,
14.28/7.1,
15.04/7.8
} {\fill [blue] (\x,\y)circle(.1); \fill [white] (\x,\y)circle(.05);}
%} {\draw [blue] (\x,\y)circle(.1); }
\foreach \x/\y in {
5/0.22
} {\fill [blue] (\x,\y)circle(.1);}
\foreach \x/\y in {
%4.23/6.41,
7.68/7.1,
11.6/7.79,
12.55/8.49,
13.6/9.16,
14.56/9.85,
14.56/10.54
} {\fill [red] (\x,\y)circle(.1); \fill [white] (\x,\y)circle(.05);}
%} {\draw [red] (\x,\y)circle(.1); }
\foreach \x/\y in {
4.23/6.41
} {\fill [red] (\x,\y)circle(.1);}
\end{tikzpicture}
}
\caption{The congruence lattice of $\Reg(\T_4^a)$, where $a=\usebox{\transa}$; cf.~Figures \ref{fig:lattice} and \ref{fig:lattice1}.}
\label{fig:lattice0}
\end{center}
\end{figure}

\section{Preliminaries}\label{sect:prelim}

In this section we establish notation and gather some basic background facts concerning semigroups.  Unless otherwise indicated, proofs of the various assertions can be found in a standard text such as \cite{CP1967} or \cite{Howie1995}.  We also review some results concerning congruences from \cite{EMRT2018,ER2023} and variants of finite full transformation semigroups from \cite{DE2015}; see also \cite[Chapter 13]{GMbook}.

\subsection{Green's relations}

Let $S$ be a semigroup.  We write $S^1$ for the \emph{monoid completion} of $S$.  Specifically, $S^1=S$ if $S$ happens to be a monoid; otherwise $S^1=S\cup\{1\}$, where $1$ is a symbol not belonging to $S$, acting as an adjoined identity element.  Define preorders $\leqL$, $\leqR$ and $\leqJ$, for $x,y\in S$, by
\[
x\leqL y \iff x\in S^1y \COMMA x\leqR y\iff x\in yS^1 \AND x\J y \iff x\in S^1yS^1.
\]
These induce equivalences ${\L} = {\leqL}\cap{\geqL}$, ${\R} = {\leqR}\cap{\geqR}$ and ${\J} = {\leqJ}\cap{\geqJ}$.  Note that ${x\L y \iff S^1x=S^1y}$, with similar statements holding for $\R$ and $\J$.  We also have the equivalences ${\H}={\L}\cap{\R}$ and ${\D}={\L}\vee{\R}$, where the latter denotes the join of $\L$ and $\R$ in the lattice~$\Eq(S)$ of all equivalences on $S$, i.e.~$\D$ is the transitive closure of the union ${\L}\cup{\R}$.   It turns out that in fact ${\D}={\L}\circ{\R}={\R}\circ{\L}$.  If $S$ is finite, then ${\D}={\J}$.  The $\H$-class of any idempotent is a group; all group $\H$-classes contained in a common $\D$-class are isomorphic.

If $\K$ denotes any of $\L$, $\R$, $\J$, $\H$ or $\D$, we denote by $K_x$ the $\K$-class of $x$ in $S$.  The set $S/{\J}=\set{J_x}{x\in S}$ of all $\J$-classes is partially ordered by
\begin{equation}\label{eq:leqK}
J_x \leq J_y \iff x\leqJ y \qquad\text{for $x,y\in S$.}
\end{equation}
The above relations are collectively referred to as \emph{Green's relations}, and were introduced in \cite{Green1951}.  The next two results are well known, and appear for example as 
Lemma 2.2.1 and Proposition 2.3.7 in \cite{Howie1995}.

\begin{lemma}[Green's Lemma]\label{lem:GL}
Let $x$ and $y$ be $\R$-related elements of a semigroup $S$, so that $y=xs$ and $x=yt$ for some $s,t\in S^1$.  Then the maps
\[
L_x\to L_y:u\mt us \AND L_y\to L_x:v\mt vt
\]
are mutually inverse $\R$-preserving bijections.  Moreover, these restrict to mutually inverse bijections
\[
H_x\to H_y:u\mt us \AND H_y\to H_x:v\mt vt.  \epfreseq
\]
\end{lemma}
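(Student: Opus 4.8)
The plan is to prove both assertions by direct manipulation of the witnessing elements $s,t\in S^1$, using only the characterisation $x\L y\iff S^1x=S^1y$ (and its right-handed analogue). Everything rests on one key observation: \emph{for every $u\in L_x$ one has $u\R us$, and symmetrically, for every $v\in L_y$ one has $v\R vt$.} To prove the first part, note that $y=xs$ and $x=yt$ give $x=xst$; since $u\L x$ we may write $u=ax$ with $a\in S^1$, and then $ust=a(xst)=ax=u$, so $u\in usS^1$, while trivially $us\in uS^1$; hence $u\R us$. The second part is proved identically, using $y=yts$.

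Granting this, I would first verify that the map $\rho_s\colon u\mapsto us$ sends $L_x$ into $L_y$, that $\rho_t\colon v\mapsto vt$ sends $L_y$ into $L_x$, and that the two are mutually inverse. For $u\in L_x$, write $u=ax$ and $x=bu$ with $a,b\in S^1$ (possible since $u\L x$); then $us=ay\in S^1y$ and $y=xs=b(us)\in S^1(us)$, so $us\L y$; the statement for $\rho_t$ is symmetric. For the composites, applying $\rho_s$ and then $\rho_t$ to $u\in L_x$ yields $ust=a(xst)=ax=u$, and dually applying $\rho_t$ then $\rho_s$ to $v\in L_y$ returns $v$; hence $\rho_s$ and $\rho_t$ are mutually inverse bijections between $L_x$ and $L_y$.

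For $\R$-preservation the key observation does the work: if $u,u'\in L_x$ with $u\R u'$, then $us\R u\R u'\R u's$, so $us\R u's$; conversely, if $us\R u's$ with $u,u'\in L_x$, then applying the key observation inside $L_y$ gives $u=(us)t\R us\R u's\R(u's)t=u'$. Thus $u\R u'\iff us\R u's$ for $u,u'\in L_x$. The $\H$-class statement then follows: if $u\in H_x=L_x\cap R_x$, then $us\in L_y$ as above, while $us\R u\R x\R y$ (using the key observation and the hypothesis $x\R y$), so $us\in R_y$ and hence $us\in H_y$; symmetrically $\rho_t$ maps $H_y$ into $H_x$, and since $\rho_s,\rho_t$ are already mutually inverse on the ambient $\L$-classes, they restrict to mutually inverse bijections between $H_x$ and $H_y$.

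I do not expect any genuine obstacle here, as this is a classical and elementary fact; the real content is just the identity $x=xst$ (and its mirror image) together with the key observation. The only thing to watch is the bookkeeping with the monoid completion: each step of the form ``write $u=ax$'' must be justified by $u\L x\iff S^1u=S^1x$ rather than by $u\in Sx$, and the elements $a,b,s,t$ may themselves be the adjoined identity; but none of this affects the computations above.
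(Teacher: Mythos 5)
Your proof is correct. The paper states this lemma without proof, citing it as Lemma 2.2.1 of Howie's textbook, and your argument is exactly the standard one found there: the identity $x=xst$ (and its mirror $y=yts$), the key observation that $u\R us$ for every $u\in L_x$, and the verification that $u\mt us$ and $v\mt vt$ are mutually inverse between $L_x$ and $L_y$ and restrict to the $\H$-classes; the bookkeeping with $S^1$ is also handled properly.
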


Lemma \ref{lem:GL} has a left-right dual, which will also be referred to as Green's Lemma.

\begin{lemma}\label{lem:237}
If $x$ and $y$ are elements of a semigroup $S$, then $xy\in R_x\cap L_y$ if and only if $L_x\cap R_y$ contains an idempotent.  \epfres
\end{lemma}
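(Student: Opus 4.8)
The plan is to prove the two implications separately and directly, in each case by writing down an explicit witness and checking it by hand; the argument is entirely elementary (no finiteness, regularity or stability is used), so the only real ``obstacle'' is guessing the right element. Throughout I use that $x\leqR y$ means $x\in yS^1$ and $x\leqL y$ means $x\in S^1y$, so that $x\R y$ iff $xS^1=yS^1$ and $x\L y$ iff $S^1x=S^1y$.

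I would start with the reverse (``if'') implication, which is the softer one. Suppose $L_x\cap R_y$ contains an idempotent $e$. From $e\L x$ we have $x\in S^1e$ and $e\in S^1x$, and from $e\R y$ we have $y\in eS^1$ and $e\in yS^1$. Using $e^2=e$, a one-line computation gives $xe=x$ and $ey=y$. Then $x=xe$ together with $e\in yS^1$ yields $x\in xyS^1$, i.e.\ $x\leqR xy$; since $xy\leqR x$ always, we get $xy\R x$. Symmetrically, $y=ey$ together with $e\in S^1x$ yields $y\in S^1xy$, hence $xy\L y$. So $xy\in R_x\cap L_y$.

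For the forward (``only if'') implication, suppose $xy\in R_x\cap L_y$. Then $x\leqR xy$ and $y\leqL xy$, so there exist $p,q\in S^1$ with $x=xyp$ and $y=qxy$. The step I expect to be the crux is recognising that $e:=yp$ is the idempotent we want, and in particular that it also equals $qx$: substituting $y=qxy$ and then using $xyp=x$ gives $yp=(qxy)p=q(xyp)=qx$, using each defining relation exactly once. Granting this, the rest is routine: the same substitutions give $ypy=q(xyp)y=qxy=y$, so $e^2=(yp)(yp)=(ypy)p=yp=e$; then $e=yp\in yS^1$ and $y=ey\in eS^1$ give $e\R y$, while $e=qx\in S^1x$ and $x=xyp=xe\in S^1e$ give $e\L x$. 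Hence $e$ is an idempotent in $L_x\cap R_y$.

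The only point requiring care is the bookkeeping with $S^1$, so that the multipliers $p,q$ (and their analogues in the reverse direction) are permitted to be the adjoined identity. One could instead repackage the forward direction via Green's Lemma (Lemma~\ref{lem:GL}), transporting a suitable idempotent along the induced bijections between $\H$-classes, but the direct computation above seems shortest.
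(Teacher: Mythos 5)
Your proof is correct. Note, though, that the paper does not actually prove this lemma: it is quoted verbatim from the literature (Proposition 2.3.7 of Howie's book), which is why it carries a QED box with no argument attached. So there is nothing in the paper to compare against line by line; what you have done is reconstruct a complete, self-contained proof of the cited fact. Both directions check out: in the ``if'' direction the identities $xe=x$ and $ey=y$ follow from $e^2=e$ together with $x\in S^1e$ and $y\in eS^1$, and then $e\in yS^1$ and $e\in S^1x$ give $x\in xyS^1$ and $y\in S^1xy$ as you say; in the ``only if'' direction your witness $e=yp=qx$ is exactly right, and the computations $ypy=y$ and $e^2=e$ are valid with $p,q\in S^1$ (the adjoined-identity bookkeeping causes no trouble, since the degenerate cases $p=1$ or $q=1$ only make the identities easier). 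The standard textbook proof of the forward direction instead transports the idempotent of $H_{xy}\cap L_y$... more precisely, it applies Green's Lemma to the $\R$-preserving bijection $L_x\to L_{xy}=L_y$ given by right multiplication by $y$, locating a preimage $e$ of $y$ in $L_x\cap R_y$ and then checking $e^2=e$; your explicit-witness computation is an equivalent but more elementary route, exactly as you anticipate in your closing remark.
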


An element $x\in S$ is \emph{regular} if $x\in xSx$.  We denote by $\Reg(S)$ the set of all regular elements, but note that this need not be a subsemigroup.  If $x$ is regular, then $D_x\sub\Reg(S)$.  We say that $S$ itself is \emph{regular} if $S=\Reg(S)$.  If $x$ is regular, then there exist idempotents $e,f\in E(S)$ with $e\L x\R f$, and we then have $xe=x=fx$.  Here as usual $E(S) = \set{e\in S}{e=e^2}$ denotes the set of all idempotents of $S$.

A $\J$-class $J$ of $S$ is \emph{stable} if
\[
x\J ax \implies x\L ax \AND x\J xa\implies x\R xa \qquad\text{for all $x\in J$ and $a\in S$.}
\]
A stable $\J$-class is in fact a $\D$-class \cite[Proposition 2.3.9]{Lallement1979}.  We say that $S$ itself is \emph{stable} if every $\J$-class is stable.  All finite semigroups are stable \cite[Theorem A.2.4]{RS2009}.

\subsection{Congruences}

An equivalence $\si$ on a semigroup $S$ is a \emph{left congruence} if it is \emph{left compatible}, meaning that
\[
(x,y)\in\si \implies (ax,ay)\in\si \qquad\text{for all $a,x,y\in S$.}
\]
\emph{Right compatibility} and \emph{right congruences} are defined dually.  
Note for example that $\L$ is a right congruence, and $\R$ a left congruence.  
An equivalence $\sigma$ on $S$ is a \emph{congruence} if it is both left and right compatible, which is equivalent to $\sigma$ satisfying
\[
(a,b),(x,y)\in\si \implies (ax,by)\in\si \qquad\text{for all $a,b,x,y\in S$.}
\]
The set $\Cong(S)$ of all congruences of $S$ is a lattice under inclusion, called the \emph{congruence lattice} of~$S$, and is a sublattice of $\Eq(S)$.  In particular, the meet and join of congruences $\si,\tau\in\Cong(S)$ are the same as in $\Eq(S)$, so $\si\wedge\tau=\si\cap\tau$, while $\si\vee\tau$ is the least equivalence containing $\si\cup\tau$.  The bottom and top elements of $\Cong(S)$ are the trivial and universal relations:
\[
\De_S = \set{(x,x)}{x\in X} \AND \nab_S = S\times S.
\]

A (possibly empty) subset $I\sub S$ is an \emph{ideal} if $SI\cup IS\sub I$.  Any such $I$ determines the \emph{Rees congruence}
\[
R_I = \nab_I \cup \De_S = \set{(x,y)\in S\times S}{x=y \text{ or } x,y\in I}.
\]
Note that $R_\es=\De_S$ and $R_S=\nab_S$.

Ideals can be combined with group $\H$-classes to create another family of congruences as follows.  Let $I$ be an ideal of $S$.  As $I$ is a union of $\J$-classes, so too is $S\sm I$.  Suppose $J$ is a $\J$-class that is minimal in the poset $(S\sm I)/{\J}$ under the $\leq$ order defined in \eqref{eq:leqK}.  Suppose also that $J$ is regular and stable, so that in fact $J$ is a $\D$-class.  Let $G$ be a group $\H$-class contained in $J$, and let $N\normal G$ be a normal subgroup.  The relation
\[
\nu_N = S^1(N\times N)S^1 \cap (J\times J) = \bigset{(axb,ayb)}{x,y\in N,\ a,b\in S^1,\ axb,ayb\in J}
\]
is an equivalence on $J$, and $\nu_N\sub{\H}$ \cite[Lemma 3.17]{EMRT2018}.  Moreover, the relation
\begin{equation}\label{eq:RIN}
R_{I,N} = \nab_I \cup \nu_N \cup \De_S
\end{equation}
is a congruence of $S$ \cite[Proposition 3.23]{EMRT2018}.  
As explained in \cite[Remark 2.11]{ER2023}, the set of congruences $\set{R_{I,N}}{N\normal G}$ forms a sublattice of $\Cong(S)$ isomorphic to the normal subgroup lattice of $G$.  In the case that $N=\{1\}$ is the trivial (normal) subgroup, $R_{I,N}$ is just the Rees congruence $R_I$.
It was shown in \cite[Lemma 3.16]{EMRT2018} that the $\nu_N$ relations are independent of the choice of group $\H$-class, in the sense that for any two such groups $G_1,G_2\sub J$, and for any normal subgroup $N_1\normal G_1$, we have $\nu_{N_1}=\nu_{N_2}$ for some $N_2\normal G_2$.

\begin{lemma}\label{lem:28}
Let $D$ be a stable regular $\J$-class of a semigroup $S$ (so that $D$ is in fact a $\D$-class), and let $\si\in\Cong(S)$.  Fix a group $\H$-class $G\sub D$, and let $e$ be the identity of $G$.  Then
\[
\si \cap {\H}\restr_D = \nu_N \WHERE N = \set{g\in G}{e\mr\si g}.
\]
\end{lemma}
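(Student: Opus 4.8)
The plan is to prove the two inclusions $\si\cap{\H}\restr_D\sub\nu_N$ and $\nu_N\sub\si\cap{\H}\restr_D$ separately, exploiting Green's Lemma (Lemma~\ref{lem:GL}) to move arbitrary $\H$-related pairs into the fixed group $\H$-class $G$. Throughout I would use that $D$ is a stable regular $\D$-class, so every $\H$-class in $D$ has the same size as $G$, and any two $\R$-related (resp.~$\L$-related) $\H$-classes in $D$ are matched by the mutually inverse bijections of Green's Lemma, which moreover preserve $\si$ since $\si$ is a congruence.

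\textbf{Step 1: the easy inclusion $\nu_N\sub\si$.} Take $(p,q)\in\nu_N$, so $p=axb$ and $q=ayb$ with $x,y\in N$, $a,b\in S^1$, and $p,q\in D$. Since $e\mr\si g$ for all $g\in N$ by definition of $N$, and $e=e^2$ acts as a two-sided identity on $G\supseteq N$, we have $x=ex\mr\si\cdots$; more directly, $x,y\in N$ gives $x\mr\si e\mr\si y$, hence $(x,y)\in\si$, and then left/right compatibility yields $(axb,ayb)\in\si$. Combined with $\nu_N\sub{\H}\restr_D$ (quoted from \cite[Lemma 3.17]{EMRT2018}), this gives $\nu_N\sub\si\cap{\H}\restr_D$.

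\textbf{Step 2: the reverse inclusion $\si\cap{\H}\restr_D\sub\nu_N$.} Let $(p,q)\in\si$ with $p\H q$ and $p,q\in D$. Pick an idempotent $f\L p$ in $D$ and an idempotent $f'\R p$ in $D$; since $D$ is regular these exist. Using Green's Lemma along an $\R$-chain and then an $\L$-chain, there are elements $s,t\in S^1$ so that $u\mt sut$ (suitably) carries $H_p=H_q$ bijectively onto $G$, preserving the $\si$-class structure because $\si$ is a congruence. Writing $g_1,g_2\in G$ for the images of $p,q$, we get $(g_1,g_2)\in\si$, and then $(e g_1^{-1}\cdot g_1,\,e g_1^{-1}\cdot g_2)=(e,\,g_1^{-1}g_2)\in\si$ by left compatibility with $eg_1^{-1}\in G$ (valid since $G$ is a group). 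Hence $g_1^{-1}g_2\in N$, and so $g_2\in g_1 N$, i.e.~$(g_1,g_2)\in\nu_N$ by the defining description of $\nu_N$ restricted to $G$. Pulling back along the inverse Green's Lemma bijection (again $\si$- and $\H$-preserving) returns $(p,q)\in\nu_N$. A slightly cleaner route is to observe directly that $\nu_N$ is exactly the congruence-class relation on $D$ coming from $\si$: the independence-of-$\H$-class result \cite[Lemma 3.16]{EMRT2018} guarantees that whichever group $\H$-class one lands in, the resulting normal subgroup is a translate of $N$, so the computation in $G$ suffices.

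\textbf{Main obstacle.} The delicate point is bookkeeping the Green's Lemma translations so that the bijection $H_p\to G$ genuinely respects $\si$ \emph{and} lands inside a single $\H$-class where the group structure (inverses, the identity $e$) is available; one must check that the composite of the $\R$-direction and $\L$-direction bijections of Lemma~\ref{lem:GL} does map $H_p$ onto $G$ and not merely onto some conjugate copy, and that stability is what makes $H_p$ and $G$ lie in the same $\D$-class with matching $\H$-class sizes. Once the translations are in place, reducing everything to the identity $e$ via multiplication by $g_1^{-1}$ inside $G$ is routine group theory, and matching it to the displayed formula for $\nu_N$ (with $a,b$ ranging over $S^1$) is immediate. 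I would also note in passing that the statement presupposes $D$ is a $\D$-class, which is exactly the content of stability plus \cite[Proposition 2.3.9]{Lallement1979}, so no separate argument is needed there.
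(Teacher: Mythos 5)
Your argument is correct, but it takes a genuinely different route from the paper's. The paper proves this lemma by reduction: it forms the auxiliary congruence $\tau=\si\cap R_{I,G}$, where $I$ is the ideal of elements strictly below $D$, observes that $\tau\restr_D=\si\cap{\H}\restr_D\sub{\H}$, invokes \cite[Lemma 2.8]{ER2023} as a black box to conclude $\tau\restr_D=\nu_{N'}$ with $N'=\set{g\in G}{e\mr\tau g}$, and finishes with a one-line check that $N'=N$. You instead prove both inclusions from first principles: $\nu_N\sub\si\cap{\H}\restr_D$ via compatibility of $\si$ together with $\nu_N\sub{\H}\restr_D$, and the reverse inclusion by transporting a pair of $\si\cap{\H}\restr_D$ into $G$ by Green's Lemma translations $u\mt sut$ (which send $\si$-pairs to $\si$-pairs), normalising by $g_1^{-1}$ inside the group $G$, and pulling back. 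The steps you flag as delicate do all go through: the composite translation lands exactly in $G$ (move along $R_p$ into $R_p\cap L_e$, then along $L_e$ into $H_e=G$), and the pullback preserves membership of $\nu_N$ because $(p,q)=(s'g_1t',s'g_2t')=\bigl((s'g_1)\,e\,t',\ (s'g_1)(g_1^{-1}g_2)\,t'\bigr)\in S^1(N\times N)S^1\cap(D\times D)$. In effect you are reproving the cited \cite[Lemma 2.8]{ER2023} in the special case needed here; your version buys self-containedness at the cost of the translation bookkeeping, while the paper's buys brevity by outsourcing exactly that bookkeeping to the earlier reference.
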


\pf
Let $I$ be the union of all the $\J$-classes below $D$, so that $I$ is a (possibly empty) ideal of $S$, and consider the congruence $\tau = \si\cap R_{I,G}\in\Cong(S)$.  Since
\[
R_{I,G} = \nab_I \cup \nu_G \cup \De_S = \nab_I \cup {\H}\restr_D \cup \De_S,
\]
we have
\begin{equation}\label{eq:tau}
\tau = \si\restr_I \cup (\si\cap{\H}\restr_D) \cup \De_S.
\end{equation}
In particular, $\tau\restr_D = \si\cap{\H}\restr_D \sub {\H}$, so it follows from \cite[Lemma 2.8]{ER2023} that
\[
\tau\restr_D = \nu_{N'}, \WHERE N' = \set{g\in G}{e\mr\tau g}.
\]
We have already observed that $\tau\restr_D = \si\cap{\H}\restr_D$, so it follows that $\si\cap{\H}\restr_D = \nu_{N'}$.  To complete the proof, it remains to show that $N'=N$, i.e.~that $(e,g)\in\tau \iff (e,g)\in\si$ for all $g\in G$.  But for any such $g$, we have
\begin{align*}
(e,g)\in\tau &\iff (e,g)\in\si\cap{\H}\restr_D &&\text{by \eqref{eq:tau}, as $e,g\in D$}\\
&\iff (e,g)\in\si &&\text{since $(e,g)\in{\H}$ (as $e,g\in G$).}  \qedhere
\end{align*}
\epf

\subsection{Finite transformation semigroups and their congruences}

Fix a finite set $X$ of size $n$, and let $\T_X$ be the \emph{full transformation semigroup} over $X$, i.e.~the semigroup of all mappings $X\to X$ under composition.  Green's preorders on $\T_X$ are determined by images, kernels and ranks.  These parameters are defined, for $f\in\T_X$, by
\[
\im(f) = \set{xf}{x\in X},\quad
\ker(f) = \set{(x,y)\in X\times X}{xf=yf} \ANd
\rank(f) = |{\im(f)}| = |X/\ker(f)|.
\]
For $f,g\in\T_X$ we have
\begin{align*}
%\nonumber
f\leqL g&\iff\im(f)\sub\im(g),& f\leqR g &\iff \ker(f)\supseteq\ker(g),& f\leqJ g&\iff \rank(f)\leq\rank(g),\\
%\label{eq:LRJ}
f\L g&\iff\im(f)=\im(g),& f\R g &\iff \ker(f)=\ker(g),& f\J g&\iff \rank(f)=\rank(g).
\end{align*}
The ${\J}={\D}$-classes and non-empty ideals of $\T_X$ are the sets
\[
D_r=\set{f\in\T_X}{\rank(f)=r} \AND I_r=\set{f\in\T_X}{\rank(f)\leq r} \qquad\text{for $1\leq r\leq n$,}
\]
and they form chains, $D_1<\cdots<D_n$ and $I_1\subset\cdots\subset I_n=\T_n$.  The top $\D$-class $D_n$ is equal to the symmetric group~$\S_X$.  Group $\H$-classes in $D_r$ are isomorphic to $\S_r$.  In particular, we can identify $\S_r$ with any such group $\H$-class, and we can then speak of the congruences $R_{I_{r-1},N}$, for any $1\leq r\leq n$, and any $N\normal\S_r$.  For $r=1$ we interpret $I_0=\es$; as $\S_1$ is the trivial group, the only such congruence arising for $r=1$ is $R_{I_0,\S_1} = \De_{\T_X}$.  The following major result by Mal'cev initiated research into congruence lattices of important concrete semigroups, and is one of the key ingredients on which the present work is built:

\begin{thm}[Mal'cev \cite{Malcev1952}]\label{thm:Malcev}
If $X$ is a finite set of size $n$, then the congruence lattice of $\T_X$ is a chain:
\[
\Cong(\T_X) = \{\nab_{\T_X}\} \cup \set{R_{I_{r-1},N}}{1\leq r\leq n,\ N\normal\S_r}.  \epfreseq
\]
\end{thm}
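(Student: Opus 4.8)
The plan is to prove Mal'cev's theorem by a three-stage argument: first establish that any congruence is "layered" by ranks, then identify what a congruence can do on each $\D$-class $D_r$, and finally show that these local behaviours force a chain. First I would take an arbitrary $\sigma\in\Cong(\T_X)$ and set $I = \bigcup\set{D_r}{D_r \text{ contains a non-trivial $\sigma$-class}}$, or more carefully, let $r_0$ be the largest rank $r$ such that $\sigma$ identifies two distinct elements of rank $\geq r$ meeting $D_r$; the key structural claim will be that below $r_0$ the congruence $\sigma$ collapses everything (contains $\nabla_{I_{r_0-1}}$), and on $D_{r_0}$ it restricts to one of the $\nu_N$ relations. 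For the "collapse below" part, the main tool is that identifying two elements of rank $r$ with the same image but different kernels (or vice versa), together with multiplying on the left and right by suitable transformations, produces an identification of two elements of strictly smaller rank; iterating and using that $\nabla_{D_s}\cup\Delta$ generates $\nabla_{I_s}\cup\Delta$ as a congruence, one gets $\nabla_{I_{r_0-1}}\subseteq\sigma$.

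Next I would pin down $\sigma\restr_{D_{r_0}}$. Since $D_{r_0}$ is a stable regular $\J$-class (all of $\T_X$ is regular, and finite semigroups are stable), and since by the choice of $r_0$ the relation $\sigma$ does not identify any two $\J$-related elements of rank $> r_0$, one argues that $\sigma\restr_{D_{r_0}}\subseteq{\H}$: an identification $(f,g)\in\sigma$ with $f,g\in D_{r_0}$ but $f\not\R g$ or $f\not\L g$ would, via Green's Lemma and suitable multipliers, either drop rank (contradicting the non-collapse below $r_0$ being everything — actually consistent — so one must instead derive an identification within a higher $\D$-class) or contradict maximality of $r_0$. The cleanest route is: if $(f,g)\in\sigma$ with $\im(f)\neq\im(g)$, choose $h$ of rank $r_0$ with $hf, hg$ still rank $r_0$ but arranged so that $fh', gh'$ have different ranks, forcing a rank-$>r_0$ identification or a contradiction; the precise juggling here is the fiddly part. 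Once $\sigma\restr_{D_{r_0}}\subseteq{\H}$, Lemma~\ref{lem:28} applies directly and gives $\sigma\restr_{D_{r_0}} = \nu_N$ for $N = \set{g\in \S_{r_0}}{e\mr\sigma g}$, a normal subgroup of $\S_{r_0}$.

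Finally I would assemble these pieces. Above $D_{r_0}$ — i.e. on $I_n\sm I_{r_0}$ — the congruence $\sigma$ identifies only $\H$-related, indeed only $\nu_{N'}$-equivalent, elements on each $D_r$ with $r > r_0$, but by the choice of $r_0$ it identifies no two distinct such elements at all, so $\sigma\restr_{D_r}=\Delta$ for $r > r_0$; combined with $\nabla_{I_{r_0-1}}\subseteq\sigma$ and $\sigma\restr_{D_{r_0}}=\nu_N$, this says exactly $\sigma = R_{I_{r_0-1},N}$. The degenerate cases ($\sigma=\Delta$, corresponding to $r_0$ not existing or $N=\S_1$ with $I_0=\es$; and $\sigma=\nabla_{\T_X}$, where the collapse reaches the top $\D$-class $\S_X$ and there is no "$D_{r_0}$ above") need to be handled separately but are routine. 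To see the list is a chain, one orders $R_{I_{r-1},N} \subseteq R_{I_{r'-1},N'}$ iff $r < r'$, or $r = r'$ and $N \le N'$ in $\S_r$; since the normal subgroup lattice of $\S_r$ is itself a chain (trivial for $r\le 2$, and $\{1\}\normal A_r\normal\S_r$ for $r\ge 3$ with no other normal subgroups — $A_4$ needing its Klein four-group noted, but $V_4\normal\S_4$ gives $\{1\}\normal V_4\normal A_4\normal\S_4$, still a chain), the whole poset is a chain, capped by $\nabla_{\T_X}$.

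The main obstacle I anticipate is the middle step: the explicit transformation-theoretic manipulations showing that a single identification $(f,g)\in\sigma$ propagates — downward in rank when $f,g$ lie in the same $\D$-class but differ in image or kernel, and "horizontally then downward" to force the full Rees collapse below. These arguments are elementary but require carefully chosen multipliers in $\T_X$ and a clean induction on rank; everything else (stability, Green's Lemma, Lemma~\ref{lem:28}, the normal subgroup lattice of $\S_r$) is either quoted or standard.
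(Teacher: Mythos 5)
A preliminary remark: the paper gives no proof of this statement --- it is quoted from Mal'cev \cite{Malcev1952} with the end-of-proof marker attached to the statement itself, and the surrounding machinery ($\nu_N$, $R_{I,N}$, Lemma~\ref{lem:28}) is likewise imported from the literature --- so there is no in-paper argument to compare yours against. Your skeleton (locate the top rank $r_0$ at which $\sigma$ acts non-trivially, collapse everything below it, identify the action on $D_{r_0}$ via Lemma~\ref{lem:28}, and check the resulting list is a chain) is the standard route to Mal'cev's theorem, but the middle of your argument contains one claim that is actually false and one deferral that hides essentially all of the content.

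The false claim is the dichotomy at $D_{r_0}$. With $r_0$ the largest rank occurring in a non-trivial $\sigma$-pair, it is \emph{not} true that $\sigma\restr_{D_{r_0}}\subseteq{\H}$, hence not true that it equals some $\nu_N$: for the Rees congruence $\sigma=R_{I_s}$ one has $r_0=s$ and $\sigma\restr_{D_s}=\nabla_{D_s}\not\subseteq{\H}$. In the theorem's parametrisation this congruence is $R_{I_s,\{1\}}$ with $\{1\}\normal\S_{s+1}$, so its distinguished $\D$-class is $D_{s+1}$, not $D_{r_0}$. Your proposed repair --- ``derive an identification within a higher $\D$-class \dots or contradict maximality of $r_0$'' --- cannot be executed, because $afb\leqJ f$ for all $a,b$: multiplying a $\sigma$-pair never increases rank, so no pair above rank $r_0$ can ever be manufactured and maximality of $r_0$ is never violated this way. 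The correct resolution is not a contradiction but a case split: either every non-trivial pair meeting $D_{r_0}$ lies in ${\H}$ (then Lemma~\ref{lem:28} gives $\nu_N$ with $N\neq\{1\}$, and one must \emph{separately} prove $\nabla_{I_{r_0-1}}\subseteq\sigma$), or some pair does not, in which case one shows $\nabla_{I_{r_0}}\subseteq\sigma$ and concludes $\sigma=R_{I_{r_0}}=R_{I_{r_0},\{1\}}$, or $\nabla_{\T_X}$ when $r_0=n$.

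The deferral is the multiplier constructions themselves. Your stated mechanism for ``collapse below'' --- two rank-$r$ elements with equal images but distinct kernels (or vice versa) can be multiplied down to a pair of unequal ranks --- applies only to non-${\H}$-related pairs. In the main case $\sigma=R_{I_{r-1},N}$ with $N\neq\{1\}$, every top-rank non-trivial pair is ${\H}$-related, with equal image \emph{and} equal kernel, so that mechanism produces nothing; one needs a different construction, taking $g$ equal to $f$ twisted by a non-identity permutation of $\im(f)$ and post-multiplying by a map that merges two well-chosen image points so as to land on a non-${\H}$-related pair in $D_{r-1}$ (with case distinctions for $r=2$ and for transpositions), whence $\nabla_{I_{r-1}}\subseteq\sigma$. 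These constructions, together with the step from a rank-dropping pair to the full Rees collapse $\nabla_{I_{r_0}}\subseteq\sigma$, are the theorem; labelling them ``the fiddly part'' leaves a correct table of contents rather than a proof.
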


\subsection{Variants of finite transformation semigroups}\label{subsect:TXa}

Again we fix a finite set $X$.  We also fix a transformation $a\in\T_X$, and let ${\star}$ be the \emph{sandwich operation} on $\T_X$ defined by
\[
f\star g = fag \qquad\text{for $f,g\in\T_X$.}
\]
Then $\T_X^a = (\T_X,{\star})$ is the \emph{variant} of $S$ with respect to $a$.  Since there exists a permutation $p\in\S_X$ such that $ap$ is an idempotent, and since the map
\begin{equation}\label{eq:TXap}
\T_X^a\to\T_X^{ap}:f\mt p^{-1}f
\end{equation}
is an isomorphism, we may assume without loss of generality that $a$ is itself an idempotent.  We will adopt this set-up throughout the paper. Any statement that is made with that assumption can readily be translated into the case of an arbitrary sandwich element using the isomorphism \eqref{eq:TXap}.  Using standard tabular notation for transformations, we will write
\begin{equation}\label{eq:a}
a = \trans{A_1&\cdots&A_r\\a_1&\cdots&a_r} , \quad\text{so}\quad \rank(a) = r \COMMa \im(a) = \{a_1,\ldots,a_r\} \ANd X/\ker(a) = \{A_1,\ldots,A_r\}. 
\end{equation}
Since $a$ is an idempotent, we have $a_i\in A_i$ for all $i$.  We now outline some results concerning $\T_X^a$ and certain important subsemigroups; proofs of the assertions can be found in \cite{DE2015,Sandwich1,Sandwich2}.

The set of all regular elements of $\T_X^a$ is a subsemigroup, and we denote it by
\[
P = \Reg(\T_X^a).
\]
Many characterisations of $P$ were given in \cite{DE2015}, the most useful for our purposes being
\[
P = \set{f\in\T_X}{\rank(afa)=\rank(f)}.
\]
Since $\rank(afa)\leq\rank(a)=r$ for any $f\in\T_X$, the elements of $P$ have rank at most $r$.  The set
\[
T = a\T_Xa = a\star\T_X\star a = \set{afa}{f\in\T_X}
\]
is a subsemigroup of both $\T_X$ and $\T_X^a$ ($fg=f\star g$ for $f,g\in T$), and $T\cong\T_r$, where we recall that $r=\rank(a)$.  Since $afa=f$ for all $f\in T$, certainly $\rank(afa)=\rank(f)$ for all such $f$, and so $T\sub P$.  
It also follows that $T=aPa=a\star P\star a$, and that we have a retraction
\begin{equation}\label{eq:phi}
\phi:P\to T:f\mt \ol f = afa.
\end{equation}
That is, $\ol{f\star g}=\ol f\star \ol g(=\ol f\ol g)$ for all $f,g\in P$, and $\ol h=h$ for all $h\in T$.

Our results and proofs often involve the interplay between $P$ and $T$ via~$\phi$.
We will distinguish between standard semigroup theoretic notation as applied to $P$ or $T$ by using appropriate superscripts.
Thus, for example, if $\K$ is any of Green's equivalences $\L$, $\R$, $\J$, $\H$ or~$\D$, we will write $\K^P$ and~$\K^T$ for $\K$ on $P$ and $T$, respectively.
These relations have exactly the same characterisation as in~$\T_X$.  Namely, if $S$ is either $P$ or $T$, then for any $f,g\in S$ we have
\[
f\L^S g \iff \im(f) = \im(g) \COMMa f \R^S g \iff \ker(f) = \ker(g) \ANd f \D^S g \iff \rank(f) = \rank(g).
\]
Regarding the $\K$-classes, we have $K_f^T\sub K_f^P$ for any $f\in T(\sub P)$, and this inclusion is typically strict.  We do note, however, that $H_f^T=H_f^P$ for any $f\in T$, although $P$ typically has more $\H$-classes than~$T$.
The ${\D^S}(={\J^S})$-classes and non-empty ideals of $S$ (still denoting either $P$ or $T$) are the sets
\[
D_q^S = \set{f\in S}{\rank(f)=q} \AND I_q^S = \set{f\in S}{\rank(f)\leq q} \qquad\text{for $1\leq q\leq r$.}
\]
These are ordered by $D_1^S<\cdots<D_r^S$ and $I_1^S\subset\cdots\subset I_r^S = S$.  We also define~$I_0^S=\es$.

An important role will be played by 
the preimages under $\phi$ of Green's relations on $T$:
\[
{\wh \K^P} = {\K^T}\phi^{-1} = \set{(f,g)\in P\times P}{(\ol f,\ol g)\in{\K^T}}.
\]
We write $\wh K_f^P$ for the $\wh\K^P$-class of $f$ in~$P$.  
Note that $\hL^P$ is a right congruence of~$P$, being a pre-image of the right congruence $\L^T$; likewise, $\hR^P$ is a left congruence.  It follows from \cite[Lemma 3.11]{Sandwich1} that ${\K^P}\sub{\hK^P}\sub{\hD^P}={\D^P}$ for ${\K}={\L}$, $\R$ or $\H$, and of course then ${\hJ^P}={\J^P}={\D^P}={\hD^P}$, as $P$ is finite.  The next result gathers some facts from \cite[Theorem 3.14]{Sandwich1}.  For the statement, recall that a \emph{rectangular band} is (isomorphic to) a semigroup of the form $L\times R$ with product $(l_1,r_1)(l_2,r_2)=(l_1,r_2)$, and a \emph{rectangular group} is (isomorphic to) a direct product of a rectangular band and a group.

\begin{lemma}\label{lem:DE}
Let $f\in P$.
\ben
\item \label{DE1} The restriction $\phi\restr_{H_f^P}$ is a bijection $H_f^P\to H_{\ol f}^T$.
\item \label{DE2} If $H_{\ol f}^T$ is a group, then $\wh H_f^P$ is a rectangular group, and in particular a union of group $\H^P$-classes.
\item \label{DE3} If $H_{\ol f}^T$ is not a group, then $\wh H_f^P$ is a union of non-group $\H^P$-classes.  \epfres
\een
\end{lemma}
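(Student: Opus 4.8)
All three parts hinge on controlling the $\phi$-fibres inside Green's classes. The structural fact I would use throughout is the following: for $f\in\T_X$ with $q=\rank(f)$, one has $f\in P$ (i.e.\ $\rank(afa)=q$) if and only if (a) $a$ is injective on $\im(f)$ — equivalently, $\im(f)$ meets each block $A_i$ of $\ker(a)$ at most once — and (b) $\im(a)f=\im(f)$ — equivalently, the $\ker(f)$-blocks containing $a_1,\dots,a_r$ exhaust $X/\ker(f)$; and, crucially, both (a) and (b) depend only on $\im(f)$ and $\ker(f)$. Granting this, here is part \ref{DE1}. Since $\phi$ is a homomorphism it preserves Green's preorders, so $\phi(H_f^P)\sub H_{\ol f}^T$ and only bijectivity needs proof. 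Using the characterizations of $\L^P$ and $\R^P$ by images and kernels, $H_f^P$ is exactly the set of transformations of $X$ with image $\im(f)$ and kernel $\ker(f)$; by the displayed fact every such transformation lies in $P$, so $|H_f^P|=q!=|H_{\ol f}^T|$ and it suffices to prove $\phi\restr_{H_f^P}$ injective. If $f,g\in H_f^P$ satisfy $afa=aga$ then, evaluating on each $A_i$, $(a_if)a=(a_ig)a$ with $a_if,a_ig\in\im(f)=\im(g)$, so $a_if=a_ig$ for all $i$ by (a); and since $a_1,\dots,a_r$ meet every $\ker(f)$-block by (b), $f$ and $g$ induce the same bijection $X/\ker(f)\to\im(f)$, i.e.\ $f=g$. (One can also prove \ref{DE1} with no combinatorics: pushing the two Green's Lemma bijections through $\phi$ gives commuting squares, and $\D^P=\R^P\circ\L^P$ together with $f\D^P\ol f$ and $H_{\ol f}^P=H_{\ol f}^T$ reduces the claim to the trivial case $f=\ol f$.)

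For part \ref{DE2}, suppose $H_{\ol f}^T$ is a group with identity $e$ and put $q=\rank(e)$. Then $\wh H_f^P=\phi^{-1}(H_e^T)$, which is a subsemigroup of $P$ because $H_e^T$ is a subsemigroup of $T$ and $\phi$ is a homomorphism. Every $g\in\wh H_f^P$ satisfies $\rank(g)=\rank(\ol g)=q$, so $\wh H_f^P\sub D_q^P$, and since $\H^P\sub\hH^P$ it is a union of $\H^P$-classes. For $g,h\in\wh H_f^P$ we have $g\star h=gah$ with $\ker(gah)\supseteq\ker(g)$ and $\im(gah)\sub\im(h)$; as $\rank(gah)=q$ this forces $\ker(gah)=\ker(g)$ and $\im(gah)=\im(h)$, so $g\star h$ is $\R^P$-related to $g$ and $\L^P$-related to $h$, while $\ol{g\star h}=\ol g\,\ol h$. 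Hence every $\H^P$-class contained in $\wh H_f^P$ is closed under $\star$, so — being finite and nonempty — is a group; these classes fill out a full rectangular array, since every $\R^P$-class and every $\L^P$-class of the single $\D^P$-class $D_q^P$ intersect; and $g\mapsto(R_g^P,L_g^P,\ol g)$ is an isomorphism from $\wh H_f^P$ onto the rectangular group $(\mathcal I\times\Lambda)\times H_e^T$, where $\mathcal I$ and $\Lambda$ index the $\R^P$- and $\L^P$-classes meeting $\wh H_f^P$. In particular $\wh H_f^P$ is a union of group $\H^P$-classes.

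For part \ref{DE3}, and the main obstacle: if some $g\in\wh H_f^P$ were idempotent then $\ol g=\phi(g)$ would be an idempotent lying in $H_{\ol f}^T$ (as $g\in\wh H_f^P$ gives $\ol g\in H_{\ol f}^T$), making $H_{\ol f}^T=H_{\ol g}^T$ a group, contrary to hypothesis; since also $\H^P\sub\hH^P$, the set $\wh H_f^P$ is a union of $\H^P$-classes, each of which we have just seen is a non-group $\H^P$-class. The step I expect to be the genuine obstacle is the bijectivity in \ref{DE1}: both routes to it require turning the opaque condition $\rank(afa)=\rank(f)$ into something usable — the combinatorial conditions (a), (b), or the relation $f\D^P\ol f$ — after which part \ref{DE2} is routine rectangular-group bookkeeping and part \ref{DE3} is a one-line idempotent argument.
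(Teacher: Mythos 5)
Your proof is correct. Note that the paper itself gives no proof of this lemma --- it is imported verbatim from \cite[Theorem 3.14]{Sandwich1} --- so there is no internal argument to compare against; your derivation (reducing membership in $P$ to the two image/kernel conditions, counting $|H_f^P|=q!=|H_{\ol f}^T|$ to get bijectivity from injectivity, and then the rectangular-array and idempotent arguments for parts \ref{DE2} and \ref{DE3}) is a sound, self-contained reconstruction along the same lines as the cited source.
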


\subsection{Direct and subdirect products}\label{sec:sdps}

Our main result will describe the congruence lattice $\Cong(P)$ as successively decomposed into direct and subdirect products of smaller lattices. Here we introduce terminology and notation for these products. 
What follows is presented in the context of lattices, but is in fact completely general and applies to any algebraic structures.

Let $L_i$ ($i\in I$) be a collection of lattices. The \emph{direct product} $\prod_{i\in I} L_i$ is the lattice with underlying set consisting of all $I$-tuples $(a_i)_{i\in I}=(a_i)$, with each $a_i\in L_i$, and with component-wise meet and join operations
$(a_i)\wedge (b_i)=(a_i\wedge b_i)$ and $(a_i)\vee (b_i)=(a_i\vee b_i)$.
For every $j\in I$, the projection $\pi_j:\prod_{i\in I} L_i\rightarrow L_j$ is a lattice surmorphism. A sublattice $L\leq \prod_{i\in I} L_i$ is said to be a \emph{subdirect product} of the $L_i$ if $\pi_i(L)=L_i$ for all $i\in I$.
A \emph{subdirect embedding} is an injective morphism $L\rightarrow \prod_{i\in I} L_i$ whose image is a subdirect product.

We now  review the well-known criteria for the existence of a direct decomposition in the case of two factors, or a subdirect decomposition for any number of factors. 

\begin{prop}[see {\cite[Theorem II.7.5]{BS1981}}]\label{pr:dp}
Let $L$ be a lattice, and let $\phi_i: L\rightarrow L_i$ $(i=1,2)$ be two lattice surmorphisms.
If $\ker(\phi_1)\cap \ker(\phi_2)=\Delta_L$  and  $\ker(\phi_1)\circ\ker(\phi_2)=\nabla_L$ then
$L\cong L_1\times L_2$ via $a\mapsto (\phi_1(a),\phi_2(a))$.  \epfres
\end{prop}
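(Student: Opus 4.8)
The plan is to show that the map $\psi:L\to L_1\times L_2$ given by $\psi(a)=(\phi_1(a),\phi_2(a))$ is a lattice isomorphism. That $\psi$ is a lattice homomorphism is immediate: its two coordinate maps $\phi_1,\phi_2$ are homomorphisms, and meet and join in $L_1\times L_2$ are computed coordinate-wise, so $\psi$ preserves $\wedge$ and $\vee$. Thus the work reduces to proving that $\psi$ is bijective, and the two hypotheses will supply exactly the two halves of this.

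For injectivity, note that $\ker(\psi)=\ker(\phi_1)\cap\ker(\phi_2)$: indeed $\psi(a)=\psi(b)$ holds precisely when $\phi_1(a)=\phi_1(b)$ and $\phi_2(a)=\phi_2(b)$. By hypothesis this intersection equals $\Delta_L$, and a (lattice) homomorphism whose kernel congruence is trivial is injective, so $\psi$ is injective.

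For surjectivity, fix an arbitrary $(x_1,x_2)\in L_1\times L_2$. Since $\phi_1$ and $\phi_2$ are surmorphisms, choose $b,c\in L$ with $\phi_1(b)=x_1$ and $\phi_2(c)=x_2$. Now $(b,c)\in\nabla_L=\ker(\phi_1)\circ\ker(\phi_2)$, so there exists $d\in L$ with $(b,d)\in\ker(\phi_1)$ and $(d,c)\in\ker(\phi_2)$; that is, $\phi_1(d)=\phi_1(b)=x_1$ and $\phi_2(d)=\phi_2(c)=x_2$, whence $\psi(d)=(x_1,x_2)$. Hence $\psi$ is surjective, and therefore a lattice isomorphism, as claimed.

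The argument is short, and the only point requiring any care is the surjectivity step: one must keep track of the direction in which the two kernel relations are composed, and splice the two separately chosen preimages $b$ and $c$ into a single element $d$ that maps correctly under both $\phi_1$ and $\phi_2$. Everything else is routine. (As remarked in the text, nothing here is special to lattices; the same proof works verbatim for arbitrary algebraic structures.)
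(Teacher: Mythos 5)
Your proof is correct and is the standard argument for this result; the paper itself gives no proof, simply citing \cite[Theorem II.7.5]{BS1981}, and your two-step verification (injectivity from $\ker(\phi_1)\cap\ker(\phi_2)=\Delta_L$, surjectivity by splicing preimages via $\ker(\phi_1)\circ\ker(\phi_2)=\nabla_L$) is exactly what that reference does. One small remark: since kernels of homomorphisms are symmetric, $\ker(\phi_1)\circ\ker(\phi_2)=\nabla_L$ implies $\ker(\phi_2)\circ\ker(\phi_1)=\nabla_L$ as well, so the direction of composition you were careful about is in fact immaterial here.
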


\begin{prop}[see {\cite[Lemma II.8.2]{BS1981}}]\label{pr:sdp}
Let $L$ be a lattice, and let $\phi_i: L\rightarrow L_i$ $(i\in I)$ be lattice surmorphisms.
If $\bigcap_{i\in I}\ker(\phi_i)=\Delta_L$  then the mapping
$a\mapsto (\phi_i(a))$ is a subdirect embedding of $L$ into $\prod_{i\in I} L_i$.  \epfres
\end{prop}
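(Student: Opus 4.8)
The plan is to show that the map $\psi\colon L\to\prod_{i\in I}L_i$ given by $\psi(a)=(\phi_i(a))_{i\in I}$ is the required subdirect embedding, which amounts to checking three things in turn: that $\psi$ is a lattice morphism, that it is injective, and that its image projects onto every factor.

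First I would verify that $\psi$ is a morphism. Since meets and joins in $\prod_{i\in I}L_i$ are computed coordinatewise and each $\phi_i$ preserves $\wedge$ and $\vee$, for any $a,b\in L$ the $i$-th coordinate of $\psi(a\wedge b)$ is $\phi_i(a\wedge b)=\phi_i(a)\wedge\phi_i(b)$, which is exactly the $i$-th coordinate of $\psi(a)\wedge\psi(b)$; hence $\psi(a\wedge b)=\psi(a)\wedge\psi(b)$, and the argument for joins is identical. Next comes injectivity: if $\psi(a)=\psi(b)$ then $\phi_i(a)=\phi_i(b)$ for all $i\in I$, so $(a,b)\in\ker(\phi_i)$ for all $i$, whence $(a,b)\in\bigcap_{i\in I}\ker(\phi_i)=\Delta_L$ and therefore $a=b$. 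Finally, for the subdirect condition, observe that $\pi_j\circ\psi=\phi_j$ for each $j\in I$ by construction, so $\pi_j(\psi(L))=\phi_j(L)=L_j$ because $\phi_j$ is surjective by hypothesis; thus $\psi(L)$ is a sublattice of $\prod_{i\in I}L_i$ that projects onto each $L_j$, which is precisely the assertion that $\psi$ is a subdirect embedding.

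This is the classical Birkhoff criterion, and I do not expect any genuine obstacle: every step is a direct unwinding of the definitions of direct product, kernel, and subdirect product. The only point that warrants a moment's attention is the morphism check, where it is essential that the lattice operations on the product are the coordinatewise ones, so that preservation by $\psi$ reduces exactly to preservation by each individual $\phi_i$.
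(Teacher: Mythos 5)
Your argument is correct and complete: the morphism check via coordinatewise operations, injectivity from $\bigcap_i\ker(\phi_i)=\Delta_L$, and subdirectness from $\pi_j\circ\psi=\phi_j$ together with surjectivity of each $\phi_j$ are exactly the standard steps. The paper does not spell out a proof but simply cites \cite[Lemma II.8.2]{BS1981}, and your write-up is precisely the argument that reference contains, so there is nothing to add.
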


\section{The statement of the main result}
\label{sec:statement}

The main result of this paper is a detailed structural description of the congruence lattice of the regular part of a variant of a finite full transformation monoid. This description is in several stages. The purpose of this section is to give full statements for each of the stages, and, in the process, fix the concepts and notation that will be used subsequently.  
To begin with:
\begin{itemize}
\item
$P$ will denote the semigroup $\Reg(\T_X^a)$, i.e.~the regular part of the variant $\T_X^a$ of the full transformation monoid on a finite set $X$, with respect to the sandwich element $a=\trans{A_1&\cdots&A_r\\a_1&\cdots&a_r}\in\T_X$, which we assume is an idempotent.
\item
$\phi$ is the retraction $f\mapsto \ol f= afa$ from~\eqref{eq:phi}, $T\cong \T_r$ is its image, and $\kappa$ its kernel:
\[
\ka = \ker(\phi) = \set{(f,g)\in P\times P}{\ol f=\ol g}.
\]
\item
We additionally define $\lambda=\kappa\cap{\L^P}$ and $\rho=\kappa\cap{\R^P}$.  
\item For congruences $\xi\in\Cong(\T_r)$ and $\theta\in [\Delta_P,\kappa](\sub\Cong(P))$, we define
\begin{equation}\label{eq:rank}
\rk(\xi) = \max\set{q}{R_{I_q^{\T_r}}\sub\xi}\AND \rk(\th) = \max\set{q}{\ka\cap R_{I_q^P} \sub\th}.
\end{equation}

\end{itemize}
At this point, we have sufficient notation for the first two parts of our main theorem. For the remaining two parts we need to do a bit more work.

For a positive integer $k$ we write $[k]=\{1,\ldots,k\}$.  Consider a non-empty subset ${\es\not=I\subseteq [r]}$, where we recall that $r=\rank(a)$.
Let $\cC_I$ be the set of all cross-sections of $\set{A_i}{ i\in I}$; thus
an element of~$\cC_I$ has the form $C=\set{ c_i}{ i\in I}$ with each $c_i\in A_i$.
For $\emptyset\neq J\subseteq I\sub[r]$ and $C\in\cC_I$ as above, define $C\restr_J=\set{ c_j}{ j\in J}$.
For a relation $\psi$ on $\cC_I$ define 
$\psi\restr_J=\bigset{ (C\restr_J,C'\restr_J)}{ (C,C')\in \psi}$, which is a relation on $\cC_J$.

A \emph{partition} of a set $Z$ is a set of non-empty, pairwise-disjoint subsets of $Z$ (called blocks), which cover $Z$.
If the blocks of a partition $\bI$ are all contained in blocks of another partition $\bJ$ (of the same set), we say that $\bI$ \emph{refines} $\bJ$, and write $\bJ\preceq \bI$. 
For a positive integer $k$, we denote the \emph{trivial partition} of $[k]$ by $\ldb k\rdb = \bigset{\{i\}}{ i\in [k]}$.
Clearly, $\bI\preceq \ldb k\rdb$ for every partition $\bI$ of $[k]$.

Now consider a  partition $\bI$ of $[r] $.
Let $\cP_\bI$ be the set of all partitions of $[n]$ of the form $\bP=\set{ P_I}{ I\in \bI}$
such that $P_I\cap \im(a)=\set{ a_i}{ i\in I}$ for each $I\in\bI$.
For partitions $\bJ\preceq \bI\preceq\ldb r\rdb$, and $\bP\in\cP_\bI$ as above, define $\bP\restr_\bJ\in\cP_\bJ$ to be the partition $\set{ Q_J}{ J\in\bJ}$,
where $Q_J=\bigcup \set{ P_I}{ I\in\bI,\ I\subseteq J}$ for each $J\in\bJ$.
For a relation $\psi$ on $\cP_\bI$ define
$\psi\restr_\bJ=\bigset{ (\bP\restr_\bJ,\bP'\restr_\bJ)}{(\bP,\bP')\in\psi}$, which is a relation on $\cP_\bJ$.

Here then is our main result.  As per our convention introduced in Subsection \ref{subsect:TXa}, it is formulated in terms of an idempotent sandwich element.  There is no loss of generality, due to the isomorphism~\eqref{eq:TXap}.

\begin{thm}
\label{thm:main}
Let $X$ be a finite set, let $a\in\T_X$ be an idempotent of rank $r\geq2$, and let $P=\Reg(\T_X^a)$.
\ben
\item\label{it:main1}
The lattice $\Cong(P)$  subdirectly embeds into $\Cong(\T_r)\times [\Delta_P,\kappa]$, with image
\[
\bigset{ (\xi,\theta)\in\Cong(\T_r)\times [\Delta_P,\kappa]}{ \rank(\xi)\leq\rank(\theta)}.
\]
\item\label{it:main2}
The interval $[\Delta_P,\kappa]$ is isomorphic to the direct product $[\Delta_P,\lambda]\times [\Delta_P,\rho]$.
\item\label{it:main3}
The interval $[\Delta_P,\rho]$ subdirectly embeds into the direct product 
$\prod_{\emptyset\neq I\subseteq [r]} \Eq(\cC_I)$ of full lattices of equivalence relations on the sets $\cC_I$, with image
\[
\Bigset{ (\psi_I) \in \prod_{\emptyset\neq I\subseteq [r]} \Eq(\cC_I)}{ \psi_I\restr_J\subseteq \psi_J\text{ for all } \emptyset\neq J\subseteq I\subseteq [r]}.
\]
\item\label{it:main4}
The interval $[\Delta_P,\lambda]$ subdirectly embeds into the direct product 
$\prod_{\bI\preceq\ldb r\rdb} \Eq(\cP_\bI)$ of full lattices of equivalence relations on the sets $\cP_\bI$, with image
\[
\Bigset{ (\psi_\bI) \in\prod_{\bI\preceq \ldb r\rdb} \Eq(\cP_\bI)}{ \psi_\bI\restr_\bJ\subseteq \psi_\bJ
\text{ for all } \bJ\preceq \bI\preceq \ldb r\rdb}.
\]
\een
\end{thm}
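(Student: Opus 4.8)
\textbf{Part \ref{it:main1}.} The two coordinate maps are $\sigma\mapsto\wh\sigma$ and $\sigma\mapsto\sigma\cap\kappa$, where $\wh\sigma\in\Cong(\T_r)\cong\Cong(T)$ is the congruence that $\sigma$ induces on the quotient $T=P/\kappa$, and $\sigma\cap\kappa\in[\Delta_P,\kappa]$. Since $\phi$ is a retraction we have $f\mathrel\kappa\ol f$ for all $f\in P$, and $(f,g)\in\sigma$ forces $(\ol f,\ol g)\in\sigma$; a short computation with these facts shows $\wh\sigma=\sigma\cap(T\times T)$, so $\sigma\mapsto\wh\sigma$ is visibly a surjective lattice homomorphism. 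That $\sigma\mapsto\sigma\cap\kappa$ is a lattice homomorphism onto $[\Delta_P,\kappa]$ is clear except for preservation of joins, $(\sigma\vee\tau)\cap\kappa=(\sigma\cap\kappa)\vee(\tau\cap\kappa)$; this, and more importantly the \emph{injectivity} of the combined map, are what the analysis of joins in Section~\ref{sect:join} is designed to deliver. The injectivity --- that $\sigma$ is completely determined by the pair $(\wh\sigma,\sigma\cap\kappa)$ --- is the heart of \ref{it:main1} and, I expect, the main obstacle of the whole theorem: one must show that the behaviour of $\sigma$ at the quotient level together with its behaviour inside $\kappa$ leaves no room for manoeuvre, which comes down to tracking exactly which pairs outside $\kappa$ are forced into $\sigma$ by compatibility with the $P$-action. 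Granting the embedding, the image is automatically subdirect, and it remains to match it with the stated set. The inclusion into the stated set is the rank inequality $\rank(\wh\sigma)\le\rank(\sigma\cap\kappa)$: if $\wh\sigma$ collapses all of $I_q^{\T_r}$, then pulling back through $\phi$ and multiplying by elements of $P$ that witness rank $\le q$ forces $\sigma$ to contain $\kappa\cap R_{I_q^P}$. For the reverse inclusion one realises a given pair $(\xi,\theta)$ with $\rank(\xi)\le\rank(\theta)$ by $\sigma:=\theta\vee\phi^{-1}(\xi)$: then $\wh\sigma$ corresponds to $\xi$ (immediate, as $\kappa\sub\phi^{-1}(\xi)$), and one checks $\sigma\cap\kappa=\theta$, the nontrivial half being $(\theta\vee\phi^{-1}(\xi))\cap\kappa\sub\theta$ --- and this is precisely where $\rank(\xi)\le\rank(\theta)$ enters, since every new pair that $\phi^{-1}(\xi)$ contributes inside $\kappa$ has rank $\le\rank(\xi)$ and hence already lies in $\theta$.

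\textbf{Part \ref{it:main2}.} This is an application of Proposition~\ref{pr:dp} to $L=[\Delta_P,\kappa]$ with the surmorphisms $\theta\mapsto\theta\cap\lambda$ and $\theta\mapsto\theta\cap\rho$, onto $[\Delta_P,\lambda]$ and $[\Delta_P,\rho]$ respectively. Granting that these are lattice homomorphisms, the hypotheses of the proposition reduce to three independence statements: (a) $\theta=(\theta\cap\lambda)\vee(\theta\cap\rho)$ for all $\theta\le\kappa$, which gives that the meet of the two kernels is $\Delta_L$; (b) $(\mu\vee\nu)\cap\lambda=\mu$ whenever $\mu\le\lambda$ and $\nu\le\rho$, and dually, which together with (a) gives that the composite of the two kernels is $\nabla_L$; and (c) the join-preservation $(\theta_1\vee\theta_2)\cap\lambda=(\theta_1\cap\lambda)\vee(\theta_2\cap\lambda)$. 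All three rest on a single geometric fact: each $\kappa$-class has a rectangular shape, with $\lambda$ cutting it into rows and $\rho$ into columns, so any equivalence contained in $\kappa$ and stable under the $P$-action splits as a product of a row-equivalence and a column-equivalence. Applied $\kappa$-class by $\kappa$-class, this rectangularity is essentially Lemma~\ref{lem:DE}\ref{DE2}.

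\textbf{Parts \ref{it:main3} and \ref{it:main4}.} These run in parallel, with the data $(\R^P,\rho,\cC_I)$ in one and $(\L^P,\lambda,\cP_\bI)$ in the other; I describe \ref{it:main3}. First comes the combinatorial bookkeeping: for fixed $u\in T$ with $\im(u)=\set{a_i}{i\in I}$ one sets up a bijection between each $\rho$-class contained in $\phi^{-1}(u)$ and $\cC_I$. The key technical lemma --- the analogue of the independence of the $\nu_N$-relations, \cite[Lemma 3.16]{EMRT2018} --- is that the equivalence on $\cC_I$ that a congruence $\theta\le\rho$ induces through these bijections is the same for all admissible $u$, so it defines a single $\psi_I\in\Eq(\cC_I)$. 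Since $\theta\le\rho\le\kappa$ forces $\theta$ to be a disjoint union of equivalences, one per $\rho$-class, the tuple $(\psi_I)$ recovers $\theta$, giving injectivity, after which the homomorphism property is routine. The image lies in the stated set because $\psi_I\restr_J\sub\psi_J$ is exactly the congruence condition: multiplying a $\rho$-related pair of type $I$ by suitable elements of $P$ yields a $\rho$-related pair of type $J$ whose cross-sections are the restrictions to $J$, so membership in $\theta$ propagates downward. The genuinely hard direction is the converse --- realising an arbitrary compatible tuple $(\psi_I)$ by a congruence. One defines $\theta$ block-wise, as the union over all $u$ of the equivalences on the $\rho$-classes of $\phi^{-1}(u)$ prescribed by the $\psi_I$, and then verifies closure under left and right multiplication; the compatibility $\psi_I\restr_J\sub\psi_J$ is precisely what is needed for the downward, rank-lowering moves, while moves that stay within type $I$ are handled by the $u$-independence already established. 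Checking that this $\theta$ really is a congruence --- in particular that multiplication forces in no pairs beyond those prescribed --- is the bulk of the work in the sections culminating in Section~\ref{sec:RL}, and is the main obstacle for \ref{it:main3}--\ref{it:main4}.
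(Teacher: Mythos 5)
Your architecture is the same as the paper's: the coordinate maps $\sigma\mapsto\sigma\restr_T$ and $\sigma\mapsto\sigma\cap\kappa$ for part \ref{it:main1}, Proposition~\ref{pr:dp} applied to $\theta\mapsto\theta\cap\lambda$ and $\theta\mapsto\theta\cap\rho$ for part \ref{it:main2}, and equivalences on $\cC_I$ (resp.\ $\cP_\bI$) recording which $\L^P$-classes (resp.\ $\R^P$-classes) inside a common $\hH^P$-class are linked for parts \ref{it:main3}--\ref{it:main4}. But there is one concrete error, and most of the substantive verifications are deferred rather than carried out.

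The error is in your surjectivity argument for part \ref{it:main1}. You realise a pair $(\xi,\theta)$ with $\rank(\xi)\leq\rank(\theta)$ by $\sigma:=\theta\vee\phi^{-1}(\xi)$. Since $\phi^{-1}(\xi)\supseteq\phi^{-1}(\Delta_T)=\kappa$, this $\sigma$ contains $\kappa$, so $\sigma\cap\kappa=\kappa$, which equals $\theta$ only when $\theta=\kappa$; your supporting claim that every pair $\phi^{-1}(\xi)$ contributes inside $\kappa$ has rank at most $\rank(\xi)$ is false for the full preimage, which contributes \emph{all} of $\kappa$. The correct object is $\xi^\sharp$, the congruence of $P$ \emph{generated} by $\xi$ viewed as a set of pairs from $T\sub P$; the paper computes it explicitly (Lemma~\ref{lem:RN}: $\nab_T^\sharp=\nab_P$ and $(R_N^T)^\sharp=R_N^P$), whence $\xi^\sharp\cap\kappa=\kappa\cap R_{\rank(\xi)}^P$ and your rank argument becomes correct --- though even then $(\theta\vee\xi^\sharp)\cap\kappa\sub\theta$ does not follow merely from $\xi^\sharp\cap\kappa\sub\theta$, since the join can manufacture new pairs; the paper's Lemma~\ref{lem:th} handles this by tracing a $\xi^\sharp\cup\theta$-sequence and projecting it into a single $\H^P$-class via Lemma~\ref{lem:th0}. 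The same reconstruction formula $\sigma=\Xi(\sigma)^\sharp\vee\Theta(\sigma)$ (Proposition~\ref{prop:join}) is also what proves the injectivity you correctly flag as the heart of \ref{it:main1}; without it your proposal contains no injectivity proof. More generally, at each point you yourself label ``the main obstacle'' --- injectivity in \ref{it:main1}, join-preservation and the splitting in \ref{it:main2}, and the verification that the reconstructed relation in \ref{it:main3}--\ref{it:main4} is a congruence --- you describe what must be shown rather than showing it. The common engine for all of these in the paper is Lemma~\ref{lem:fge}: for $(f,g)\in\sigma\cap{\hH^P}$ and an idempotent $e\in L_f^P$, the element $g\star e$ lies in $L_f^P\cap R_g^P$ and is $\sigma$-related to both $f$ and $g$. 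This is the precise form of your ``rectangularity'', and it is proved by a stability argument, not directly by Lemma~\ref{lem:DE}\ref{DE2} (which concerns only the $\hH^P$-classes whose image in $T$ is a group).
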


\begin{rem}\label{rem:1}
The $r=1$ case was excluded from Theorem \ref{thm:main}, but it is easy to understand the semigroup $P=\Reg(\T_X^a)$ and its congruence lattice in this case.  Indeed, here $P=D_1$ is a right zero semigroup, and hence every equivalence is a congruence, meaning that $\Cong(P)=\Eq(P)$.  We also have $T=\{a\}$, and so $\ka=\nab_P$, $\lam={\L^P}=\De_P$ and $\rho={\R^P}=\nab_P$.  Parts \ref{it:main2}--\ref{it:main4} of the theorem are then trivial.  Regarding part \ref{it:main1}, $\Cong(P)$ is of course isomorphic to $\Cong(\T_1)\times[\De_P,\ka]$, as~$\T_1$ is trivial and $[\De_P,\ka]=[\De_P,\nab_P]=\Cong(P)$.  However, there is a slight discrepancy in the stated image of the embedding when $r=1$, as the unique congruence of $\T_1$ (i.e.~$\De_{\T_1}=\nab_{\T_1}$) has rank $1$, while the non-universal congruences in $[\De_P,\ka](=\Cong(P))$ have rank $0$.  This `problem' could be fixed by introducing the convention that $\rank(\De_{\T_1})=0$.
\end{rem}

The four parts of Theorem \ref{thm:main} will be proved as Theorems \ref{thm:Psi}, \ref{th:dk}, \ref{thm:DeR} and \ref{thm:DeL}.  In fact, each of these results provides additional information, in the form of an explicit expression for the (sub)direct embedding in question.  En route to proving them, we will gather enough information to deduce an explicit classification of the congruences of $P$, which will be given in Theorem \ref{thm:class}.

\section{Auxiliary results}
\label{sec:auxi}

The proofs of the four parts of Theorem \ref{thm:main} will be given in Sections \ref{sect:join}--\ref{sec:RL}.  To keep those sections focussed on their main objectives, this section gathers some technical observations that will be subsequently used. 
They concern the relationship between congruences on $P$ and on $T$ (Subsection \ref{sub:restr}), as well as a couple of technical properties of congruences containing $\hH^P$-related pairs (Subsection \ref{subsect:hHP}).

\subsection{Restrictions and lifts}
\label{sub:restr}

Given that $T$ is both a subsemigroup of $P$ and a homomorphic image (via $\phi$) of $P$, any congruence $\si\in\Cong(P)$ induces the restriction to $T$ and the image in $T$:
\[
\si\restr_T = \si\cap(T\times T) = \set{(f,g)\in\si}{f,g\in T} \AND
\overline{\sigma}=\set{(\overline{f},\overline{g})}{(f,g)\in \sigma}.
\]
Conversely, given a congruence $\xi$ on $T$, we can `lift' it to the congruence $\xi^\sharp$ on $P$ generated by $\xi$.
The next lemma establishes some important connections between these constructions.  The first part will be used frequently without explicit reference.

\begin{lemma}\label{la:reli}
\ben
\item\label{it:reli1}
For any $\si\in\Cong(P)$, we have $\si\restr_T = \overline{\sigma}$.
\item\label{it:reli2}
For any $\xi\in \Cong(T)$, we have $\xi = \xi^\sharp\restr_T $.
\een
\end{lemma}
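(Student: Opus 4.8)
For part \ref{it:reli1}, the plan is to prove the two inclusions $\si\restr_T \subseteq \ol\si$ and $\ol\si \subseteq \si\restr_T$ separately. The forward inclusion is immediate: if $(f,g)\in\si$ with $f,g\in T$, then since $\phi$ is a retraction onto $T$ we have $\ol f = f$ and $\ol g = g$, so $(f,g) = (\ol f,\ol g)\in\ol\si$. For the reverse inclusion, suppose $(\ol f,\ol g)\in\ol\si$ with $(f,g)\in\si$. I would like to conclude $(\ol f,\ol g)\in\si$; this follows because $\si$ is a congruence and $\ol f = afa = a\star f\star a$, so applying left-compatibility with $a$ and right-compatibility with $a$ to the pair $(f,g)\in\si$ gives $(a\star f\star a, a\star g\star a) = (\ol f,\ol g)\in\si$. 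Since $\ol f,\ol g\in T$, this shows $(\ol f,\ol g)\in\si\restr_T$, completing the argument. This part is essentially a one-line observation once one unwinds that $\ol f$ is obtained from $f$ by multiplying by $a$ on both sides in $P$.

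For part \ref{it:reli2}, I need to show that for $\xi\in\Cong(T)$, lifting $\xi$ to $P$ and then restricting back to $T$ recovers $\xi$ exactly. The inclusion $\xi\subseteq\xi^\sharp\restr_T$ is trivial, since $\xi\subseteq\xi^\sharp$ and $\xi\subseteq T\times T$. The substantive direction is $\xi^\sharp\restr_T\subseteq\xi$. The key idea is to exhibit a congruence $\zeta$ on $P$ with $\xi\subseteq\zeta$ and $\zeta\restr_T = \xi$; then minimality of $\xi^\sharp$ gives $\xi^\sharp\subseteq\zeta$, hence $\xi^\sharp\restr_T\subseteq\zeta\restr_T = \xi$. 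The natural candidate is $\zeta = \xi\phi^{-1} = \set{(f,g)\in P\times P}{(\ol f,\ol g)\in\xi}$, the preimage of $\xi$ under the homomorphism $\phi$. This $\zeta$ is a congruence on $P$ because $\phi$ is a homomorphism; it contains $\xi$ because $\ol f = f$, $\ol g = g$ for $f,g\in T$; and $\zeta\restr_T = \xi$ by the same retraction computation together with part \ref{it:reli1} (indeed $\zeta\restr_T = \ol\zeta = \xi$, since $\phi$ restricted to $T$ is the identity). Assembling these facts yields the claim.

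The main obstacle, such as it is, is purely bookkeeping: being careful that the "multiply by $a$ on both sides" step in part \ref{it:reli1} is performed in $(P,\star)$ rather than in $\T_X$ under ordinary composition — though since $afa = a\star f\star a$ and $a, f\in P$, and $\si\in\Cong(P)$, this is legitimate. Everything else is a routine application of the universal property of the quotient map $\phi$ and the definition of the congruence generated by a relation.
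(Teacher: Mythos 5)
Your proof is correct. Part \ref{it:reli1} is essentially identical to the paper's argument: the forward inclusion uses $\ol f=f$ for $f\in T$, and the reverse uses $(\ol f,\ol g)=(a\star f\star a,a\star g\star a)\in\si$ by compatibility. For part \ref{it:reli2} you take a genuinely different (and more self-contained) route. The paper disposes of this part in one line by citing the general fact that local monoids $a\star P\star a$ have the congruence extension property, or alternatively by a forward reference to Lemma \ref{lem:RN}, whose proof relies on Mal'cev's classification of $\Cong(T)\cong\Cong(\T_r)$. You instead exhibit the explicit witness $\zeta=\xi\phi^{-1}=\set{(f,g)\in P\times P}{(\ol f,\ol g)\in\xi}$: this is a congruence of $P$ (preimage of a congruence under the surmorphism $\phi$), it contains $\xi$ since $\phi$ fixes $T$ pointwise, and $\zeta\restr_T=\ol\zeta=\xi$; minimality of $\xi^\sharp$ then gives $\xi^\sharp\sub\zeta$ and hence $\xi^\sharp\restr_T\sub\xi$. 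This is exactly the proof of the congruence extension property in the special case of a retract (which $T$ is, via $\phi$), so your argument supplies the details the paper leaves to a citation, and it has the advantage of being completely elementary --- it uses nothing about $\T_X$ beyond the existence of the retraction $\phi$, whereas the paper's alternative route through Lemma \ref{lem:RN} is specific to the transformation setting.
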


\pf
\firstpfitem{\ref{it:reli1}}
If $(f,g)\in\si\restr_T$,  then $(f,g)\in\si$ and $f,g\in T$, and  hence $(f,g)=(\ol f,\ol g)\in\ol\si$. Thus $\sigma\restr_T\subseteq \overline{\sigma}$.
For the reverse inclusion, consider $(\overline{f},\overline{g})\in\overline{\sigma}$, with $(f,g)\in\sigma$.
Then certainly $\overline{f},\overline{g}\in T$, and 
we also have $(\ol f,\ol g) = (afa,aga) = (a\star f\star a,a\star g\star a)\in\si$, so $(\ol f,\ol g)\in\si\restr_T$.

\pfitem{\ref{it:reli2}}
This can be proved by noting that $T = a\star P\star a$ is a local monoid of $P$, and that local monoids have the congruence extension property.  Alternatively, it also follows from Lemma \ref{lem:RN} below.
\epf

Further to part \ref{it:reli2} of the previous lemma, at times we will actually need to know the exact form that a congruence $\xi^\sharp$ takes,
depending on the form of $\xi$ as specified in Theorem~\ref{thm:Malcev}. We now establish the notation for doing this.

For any $1\leq q\leq r$, group $\H$-classes in the $\D$-classes $D_q^P(\sub P)$ and $D_q^T(\sub T)$ are isomorphic to~$\S_q$.  Thus, each normal subgroup $N\normal\S_q$ gives rise to a congruence on both $P$ and $T$, as in \eqref{eq:RIN}.  We compress the notation for these congruences as follows:
\[
R_N^S = R_{I_{q-1}^S,N} = \nab_{I_{q-1}^S} \cup \nu_N^S \cup \De_S \qquad\text{where $S$ stands for either $P$ or $T$.}
\]
Since $T\leq P$, we have $R_N^T\sub R_N^P$.  More specifically, one can easily verify that
\[
R_N^P\restr_T = R_N^T \qquad\text{for any $1\leq q\leq r$ and $N\normal\S_q$.}
\]
Since $T\cong\T_r$, Theorem \ref{thm:Malcev} gives
\[
\Cong(T) = \{\nab_T\} \cup \set{R_N^T}{1\leq q\leq r,\ N\normal\S_q}.
\]
We also abbreviate the notation for the Rees congruences on $P$ and $T$:
\[
R_q^S = R_{I_q^S} = \nab_{I_q^S}\cup\De_S \qquad\text{for each $0\leq q\leq r$, where again $S$ stands for either $P$ or $T$.}
\]
Note that $R_0^S=\De_S$ and $R_r^S=\nab_S$.  For $0\leq q<r$, we have $R_q^S=R_{\{\id_{q+1}\}}^S$.

\begin{lemma}\label{lem:RN}
If $r\geq2$, then $\nab_T^\sharp = \nab_P$, and $(R_N^T)^\sharp = R_N^P$ for all $1\leq q\leq r$ and $N\normal\S_q$.
\end{lemma}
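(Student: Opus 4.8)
The plan is to prove the two equalities by establishing both inclusions, using the fact (Lemma \ref{la:reli}\ref{it:reli2}, or rather the argument behind it) that $\xi\sub\xi^\sharp$ always and $\xi^\sharp\restr_T=\xi$, together with the characterisation $P=\set{f\in\T_X}{\rank(afa)=\rank(f)}$ and the structural facts in Lemma \ref{lem:DE}. For each case, the ``easy'' inclusion is $\xi^\sharp\sub R$ for the relevant Rees-style congruence $R$: since $R_N^P$ (resp.\ $\nab_P$) is a congruence of $P$ whose restriction to $T$ contains $R_N^T$ (resp.\ $\nab_T$)---indeed we noted $R_N^P\restr_T=R_N^T$---and since $\xi^\sharp$ is the \emph{least} congruence of $P$ containing the corresponding congruence of $T$, we immediately get $\nab_T^\sharp\sub\nab_P$ and $(R_N^T)^\sharp\sub R_N^P$. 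So the real content is the reverse inclusions.

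**First I would handle $\nab_T^\sharp=\nab_P$.** It suffices to show that $\nab_T^\sharp$ identifies all of $D_r^P$, since then it contains $\nab_{D_r^P}$, and as $D_r^P$ generates $P$ (every element of $P$ has rank $\le r$ and lies $\D^P$-below $D_r^P$; more precisely any $f\in P$ can be written using elements of $D_r^P$ and the sandwich operation), the congruence closure forces $\nab_T^\sharp=\nab_P$. Fix a group $\H$-class $G=H_e^T\sub D_r^T$ with $e=\ol e$ an idempotent. Since $\nab_T$ collapses $G$ entirely, $\nab_T^\sharp$ identifies all elements of $H_e^P$ (using Lemma \ref{lem:DE}\ref{DE1}, $\phi$ restricts to a bijection $H_e^P\to H_e^T$, but more simply $\nab_T$ collapses $D_r^T$ so $\nab_T^\sharp$ collapses $\wh D_r^P=D_r^P$). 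I would then spell out that collapsing one $\H^P$-class inside $D_r^P$, and using left/right translations by suitable idempotents and permutations together with compatibility, propagates the collapse across the whole $\R^P$-class and $\L^P$-class, hence across $D_r^P$ by the egg-box connectivity $\D^P=\R^P\circ\L^P$; the use of $r\ge 2$ enters because for $r=1$ the semigroup $P=D_1$ is a right-zero semigroup and $\nab_T^\sharp$ would be too coarse a claim only in the trivial degenerate sense handled in Remark \ref{rem:1}.

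**Then I would do $(R_N^T)^\sharp=R_N^P$ by an analogous but more delicate argument.** Write $q$ for the rank attached to $N\normal\S_q$. For the reverse inclusion $R_N^P\sub(R_N^T)^\sharp$, I must show $(R_N^T)^\sharp$ contains $\nab_{I_{q-1}^P}$, contains $\nu_N^P$ on $D_q^P$, and contains $\De_P$ (trivial). For the $\nu_N^P$ part: $R_N^T$ contains $\nu_N^T$ on the chosen group $\H^T$-class $G\sub D_q^T$, i.e.\ $(e,g)\in R_N^T$ for $g\in N$; lifting, $(e,g)\in(R_N^T)^\sharp$, and then using that $\phi\restr_{H_f^P}$ is a bijection onto $H_{\ol f}^T$ (Lemma \ref{lem:DE}\ref{DE1}) plus left/right compatibility, I push these identifications across all group $\H^P$-classes in $\wh D_q^P=D_q^P$ and, via the rectangular-group structure of $\wh H^P$-classes (Lemma \ref{lem:DE}\ref{DE2}), conclude $(R_N^T)^\sharp\restr_{D_q^P}\supseteq\nu_N^P$; combined with the independence of $\nu_N$ on the choice of group $\H$-class this gives the full $\nu_N^P$. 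For the $\nab_{I_{q-1}^P}$ part, I use that when $q\ge 2$ the element $\nu_N^T$ together with products already forces collapse of $I_{q-1}^T$ inside $R_N^T$ (indeed $\nab_{I_{q-1}^T}\sub R_N^T$ by definition), so $(R_N^T)^\sharp\supseteq\nab_{I_{q-1}^T}$, and then the argument of the previous paragraph (applied to the ideal $I_{q-1}^P$, each $\D^P$-class of which meets $I_{q-1}^T$) propagates the collapse down to all of $I_{q-1}^P$; when $q=1$ we have $I_0^P=\es$ and there is nothing to do. Finally $(R_N^T)^\sharp\restr_T=R_N^T$ by Lemma \ref{la:reli}\ref{it:reli2}, which closes the loop.

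**The main obstacle** will be the propagation step: showing that once $(R_N^T)^\sharp$ (or $\nab_T^\sharp$) collapses the group $\H^P$-classes sitting over a given $\D^T$-class, left/right compatibility forces it to collapse \emph{every} $\H^P$-class in the corresponding $\D^P$-class in the prescribed way. The subtlety is that $P$ has many more $\H^P$-classes than $T$, and $\hH^P$-classes that are rectangular groups versus unions of non-group $\H^P$-classes (Lemma \ref{lem:DE}\ref{DE2},\ref{DE3}) behave differently; I expect to need Green's Lemma (Lemma \ref{lem:GL}) to move between $\H^P$-classes within an $\R^P$- or $\L^P$-class via multiplication by elements of $D_r^P$, and to handle carefully that such moves are by the sandwich product $\star$, not ordinary composition---though on $T=a\star P\star a$ the two agree, which is precisely what makes the bookkeeping go through. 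This is routine in spirit but needs to be done with care, and I would likely isolate it as a short sublemma about congruences of $P$ generated by a single collapsed group $\H^P$-class before assembling the two statements.
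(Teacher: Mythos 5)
There is a genuine gap, and it sits at the heart of your argument for $\nab_T^\sharp=\nab_P$: the claim that $D_r^P$ generates $P$ under $\star$ is false. In fact $D_r^P$ is \emph{closed} under the sandwich operation: for $g,h\in D_r^P$ we have $\rank(ga)=r$, so $\im(ga)=\im(a)$, whence $\rank(g\star h)=\rank(gah)=\rank(ah)=r$ (and similarly $\rank(a(g\star h)a)=r$), so $g\star h\in D_r^P$. Thus the top $\D^P$-class is a (completely simple) subsemigroup and generates only itself; collapsing it does not, via any generation argument, force the collapse of the lower ranks. The same defect propagates to your treatment of $\nab_{I_{q-1}^P}$, which invokes "the argument of the previous paragraph". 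To rescue your route you would need the genuinely different (and standard, but not free) fact that a congruence relating two non-$\H^P$-related elements of $D_r^P$ must collapse pairs of elements of strictly smaller rank, and then recurse downward --- which is a different mechanism from generation.

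The paper goes the opposite way: from a pair in $\tau=(R_q^T)^\sharp$ straddling $D_q^T\times D_1^T$, the identity $P\star D_q^P\star P=I_q^P$ together with $P\star D_1^P\star P=D_1^P$ reduces everything to showing that all \emph{constant} maps are $\tau$-related; this is then done by one explicit computation with an element $b=\trans{A_1&A_2&\cdots&A_r\\x&a_2&\cdots&a_r}$, and is precisely where $r\geq2$ enters. Crucially, the generating pair there has its two entries in \emph{different} $\D$-classes, which is what makes the ideal argument bite --- your pair of distinct elements of $D_r^P$ does not have this feature. A secondary point: your plan for the $\nu_N^P$ inclusion (propagating identifications across $\H^P$-classes via Green's Lemma, rectangular groups, and independence of $\nu_N$ from the chosen group $\H$-class) essentially re-derives by hand what Lemma \ref{lem:28} already packages: the paper simply notes $\si\restr_{D_q^P}\sub{\H^P}$, applies that lemma to get $\si\cap{\H^P}\restr_{D_q^P}=\nu_{N'}^P$ with $N\sub N'$, and concludes $\nu_N^P\sub\si$. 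That part of your plan is workable but unnecessarily laborious; the first part, as written, does not go through.
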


\begin{proof}
We first prove that 
\begin{equation}\label{eq:RqT}
(R_q^T)^\sharp=R_q^P\qquad\text{for all } 0\leq q\leq r ,
\end{equation}
and we note that this includes $\nab_T^\sharp=\nab_P$.
Letting $\tau=(R_q^T)^\sharp$, it is clear that $\tau\subseteq R_q^P$, so it remains to show that $R_q^P\subseteq \tau$.
When $q=0$, both relations are $\De_P$, so we now assume that $q\geq1$.  
Note that $R_q^T(\sub\tau)$ contains a pair from $D_q^T\times D_1^T(\sub D_q^P\times D_1^P)$.
Since $P\star D_q^P\star P=I_q^P$ and $P\star D_1^P \star P=D_1^P$,
 it suffices to show that all elements of $D_1^P$ are $\tau$-related; this reasoning is used, and explained in more detail, in \cite[Lemma~2.4]{ER2023}.  Now,
\[
D_1^P = \set{c_x}{x\in X} \AND D_1^T = \set{c_x}{x\in \im(a)},
\]
where we write $c_x:X\to X$ for the constant map with image $\{x\}$.
Since $\nab_{I_q^T}\sub R_q^T$ and $q\geq1$, the elements of $D_1^T$ are all $\tau$-related.  Now let $x\in X$.  Recall that $a=\trans{A_1&A_2&\cdots&A_r\\a_1&a_2&\cdots&a_r}$, and without loss of generality assume that $x\in A_1$.  Keeping in mind $r\geq2$, we can complete the proof of \eqref{eq:RqT} by showing that $c_x\mr\tau c_{a_2}$.  To do so, let
$b = \trans{A_1&A_2&\cdots&A_r\\x&a_2&\cdots&a_r}$.
Since $a=aba$, it follows that $\rank(aba)=r=\rank(b)$, and so $b\in P$.  But from $(c_{a_1},c_{a_2})\in\tau$, it follows that $(c_x,c_{a_2}) = (c_{a_1}\star b,c_{a_2}\star b)\in\tau$, completing the proof of \eqref{eq:RqT}.

Now fix some $1\leq q\leq r$ and $N\unlhd \S_q$, and write $\si = (R_N^T)^\sharp$.  It is clear that
\begin{equation}\label{eq:tau1}
\si \sub R_N^P = \nab_{I_{q-1}^P} \cup \nu_N^P \cup \De_P,
\end{equation}
and we need to show that $R_N^P\subseteq \si$. From \eqref{eq:tau1} we have
\begin{equation}\label{eq:tau2}
\si = \si\restr_{I_{q-1}^P} \cup \si\restr_{D_q^P} \cup \De_P,
\end{equation}
We will complete the proof by showing that
\begin{equation*}
\si \supseteq \nab_{I_{q-1}^P}\AND
\si \supseteq \nu_N^P.
\end{equation*}
The first follows  from  \eqref{eq:RqT}, as $\si = (R_N^T)^\sharp \supseteq(R_{q-1}^T)^\sharp = R_{q-1}^P\supseteq\nab_{I_{q-1}^P}$.  For the second, we first observe from \eqref{eq:tau1} and \eqref{eq:tau2} that
$\si\restr_{D_q^P} \sub \nu_N^P \sub {\H^P}\restr_{D_q^P}$.
Since $D_q^P$ is stable and regular, it follows from Lemma~\ref{lem:28}, writing $e$ for any idempotent in $D_q^T(\sub D_q^P)$, that
\[
\si\restr_{D_q^P} = \si\cap{{\H^P}\restr_{D_q^P}} = \nu_{N'}^P, \WHERE N' = \set{g\in H_e^P}{(e,g)\in\si}.
\]
As explained just before Lemma \ref{lem:28}, we can also assume that $N\sub H_e^P(=H_e^T)$.
Now, for any $g\in N$ we have $(e,g)\in\nu_N^T\sub R_N^T\sub\si$, so that $g\in N'$. 
This shows that $N\sub N'$, and so $\nu_N^P\subseteq \nu_{N'}^P\subseteq\si$,  completing the proof.
\end{proof}

The equality $\nab_T^\sharp = \nab_P$ in Lemma \ref{lem:RN} does not hold for $r=1$.  Indeed, when $r=1$ we have $\nab_T=\De_T$ (cf.~Remark \ref{rem:1}), so that $\nab_T^\sharp=\De_P\not=\nab_P$ (unless $|X|=1$).  The $(R_N^T)^\sharp = R_N^P$ part of the lemma does hold for $r=1$, but simply says $\De_T^\sharp=\De_P$.

\subsection{Interactions between congruences and the $\widehat{\H}^P$-relation}\label{subsect:hHP}

\begin{lemma}\label{lem:fge}
Let $\si\in\Cong(P)$, and suppose $(f,g)\in\si\cap{\hH^P}$.  Then for any idempotent $e\in L_f^P$ we have $(f,g\star e) \in \si$ and $g\star e\in L_f^P \cap R_g^P$.  
\end{lemma}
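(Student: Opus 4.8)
The plan is to establish the two conclusions in turn, the second one via a detour through $T$.  The membership $(f,g\star e)\in\si$ is almost immediate once one observes that $f\star e=f$: since $e$ is an idempotent of $P$ lying in $L_f^P$, we have $f\in P^1\star e$, so writing $f=s\star e$ with $s\in P^1$ gives $f\star e=s\star e\star e=s\star e=f$.  Now $(f,g)\in\si$ and the right-compatibility of $\si$ give $(f,g\star e)=(f\star e,g\star e)\in\si$.

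For the second conclusion I would first reduce it to a statement about ranks.  In $P$, the relation $\L^P$ is determined by images and $\R^P$ by kernels, and $\im(g\star e)\sub\im(e)$ and $\ker(g\star e)\supseteq\ker(g)$ hold automatically.  Moreover $\rank(e)=\rank(f)$ because $e\L^P f$, and $\rank(g)=\rank(f)$ because $f\hH^P g$ gives $\ol f\H^T\ol g$ and $\phi$ preserves rank on $P$.  Hence the membership $g\star e\in L_f^P\cap R_g^P$ will follow as soon as we show $\rank(g\star e)=\rank(f)$.

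To compute $\rank(g\star e)$ I would pass into $T$ through $\phi$.  Since $\phi$ is a homomorphism, $\ol{g\star e}=\ol g\,\ol e$ (product taken in $T$).  The element $\ol e$ is an idempotent of $T$, being the image of the idempotent $e$, and $\ol g\L^T\ol f\L^T\ol e$: the first relation comes from $f\hH^P g$, and the second because $e\L^P f$ forces $e\hL^P f$, i.e.~$\ol e\L^T\ol f$.  An idempotent acts as a right identity on its own $\L$-class, so $\ol g\,\ol e=\ol g$, whence $\ol{g\star e}=\ol g$.  As $g\star e\in P$ and $\phi$ preserves rank on $P$, this yields $\rank(g\star e)=\rank(\ol g)=\rank(g)=\rank(f)$.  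Feeding this back, $\im(g\star e)=\im(e)=\im(f)$ and $\ker(g\star e)=\ker(g)$, that is $g\star e\in L_f^P\cap R_g^P$, as required.

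The one real subtlety is that $\hH^P$ is in general strictly coarser than $\H^P$, $\L^P$ and $\R^P$, so $f$ and $g$ need not lie in a common Green's class of $P$ and one cannot work inside a single $\L^P$- or $\R^P$-class.  Moving the rank computation into $T$, where $\ol f$ and $\ol g$ are genuinely $\H^T$-related and $\ol e$ is an honest idempotent in their common $\L^T$-class, is exactly what makes the collapse $\ol g\,\ol e=\ol g$ available.  The remaining steps rely only on the standard facts that rank governs Green's $\L$ and $\R$ relations within a fixed $\D$-class of $P$ or $T$, and that $\phi$ preserves rank on $P$.
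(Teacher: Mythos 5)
Your proof is correct and follows essentially the same route as the paper's: both establish $(f,g\star e)\in\si$ from $f=f\star e$, and both hinge on the computation $\ol{g\star e}=\ol g\,\ol e=\ol g$, using that $\ol e$ is an idempotent of $T$ which is $\L^T$-related to $\ol g$. The only difference is the final step: where the paper invokes stability of the finite semigroup $P$ to upgrade the resulting $\D^P$-relations to $g\star e\R^P g$ and $g\star e\L^P f$, you carry out the equivalent concrete argument via ranks, images and kernels, which is perfectly valid here since Green's relations on $P$ are characterised exactly as in $\T_X$.
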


\pf
From $e\in L_f^P$ and $f\mr\si g$, we have $f = f\star e \mr\si g\star e$.  
We also note that
\[
g \hH^P f \L^P e \implies g\hL^P e \implies \ol g\L^T\ol e \implies \ol g=\ol g\star\ol e=\ol{g\star e} \implies g \hH^P g\star e \implies g\D^P g\star e.
\]
It follows from stability that $g\R^P g\star e$.
Since $e\L^P f \hH^P g \D^P g\star e$, it follows that $g\star e \D^P e$, and this time stability gives $g\star e\L^Pe\L^Pf$.  The last two conclusions give $g\star e\in R_g^P\cap L_f^P$.\epf

\begin{lemma}
\label{lem:th0}
Let $\si\in\Cong(P)$, and suppose $\si\cap(H\times H')\not=\es$ for a pair of $\H^P$-classes $H$ and $H'$ contained in a common $\hH^P$-class.  Then for every $h\in H$, we have $(h,h')\in\si$, where $h'$ is the unique element of $H'$ with $\ol h'=\ol h$.
\end{lemma}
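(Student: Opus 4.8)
The plan is to exploit the fact that $\phi$ restricts to a bijection $H^P_f \to H^T_{\ol f}$ on each $\H^P$-class (Lemma~\ref{lem:DE}\ref{DE1}), so that for each $h \in H$ the element $h'$ in the statement is well-defined: it is the unique preimage in $H'$ of $\ol h \in H^T_{\ol h}$. Note that $H$ and $H'$ lie in a common $\hH^P$-class, so $\phi(H) = H^T_{\ol h} = \phi(H')$, which is exactly what makes $h'$ exist and be unique. I would fix a pair $(f, g) \in \si \cap (H \times H')$ to start from, and aim to move this single known related pair around inside the $\hH^P$-class using Green's Lemma, keeping track of the $\phi$-images so as to land on the specific pair $(h, h')$ demanded.

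First I would dispose of the easy structural observations: since $f \hH^P g$ we have $\ol f \H^T \ol g$, and in particular $\ol f \L^T \ol g$ and $\ol f \R^T \ol g$. Then I would pick an idempotent $e \in L^P_f$ (one exists since $f$ is regular) and apply Lemma~\ref{lem:fge}: we get $(f, g\star e) \in \si$ with $g \star e \in L^P_f \cap R^P_g$. Replacing $g$ by $g \star e$, we may therefore assume from the outset that $f \L^P g$ — i.e. $H$ and $H'$ are in a common $\L^P$-class, not merely a common $\hH^P$-class. Dually, using an idempotent in $R^P_f$ and the left-right dual of Lemma~\ref{lem:fge}, one can further arrange $f \R^P g$, hence $f \H^P g$; but then $H = H'$ and, since $\phi\restr_H$ is injective and $\ol f = \ol g$ forces $f = g$, this degenerate case is only useful if we are careful — so instead I would keep just $f \L^P g$ and use Green's Lemma in the $\R^P$-direction.

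Now the core step. With $f \L^P g$ fixed, take an arbitrary $h \in H = H^P_f$. Since $h \R^P f$ (as $H \subseteq R^P_f$), Green's Lemma gives $s, t \in P^1$ with $h = f \star s$, $f = h \star t$, and right translation by $s$ maps $H^P_f \to H^P_h = H$ bijectively; in particular it maps $L^P_g = L^P_f$ across, sending $H^P_g = H'$ to $H^P_{g\star s}$. From $(f,g) \in \si$ we get $(h, g \star s) = (f\star s, g\star s) \in \si$. It remains to identify $g \star s$: it lies in $H^P_{g \star s}$, and computing $\phi$-images, $\ol{g \star s} = \ol g \star \ol s$; meanwhile $\ol h = \ol{f \star s} = \ol f \star \ol s$. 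Since $\ol f \H^T \ol g$ and $T \cong \T_r$, right multiplication by $\ol s$ behaves the same on $H^T_{\ol f}$ and $H^T_{\ol g}$ (they are in a common $\L^T$-class, so share kernels, and $\ol{s}$ acts identically — this is where the stability/structure of $\T_r$ enters), giving $\ol{g\star s} \H^T \ol{f \star s} = \ol h$, so $g \star s \in \wh H^P_h$. Finally I would check that $g \star s$ actually lies in $H' = H^P_g$ translated correctly, i.e. that $g\star s$ is $\R^P$-related to $g$ and $\L^P$-related to... — here one must be slightly careful, because $H'$ is a \emph{fixed} $\H^P$-class and $h$ ranges over $H$, so the translate $H^P_{g\star s}$ need not equal $H'$ in general. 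The resolution: the statement only asserts the conclusion for $h'$ \emph{defined as the element of $H'$ with $\ol{h'} = \ol h$}, so I should instead argue that $g \star s$ and $h'$ are in the same $\H^P$-class and have the same $\phi$-image, hence are equal — which needs $g \star s \in H'$. To force this, I would choose $s$ more carefully, or equivalently first replace $g$ by the correct element of $H'$; concretely, redo Lemma~\ref{lem:fge}'s argument with $f$ replaced by $h$ and then realize $h'$ explicitly, so that the translation lands exactly in $H'$.

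The main obstacle I anticipate is precisely this bookkeeping: ensuring the Green's-Lemma translates land in the \emph{prescribed} $\H^P$-class $H'$ and not merely in \emph{some} $\H^P$-class within the $\hH^P$-class, and matching that up with the $\phi$-image condition that pins down $h'$ uniquely. The clean way to handle it is to phrase everything in terms of $\phi$: use Lemma~\ref{lem:DE}\ref{DE1} to get $h' \in H'$ with $\ol{h'} = \ol h$, observe $h \hH^P h'$, and then show $(h, h') \in \si$ by transporting $(f,g)$ via a pair of Green's Lemma translations (one on the $\R^P$ side to fix the kernel, one on the $\L^P$ side to fix the image), at each stage verifying the $\phi$-image is preserved appropriately — the congruence $\si$ only ever sees pairs related by $P$-multiplication, so as long as both coordinates are translated by the same elements, membership in $\si$ is automatic, and the only real content is the identification of the resulting second coordinate with $h'$.
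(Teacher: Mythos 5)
Your setup and reduction match the paper's: fix $(f,g)\in\si\cap(H\times H')$, use Lemma~\ref{lem:fge} to replace $g$ by $g\star e\in L_f^P\cap R_g^P$, and thereby reduce to the case where $f$ and $g$ are $\L^P$- (or $\R^P$-) related. But your core step has a genuine gap, and the difficulty you flag at the end (``the only real content is the identification of the resulting second coordinate with $h'$'') is exactly where the argument fails, not a bookkeeping matter. Writing $h=f\star s$ and translating, you do get $(h,g\star s)\in\si$, and in fact $g\star s$ \emph{does} land in $H'$ (your worry on that point is unfounded: with $f\L^P g$ and $h\R^P f$, Green's Lemma places $g\star s$ in $L_f^P\cap R_g^P=H'$). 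The real problem is that $\ol{g\star s}=\ol g\,\ol s$ whereas $\ol h=\ol f\,\ol s$, and since right multiplication by $\ol s$ is injective on $H_{\ol f}^T$ these agree only when $\ol f=\ol g$ --- which is not given; the hypothesis yields only $\ol f\H^T\ol g$. So $g\star s$ is in general \emph{not} the prescribed element $h'$. Moreover, no amount of translating the pair by common left/right factors can repair this: $\phi$ is a homomorphism, so every translate $(b\star f\star c,\,b\star g\star c)$ carries the same discrepancy between the $\phi$-images of its two coordinates, and your fallback suggestion --- ``first replace $g$ by the correct element of $H'$'' --- is circular, since knowing that the element $f'\in H'$ with $\ol{f'}=\ol f$ is $\si$-related to $f$ is precisely the content of the lemma for $h=f$.

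The missing idea is an operation that is \emph{not} a common translation: taking $\star$-powers of the pair, i.e.\ using two-sided compatibility in the form $(f,g),(f,g)\in\si\implies(f\star f,g\star g)\in\si$. The paper first reduces (via a further Green's Lemma transport and a case split) to the situation where $\wh H_f^P$ is a rectangular group, so that $H$ and $H'$ are group $\H^P$-classes; raising $(f,g)$ to a suitable power then produces the pair of identity elements $(e,e')\in\si$, and these automatically satisfy $\ol e=\ol{e'}$, being $\hH^P$-related idempotents whose images are the unique idempotent of $H_{\ol f}^T$. Only now does translation finish the job: $h'=h\star e'$ lies in $H'$, satisfies $\ol{h'}=\ol h\,\ol{e'}=\ol h\,\ol e=\ol h$, and $(h,h')=(h\star e,h\star e')\in\si$. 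Without the power-taking step and the rectangular-group case analysis that makes it available, your argument only shows that every $h\in H$ is $\si$-related to \emph{some} element of $H'$, which is strictly weaker than the statement and insufficient for its later uses (e.g.\ in Lemma~\ref{lem:th}).
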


\pf
Fix some $(f,g)\in\si\cap(H\times H')$, and note that $H=H_f^P$ and $H'=H_g^P$.  Let $e$ be any idempotent in $L_f^P$.  Lemma \ref{lem:fge} tells us that $g\star e$ is in $L_f^P\cap R_g^P$ and is $\sigma$-related to both $f$ and $g$.  It therefore suffices to prove the current lemma in the case that $f$ and $g$ are $\R^P$- or $\L^P$-related.  We assume that $f\R^Pg$, with the other case being dual.

We distinguish two cases, depending on whether $\widehat{H}_f^P=\widehat{H}_g^P$ is a rectangular group or not.

\pfcase{1}
Suppose first that $\widehat{H}_f^P=\widehat{H}_g^P$ is a rectangular group.  Let $e$ and $e'$ denote the idempotents in $H_f^P$ and $H_g^P$ respectively.  Raising $(f,g)\in\sigma$ to an appropriately large power yields $(e,e')\in\sigma$.  Since these idempotents are $\hH^P$-related, we also have $\ol e=\ol e'$.  Since $h=h\star e$, it follows from Green's Lemma that the element $h' = h\star e'$ belongs to $H_{e'}^P=H_g^P$.  We also have $(h,h') = (h\star e,h\star e')\in\si$, and $\ol h = \ol h\star\ol e = \ol h\star\ol e' = \ol h'$.

\pfcase{2}
Now suppose $\widehat{H}_f^P=\widehat{H}_g^P$ is not a rectangular group.  Choose any $f'\in L_f^P$ whose $\hH^P$-class is a rectangular group, and pick any $b,c\in P$ such that $b\star f=f'$ and $c\star f'=f$.  By Green's Lemma the maps
\[
R_f^P\to R_{f'}^P : u\mt b\star u \AND R_{f'}^P\to R_f^P : v\mt c\star v
\]
are mutually inverse $\L^P$-preserving bijections.  In particular, the element $g'=b\star g$ is $\R^P$-related to~$f'$, and we have $(f',g')=(b\star f,b\star g)\in\sigma$ and $g'\L^Pg\hL^Pf\L^Pf'$, which together with $g'\R^Pf'$ implies $(f',g')\in{\hH^P}$.  Case 1 therefore applies, and tells us that the element $d = b\star h\in H_{f'}^P$ is $\si$-related to the unique element $d' \in H_{g'}^P$ with $\ol d = \ol d'$.  Now set $h' = c\star d'$, noting that this belongs to $H_g^P$.  We then have $(h,h') = (c\star d,c\star d')\in\si$, and $\ol h = \ol c \star\ol d = \ol c \star\ol d' = \ol h'$.
\epf

\section{\boldmath A subdirect decomposition of $\Cong(P)$}\label{sect:join}

In this section we prove part \ref{it:main1} of Theorem \ref{thm:main}, which is subsumed in the following.  Here we recall that $T\cong\T_r$, and analogously to the rank parameters in \eqref{eq:rank} we write 
\begin{equation}\label{eq:rkxi}
\rk(\xi) = \max\set{q}{R_q^T\sub\xi} \qquad\text{for $\xi\in\Cong(T)$.}
\end{equation}

\begin{thm}\label{thm:Psi}
For $r\geq2$, the mapping 
\[
\Cong(P)\rightarrow \Cong(T)\times [\Delta_P,\kappa]:\sigma\mapsto (\sigma\restr_T,\sigma\cap\kappa)
\]
is a subdirect embedding of lattices, with image
\[
\bigset{ (\xi,\theta)\in  \Cong(T)\times [\Delta_P,\kappa]}{ \rk(\xi)\leq \rk(\theta)}.
\]
\end{thm}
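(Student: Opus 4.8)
The map $\Psi:\sigma\mapsto(\sigma\restr_T,\sigma\cap\kappa)$ is clearly a lattice morphism, since both $\sigma\mapsto\sigma\restr_T$ and $\sigma\mapsto\sigma\cap\kappa$ preserve meets trivially, and joins can be checked componentwise (using that $\sigma\restr_T=\overline\sigma$ from Lemma \ref{la:reli}\ref{it:reli1}, so the first coordinate is the image under the surmorphism $\phi$, which preserves joins; and intersecting with the fixed congruence $\kappa$ preserves joins in the interval $[\Delta_P,\kappa]$ because $\Cong(P)$ is... actually this last point needs care, see below). First I would establish that $\Psi$ is injective, which by Proposition \ref{pr:sdp} reduces to showing $\ker(\sigma\mapsto\sigma\restr_T)\cap\ker(\sigma\mapsto\sigma\cap\kappa)=\Delta_{\Cong(P)}$; concretely, if $\sigma_1\restr_T=\sigma_2\restr_T$ and $\sigma_1\cap\kappa=\sigma_2\cap\kappa$ then $\sigma_1=\sigma_2$. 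The natural approach: given a pair $(f,g)\in\sigma_1$, use the retraction to write $(\overline f,\overline g)\in\sigma_1\restr_T=\sigma_2\restr_T\subseteq\sigma_2$, and separately we'd want $(f,\overline f),(g,\overline g)$ or some connecting pairs to lie in $\sigma_i\cap\kappa$; but $(f,\overline f)$ need not be in $\sigma_1$. The right move is to factor an arbitrary $\sigma$-pair through $\kappa$-pairs and $T$-pairs: I expect one shows every $\sigma\in\Cong(P)$ satisfies $\sigma=(\sigma\cap\kappa)\vee(\sigma\restr_T)^\sharp$, using Lemma \ref{lem:RN} to identify $(\sigma\restr_T)^\sharp$ explicitly (it is $R_N^P$ or $\nabla_P$ depending on $\sigma\restr_T$), and Lemma \ref{lem:th0} / Lemma \ref{lem:fge} to move $\widehat\H^P$-related pairs around. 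This reconstruction of $\sigma$ from its two images is the heart of injectivity and of the image description.

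Next I would prove surjectivity onto the claimed image. For one inclusion: given $\sigma\in\Cong(P)$, I need $\rk(\sigma\restr_T)\leq\rk(\sigma\cap\kappa)$. Unpacking the definitions in \eqref{eq:rank} and \eqref{eq:rkxi}: if $R_q^T\subseteq\sigma\restr_T$ then I must show $\kappa\cap R_q^P\subseteq\sigma$. Since $R_q^T=(R_q^T)$ generates $R_q^P$ by Lemma \ref{lem:RN} (equation \eqref{eq:RqT}), and $R_q^T\subseteq\sigma$ forces $(R_q^T)^\sharp=R_q^P\subseteq\sigma$, hence $\kappa\cap R_q^P\subseteq\sigma\cap\kappa$, giving $\rk(\sigma\cap\kappa)\geq q$; taking $q=\rk(\sigma\restr_T)$ finishes it. For the reverse inclusion — that every pair $(\xi,\theta)$ with $\rk(\xi)\leq\rk(\theta)$ is hit — I would construct $\sigma=\xi^\sharp\vee\theta$ and verify $\sigma\restr_T=\xi$ and $\sigma\cap\kappa=\theta$. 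The first follows from Lemma \ref{la:reli}\ref{it:reli2} once I check $\theta$ contributes nothing to the $T$-restriction beyond $\xi$, i.e.\ $(\xi^\sharp\vee\theta)\restr_T=\xi^\sharp\restr_T=\xi$; this uses that $\theta\subseteq\kappa=\ker\phi$ so $\overline\theta=\Delta_T$, combined with a computation that joining with a sub-$\kappa$ congruence doesn't enlarge the image in $T$. The second equation, $\sigma\cap\kappa=\theta$, is where the rank hypothesis $\rk(\xi)\leq\rk(\theta)$ is essential: we always have $\theta\subseteq\sigma\cap\kappa$, and the reverse requires that the "new" pairs created in the join $\xi^\sharp\vee\theta$ that happen to land inside $\kappa$ were already in $\theta$ — which fails exactly when $\xi^\sharp$ reaches low $\D$-classes that $\theta$ hasn't collapsed, i.e.\ when $\rk(\xi)>\rk(\theta)$.

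The main obstacle, I expect, is the last computation — showing $\xi^\sharp\vee\theta$ meets $\kappa$ in exactly $\theta$ under the rank condition. A $\sigma$-pair $(f,g)$ with $\overline f=\overline g$ arises in the join as an alternating chain $f=h_0\,\xi^\sharp\,h_1\,\theta\,h_2\,\cdots\,h_m=g$; one must argue that the $\xi^\sharp$-links, which are governed by the Rees/normal-subgroup structure of $\xi^\sharp=R_N^P$, can only "act" on $\D$-classes of rank $\leq\rk(\xi)\leq\rk(\theta)$, where $\theta\supseteq\kappa\cap R^P_{\rk(\theta)}$ already identifies everything within a $\kappa$-class, so the chain can be short-circuited to a single $\theta$-step. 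Making this precise will likely require a careful case analysis using stability, Green's Lemma, and Lemmas \ref{lem:fge}–\ref{lem:th0} to control how $\xi^\sharp$-related and $\theta$-related pairs interact on the relevant $\widehat\H^P$-classes; I would isolate it as a preliminary claim (perhaps: "if $(f,g)\in\xi^\sharp\vee\theta$ with $\overline f=\overline g$, then $(f,g)\in\theta$") and prove it by induction on the length of the connecting chain, handling the rank-$\leq\rk(\theta)$ case via the Rees part of $\theta$ and the higher-rank case via $\overline f=\overline g$ forcing the chain to stay within a single $\widehat\H^P$- or $\H^P$-class where Lemma \ref{lem:th0} applies.
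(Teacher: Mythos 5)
Your plan follows essentially the same route as the paper: the decomposition via Proposition \ref{pr:sdp}, the reconstruction identity $\sigma=(\sigma\restr_T)^\sharp\vee(\sigma\cap\kappa)$ as the engine for injectivity (this is the paper's Proposition \ref{prop:join}), the rank inequality $\rk(\sigma\restr_T)\leq\rk(\sigma\cap\kappa)$ via Lemma \ref{lem:RN}, and the verification that $\sigma=\xi^\sharp\vee\theta$ recovers $\xi$ and $\theta$ (the paper's Lemmas \ref{lem:xi} and \ref{lem:th}, with your sketched chain argument for $(\xi^\sharp\vee\theta)\cap\kappa=\theta$ matching the paper's two-case analysis). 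These parts are sound.

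The one genuine gap is the point you flag and then never return to: that $\Theta:\sigma\mapsto\sigma\cap\kappa$ preserves joins. Intersection with a fixed congruence does not commute with joins in a general congruence lattice, so this is not a formality, and it is a required hypothesis of Proposition \ref{pr:sdp}. (By contrast, your instinct for the first coordinate is fine: pushing a $\sigma_1\cup\sigma_2$-chain through the retraction $\phi$ does show $\Xi$ preserves joins directly.) The paper closes this by combining Proposition \ref{prop:join} with the fact that $\Cong(T)\cong\Cong(\T_r)$ is a \emph{chain} (Mal'cev): given $\sigma_1,\sigma_2$ one may assume $\xi_1\subseteq\xi_2$, whence $\sigma_1\vee\sigma_2=(\xi_1\vee\xi_2)^\sharp\vee(\theta_1\vee\theta_2)$ with $\rk(\xi_1\vee\xi_2)=\rk(\xi_2)\leq\rk(\theta_2)\leq\rk(\theta_1\vee\theta_2)$, and then Lemma \ref{lem:th} applies to give $\Theta(\sigma_1\vee\sigma_2)=\theta_1\vee\theta_2$. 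So the missing step is recoverable from the ingredients you already have, but it genuinely needs the chain structure of $\Cong(T)$ as an external input, and as written your argument for the morphism property does not go through.
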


\pf
Since $T$ is a subsemigroup of $P$, and $\ka$ a congruence of~$P$, we have two well-defined mappings
\begin{equation}
\label{eq:XiTh}
\Xi:\Cong(P)\to\Cong(T):\si\mt\si\restr_T \AND \Th:\Cong(P)\to[\De_P,\ka]:\si\mt\si\cap\ka.
\end{equation}
By Proposition \ref{pr:sdp}, we can show that these induce the stated subdirect embedding by showing that:
\bit
\item $\Xi$ and $\Th$ are lattice surmorphisms (Lemma \ref{lem:sur}), and
\item $\ker(\Xi)\cap\ker(\Th) = \De_{\Cong(P)}$ (Corollary \ref{lem:inj}).
\eit
Lemmas \ref{lem:etaxizeth}, \ref{lem:xi} and \ref{lem:th} combine to show that the image of the embedding is as stated.
\epf

We now set off towards establishing these lemmas, beginning with the following key observation.  Throughout this section we assume that $r\geq2$, even though many of the results and proofs are valid (albeit trivial) for $r=1$.

\begin{prop}\label{prop:join}
For any $\si\in\Cong(P)$ we have $\si = \Xi(\si)^\sharp\vee\Th(\si)$.
\end{prop}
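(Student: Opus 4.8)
The plan is to prove the two inclusions $\si\subseteq \Xi(\si)^\sharp\vee\Th(\si)$ and $\si\supseteq \Xi(\si)^\sharp\vee\Th(\si)$ separately. The second inclusion should be essentially immediate: we have $\Th(\si)=\si\cap\ka\subseteq\si$, and also $\Xi(\si)=\si\restr_T\subseteq\si$, whence $\Xi(\si)^\sharp\subseteq\si$ because $\si$ is a congruence of $P$ containing $\Xi(\si)$, and $\Xi(\si)^\sharp$ is by definition the smallest such congruence. Taking the join of these two sub-relations of $\si$ yields $\Xi(\si)^\sharp\vee\Th(\si)\subseteq\si$.

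The real content is the forward inclusion. First I would set $\tau=\Xi(\si)^\sharp\vee\Th(\si)$ and fix a pair $(f,g)\in\si$; the goal is $(f,g)\in\tau$. The natural strategy is to factor the passage from $f$ to $g$ through the retraction $\phi$. Observe that $(\ol f,\ol g)=(afa,aga)=(a\star f\star a,a\star g\star a)\in\si$, and since $\ol f,\ol g\in T$ this gives $(\ol f,\ol g)\in\si\restr_T=\Xi(\si)$, hence $(\ol f,\ol g)\in\Xi(\si)^\sharp\subseteq\tau$. It therefore suffices to show $(f,\ol f)\in\tau$ and $(g,\ol g)\in\tau$; by symmetry we focus on $(f,\ol f)$. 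The key point is that $(f,\ol f)\in\ka$ always holds, since $\phi$ is a retraction (so $\ol{\ol f}=\ol f$), so we must upgrade this to membership in $\si\cap\ka=\Th(\si)\subseteq\tau$ — i.e.\ we need $(f,\ol f)\in\si$. This is where we use $(f,g)\in\si$: from it we want to deduce $(f,\ol f)\in\si$. Here I expect to exploit that $\ol f=afa=a\star f\star a$, and likewise for $g$; but a direct such computation only recovers $(\ol f,\ol g)\in\si$, not $(f,\ol f)\in\si$. So instead the argument should run: since $(\ol f,\ol g)\in\si$ and $(f,g)\in\si$, and we want a relation purely "within the $\ka$-class of $f$", we should transport using idempotents of $T$. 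Concretely, pick the idempotent $\ol f\in T$ (note $\ol f=\ol f\star\ol f$ is not automatic, so instead take $e\in E(T)$ with $e\,\R^T\,\ol f$ or in $H_{\ol f}^T$ suitably) — actually the cleanest route is: it is enough to show $(h,\ol h)\in\si$ for \emph{all} $h\in P$ with $h$ ranging over a $\si$-class, which we get because $\Th(\si)^\sharp=\Th(\si)$ and an induction on the structure.

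Let me state the obstacle plainly and the fix I anticipate. The main obstacle is establishing $(f,\ol f)\in\si$ from $(f,g)\in\si$ for a \emph{single} pair, which is false in general — e.g.\ if $\si=\De_P$ then $f\ne\ol f$ forces nothing. So the statement $\si=\Xi(\si)^\sharp\vee\Th(\si)$ cannot be proved pairwise in that naive way; rather, one must show that whenever $(f,g)\in\si$, the pairs $(f,\ol f)$ and $(g,\ol g)$ lie in $\si$ \emph{because $\si$ is generated together with its image}. The correct mechanism, which I would carry out, is: show $\si$ is generated (as a congruence) by $\si\restr_T$ together with $\si\cap\ka$. For this, take $(f,g)\in\si$ and consider whether $\ol f=\ol g$ (if so, $(f,g)\in\si\cap\ka=\Th(\si)$, done) or $\ol f\ne\ol g$. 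In the latter case one argues by downward induction on $\rank(f)$ (equivalently on the position of $D^P$-classes): using Green's Lemma and the structure of $\wh H^P$-classes from Lemma \ref{lem:DE}, together with Lemmas \ref{lem:fge} and \ref{lem:th0}, one shows that $(f,g)$ can be "resolved" into a product of pairs each of which lies either in $\si\restr_T$ (after conjugating by $a$) or in $\si\cap\ka$ (moving within a $\ka$-class). I expect the tricky bookkeeping to be precisely this resolution: lifting the $T$-pair $(\ol f,\ol g)$ back into $P$ via $\xi^\sharp$ and then correcting by $\ka$-pairs that are forced into $\si$ because they arise by multiplying $(f,g)$ on the left/right by elements of $P$ and by idempotents, invoking stability and the rectangular-group structure of $\wh H^P$-classes. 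This is the heart of the proof and the step most likely to require the auxiliary lemmas of Section \ref{sec:auxi} in full.
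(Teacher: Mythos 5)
Your reverse inclusion is fine, and you are right to flag that the naive route (trying to show $(f,\ol f)\in\si$) fails. But the proposal then stops at exactly the point where the proof has to happen: the ``resolution'' of a pair $(f,g)\in\si$ into a $\Th(\si)$-step and a $\Xi(\si)^\sharp$-step is described only as an anticipated downward induction on rank, and no such induction is needed or carried out. The missing idea, which turns the forward inclusion into a short case analysis, is to invoke Mal'cev's classification: since $T\cong\T_r$, Theorem \ref{thm:Malcev} says $\xi=\si\restr_T$ is either $\nab_T$ or $R_N^T=\nab_{I_{q-1}^T}\cup\nu_N^T\cup\De_T$ for some $q$ and $N\normal\S_q$, and Lemma \ref{lem:RN} then gives $\xi^\sharp$ explicitly (namely $\nab_P$ or $R_N^P=\nab_{I_{q-1}^P}\cup\nu_N^P\cup\De_P$). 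Given $(f,g)\in\si$, the pair $(\ol f,\ol g)$ lies in $\xi$, so one asks which of the three pieces it falls into. If $(\ol f,\ol g)\in\De_T$ then $(f,g)\in\si\cap\ka=\Th(\si)$. If $(\ol f,\ol g)\in\nab_{I_{q-1}^T}$ then $\rank(f),\rank(g)\leq q-1$, so $(f,g)\in\nab_{I_{q-1}^P}\sub R_N^P=\xi^\sharp$ directly --- no induction, no resolution. The only case needing work is $(\ol f,\ol g)\in\nu_N^T$, where $f\hH^P g$ and Lemma \ref{lem:th0} (which you correctly identified as relevant) produces the unique $f'\in H_g^P$ with $\ol f'=\ol f$ and gives $(f,f')\in\si\cap\ka=\Th(\si)$; then $(f',g)\in{\H^P}$ with $(\ol f',\ol g)=(\ol f,\ol g)\in\nu_N^T$ forces $(f',g)\in\nu_N^P\sub\xi^\sharp$, whence $(f,g)\in\Th(\si)\circ\xi^\sharp\sub\Xi(\si)^\sharp\vee\Th(\si)$.

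Without the appeal to Mal'cev's theorem your sketch cannot close: you have no description of $\xi^\sharp$ and hence no way to certify that any particular pair lies in it, and the low-rank case (where $\ol f$ and $\ol g$ are not $\H^T$-related, so Lemma \ref{lem:th0} does not apply) is left unaddressed by the machinery you list. So while the ingredients you name are the right ones, the proposal has a genuine gap at its central step.
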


\pf
Throughout the proof we write $\xi=\Xi(\si) = \si\restr_T$ and $\th=\Th(\si)=\si\cap\ka$.
Since $\xi,\th\sub\si$, we of course have $\xi^\sharp\vee\th\sub\si$.  For the reverse inclusion, fix some $(f,g)\in\si$.  We must show that $(f,g)\in\xi^\sharp\vee\th$.  Since $\xi = \si\restr_T\in\Cong(T)$, we have
\begin{equation}\label{eq:siT}
\xi = \nab_T \OR \xi = R_N^T = \nab_{I_{q-1}^T} \cup \nu_N^T \cup \De_T \qquad\text{for some $1\leq q\leq r$, and some $N\normal\S_q$.}
\end{equation}
In the first case, we have $\xi^\sharp = \nab_T^\sharp = \nab_P$ by Lemma \ref{lem:RN}, so certainly $(f,g)\in\xi^\sharp\vee\th$.  For the rest of the proof, we assume that $\xi = R_N^T$, as in \eqref{eq:siT}, so that $\xi^\sharp = R_N^P$ by Lemma \ref{lem:RN}.  We now split the proof into cases, according to whether the pair $(\ol f,\ol g) \in \xi$ belongs to $\De_T$, $\nab_{I_{q-1}^T}$ or $\nu_N^T$; cf.~\eqref{eq:siT}.

\pfcase1
If $(\ol f,\ol g)\in\De_T$, then $\ol f=\ol g$, i.e.~$(f,g)\in\ka$.  But then $(f,g)\in\si\cap\ka = \th \sub\xi^\sharp\vee\th$.

\pfcase2
If $(\ol f,\ol g)\in\nab_{I_{q-1}^T}$, then $\rank(f)=\rank(\ol f)\leq q-1$, and similarly $\rank(g)\leq q-1$.  But then
\[
(f,g) \in \nab_{I_{q-1}^P} \sub R_N^P = \xi^\sharp \sub \xi^\sharp\vee\th.
\]

\pfcase3
Finally, suppose $(\ol f,\ol g)\in\nu_N^T$. Since $\nu_N^T\sub{\H^T}$, it follows that $\ol f \H^T \ol g$, i.e.~that $f\hH^P g$.
By Lemma \ref{lem:th0} (with $H=H_f^P$, $H'=H_g^P$ and $h=f$), we have $(f,f')\in\si$, where $f'$ is the unique element of $H_g^P$ with $\ol f=\ol f'$.  But this means that in fact $(f,f')\in\si\cap\ka = \th$.  We also have $(f',g)\in\si$ by transitivity, and we have $(f',g)\in{\H^P}$ (as $f'\in H_g^P$).  Since $(\ol f',\ol g)=(\ol f,\ol g)\in\nu_N^T$, it follows that $(f',g)\in\nu_N^P \sub R_N^P = \xi^\sharp$.  But then $f \mr\th f' \mr\xi^\sharp g$, so that $(f,g)\in\xi^\sharp\vee\th$.
\epf

\begin{rem}\label{rem:joincomp}
Examining the final line of the three cases above, we showed that in fact the pair $(f,g)\in\si$ belongs to $\th\circ\xi^\sharp$.  Thus, any congruence $\si\in\Cong(P)$ satisfies $\si=\Xi(\si)^\sharp\circ\Th(\si) = \Th(\si) \circ \Xi(\si)^\sharp$.  
\end{rem}

Proposition \ref{prop:join} has the following immediate consequence.

\begin{cor}\label{lem:inj}
$\ker(\Xi)\cap\ker(\Th)=\De_{\Cong(P)}$.  \epfres
\end{cor}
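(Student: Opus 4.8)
The plan is to derive this directly from Proposition~\ref{prop:join}, which is the substantive input; the corollary itself is pure bookkeeping. First I would unwind the statement. The congruence $\ker(\Xi)$ on the lattice $\Cong(P)$ consists of those pairs $(\si,\tau)$ with $\si\restr_T=\tau\restr_T$, and similarly $\ker(\Xi)$ consists of those with $\si\cap\ka=\tau\cap\ka$; the assertion $\ker(\Xi)\cap\ker(\Th)=\De_{\Cong(P)}$ is thus precisely the claim that the pair of maps $(\Xi,\Th)$ separates points of $\Cong(P)$: if $\Xi(\si)=\Xi(\tau)$ and $\Th(\si)=\Th(\tau)$, then $\si=\tau$.

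So I would take $\si,\tau\in\Cong(P)$ with $\Xi(\si)=\Xi(\tau)$ and $\Th(\si)=\Th(\tau)$. The one thing to note explicitly is that the lift operation $\xi\mapsto\xi^\sharp$ is a well-defined function $\Cong(T)\to\Cong(P)$ (it sends $\xi$ to the congruence of $P$ that $\xi$ generates), so $\Xi(\si)=\Xi(\tau)$ forces $\Xi(\si)^\sharp=\Xi(\tau)^\sharp$. Applying Proposition~\ref{prop:join} to each of $\si$ and $\tau$ then gives
\[
\si = \Xi(\si)^\sharp\vee\Th(\si) = \Xi(\tau)^\sharp\vee\Th(\tau) = \tau,
\]
which is the desired conclusion.

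There is no real obstacle here — the entire content has already been extracted in Proposition~\ref{prop:join}, and this corollary merely records the consequence that any congruence of $P$ is recoverable from its restriction to $T$ and its intersection with $\ka$. The only point worth flagging is the purpose of the statement: together with the surmorphism property of $\Xi$ and $\Th$ (established separately), this is exactly the hypothesis $\bigcap_i\ker(\phi_i)=\Delta_L$ needed to invoke Proposition~\ref{pr:sdp} and thereby obtain the subdirect embedding claimed in Theorem~\ref{thm:Psi}.
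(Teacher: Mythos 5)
Your argument is exactly the paper's: the corollary is stated there as an immediate consequence of Proposition~\ref{prop:join}, with precisely the deduction $\si=\Xi(\si)^\sharp\vee\Th(\si)=\Xi(\tau)^\sharp\vee\Th(\tau)=\tau$ that you spell out. The only blemish is a typo in your unwinding of the statement (the second occurrence of ``$\ker(\Xi)$'' should read $\ker(\Th)$); the mathematics is correct.
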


We now bring in the rank parameters associated to congruences from $\Cong(T)$ and $[\De_P,\ka]$, defined in \eqref{eq:rank} and \eqref{eq:rkxi}.

\begin{lemma}\label{lem:etaxizeth}
If $\si\in\Cong(P)$, then with $\xi=\Xi(\si)$ and $\th=\Th(\si)$ we have $\rk(\xi)\leq\rk(\th)$.
\end{lemma}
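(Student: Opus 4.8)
The plan is to deduce the inequality from the lift operation $\xi\mapsto\xi^\sharp$ together with the explicit description of such lifts in Lemma \ref{lem:RN}. Write $q=\rk(\xi)$, so that $R_q^T\sub\xi$ by the definition of the rank parameter in \eqref{eq:rkxi}; note the maximum defining $\rk(\xi)$ is attained, since $R_0^T=\De_T\sub\xi$ always and the parameter is bounded above by $r$. Since $\xi=\si\restr_T\sub\si$ and $\xi^\sharp$ is by definition the least congruence of $P$ containing $\xi$, we have $\xi^\sharp\sub\si$.

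Next I would appeal to Lemma \ref{lem:RN}, more precisely to the identity $(R_q^T)^\sharp=R_q^P$ established as \eqref{eq:RqT} in its proof, which holds for every $0\leq q\leq r$; this covers the boundary case $q=r$ (where both sides are the universal relations) and $q=0$ (where both are the diagonals) as well as the generic cases. Combining this with $R_q^T\sub\xi$ gives $R_q^P=(R_q^T)^\sharp\sub\xi^\sharp\sub\si$. In particular $\ka\cap R_q^P\sub\si$, and since trivially $\ka\cap R_q^P\sub\ka$, we conclude $\ka\cap R_q^P\sub\si\cap\ka=\th$. By the definition of $\rk(\th)$ in \eqref{eq:rank} this yields $\rk(\th)\geq q=\rk(\xi)$, as required.

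I do not expect a genuine obstacle: the argument is a short chain of inclusions, and the only point needing a moment's care is that $(R_q^T)^\sharp=R_q^P$ be available for the specific value $q=\rk(\xi)$ across its whole range $0\leq q\leq r$, which is exactly the content of \eqref{eq:RqT}. One could instead argue directly that $(f,\ol f)\in\si$ for every $f\in I_q^P$ by pushing $\nab_{I_q^T}\sub\si$ around via the congruence property, much as in the proof of Lemma \ref{lem:RN}, but routing through the already-proven Lemma \ref{lem:RN} is cleaner and avoids repeating that work.
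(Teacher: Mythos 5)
Your proposal is correct and follows essentially the same route as the paper: both arguments reduce to the chain $R_q^P=(R_q^T)^\sharp\subseteq\xi^\sharp\subseteq\si$ via Lemma \ref{lem:RN} (equivalently \eqref{eq:RqT}), and then intersect with $\ka$ to conclude $\ka\cap R_q^P\subseteq\th$. The extra care you take over the boundary cases $q=0$ and $q=r$ and the inclusion $\xi^\sharp\subseteq\si$ is harmless elaboration of the same argument.
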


\pf
Write $q=\rk(\xi)$.  By Lemma \ref{lem:RN} we have $R_q^P = (R_q^T)^\sharp \sub \xi^\sharp \sub\si$.  It then follows that $\ka\cap R_q^P\sub\ka\cap\si=\th$, which gives $\rk(\th)\geq q=\rk(\xi)$.  
\epf

Our next goal is to establish a converse of Lemma \ref{lem:etaxizeth}.  Namely, we will show that if $\xi\in\Cong(T)$ and $\th\in[\De_P,\ka]$ satisfy $\rk(\xi)\leq\rk(\th)$, then with $\si=\xi^\sharp\vee\th\in\Cong(P)$ we have $\xi=\Xi(\si)$ and $\th=\Th(\si)$.  We prove the claims regarding $\xi$ and $\th$ in the following two lemmas; for the first, we do not in fact need the assumption $\rk(\xi)\leq\rk(\th)$:

\begin{lemma}\label{lem:xi}
If $\xi\in\Cong(T)$ and $\th\in[\De_P,\ka]$, then with $\si=\xi^\sharp\vee\th$ we have $\xi=\Xi(\si)$.
\end{lemma}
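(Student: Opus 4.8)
We must show $\xi = \Xi(\si) = \si\restr_T$, where $\si = \xi^\sharp\vee\th$ and $\th\in[\De_P,\ka]$. The inclusion $\xi\sub\si\restr_T$ is immediate: $\xi\sub\xi^\sharp\sub\si$ and $\xi$ already consists of pairs of elements of $T$, so $\xi\sub\si\cap(T\times T)=\si\restr_T$. The substance is the reverse inclusion $\si\restr_T\sub\xi$.

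**Key steps.** First I would use Remark \ref{rem:joincomp} (or re-derive the same fact), which tells us that $\si = \xi^\sharp\vee\th = \th\circ\xi^\sharp = \xi^\sharp\circ\th$. So take any $(f,g)\in\si\restr_T$, i.e. $f,g\in T$ and $(f,g)\in\si$. Writing $\si = \xi^\sharp\circ\th$, there is some $h\in P$ with $(f,h)\in\xi^\sharp$ and $(h,g)\in\th$. Now apply the retraction $\phi$: since $\th\sub\ka=\ker(\phi)$, we get $\ol h=\ol g=g$ (as $g\in T$). And since $\phi$ is a homomorphism, $(f,h)\in\xi^\sharp$ gives $(\ol f,\ol h)\in\ol{\xi^\sharp}=\xi^\sharp\restr_T=\xi$ by Lemma \ref{la:reli}\ref{it:reli2}; here $\ol f=f$ as $f\in T$, so $(f,\ol h)=(f,g)\in\xi$. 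That is exactly what we need.

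Alternatively, if one prefers to avoid Remark \ref{rem:joincomp}, the same argument works with the join replaced by any finite alternating composition of $\xi^\sharp$ and $\th$: applying $\phi$ to such a chain, every $\th$-step collapses (being inside $\ker\phi$) and every $\xi^\sharp$-step projects into $\xi$ by Lemma \ref{la:reli}\ref{it:reli2}, so the images of the two endpoints are $\xi$-related; when both endpoints lie in $T$ they equal their own images, giving $(f,g)\in\xi$.

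**Main obstacle.** There is essentially no obstacle here — this lemma is the "easy half" of the rank-compatibility analysis, and the hypothesis $\rk(\xi)\le\rk(\th)$ is (correctly, as the statement notes) not needed. The only point requiring a little care is making sure we invoke the composition description $\si=\xi^\sharp\circ\th=\th\circ\xi^\sharp$ correctly (Remark \ref{rem:joincomp}), rather than merely $\si=\xi^\sharp\vee\th$; with a bare join one would have to iterate the collapsing argument over an arbitrarily long alternating chain, which works but is slightly more verbose. Everything else is a direct application of the fact that $\phi$ is a homomorphism with kernel $\ka\supseteq\th$ together with Lemma \ref{la:reli}.
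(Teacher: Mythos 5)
Your fallback argument (the alternating-chain version) is correct and is essentially the paper's own proof: write a pair $(f,g)\in\si\restr_T$ as the endpoints of a finite chain of $\xi^\sharp$- and $\th$-steps, apply $\phi$, note that the $\th$-steps collapse because $\th\sub\ka=\ker(\phi)$ while the $\xi^\sharp$-steps land in $\xi^\sharp\restr_T=\xi$ by Lemma \ref{la:reli}\ref{it:reli2}, and finish using $f=\ol f$ and $g=\ol g$. Had you led with that, there would be nothing further to say.

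Your preferred route, however, is circular. Remark \ref{rem:joincomp} asserts that $\si=\Xi(\si)^\sharp\circ\Th(\si)=\Th(\si)\circ\Xi(\si)^\sharp$ for a congruence $\si$; it does \emph{not} assert that $\xi^\sharp\vee\th=\xi^\sharp\circ\th$ for an arbitrary pair $\xi\in\Cong(T)$, $\th\in[\De_P,\ka]$ with $\si=\xi^\sharp\vee\th$. At this point of the argument you do not yet know that $\Xi(\si)=\xi$ --- that is precisely the statement being proved --- nor that $\Th(\si)=\th$, which is Lemma \ref{lem:th} and genuinely requires the hypothesis $\rk(\xi)\leq\rk(\th)$ that Lemma \ref{lem:xi} deliberately omits (without it, $\Th(\si)$ can be strictly larger than $\th$). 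If you apply the remark as actually stated, the factor occurring in the composition is $\Xi(\si)^\sharp$, and your computation only yields $(f,g)\in\Xi(\si)$, which is the definition of $(f,g)\in\si\restr_T$ and establishes nothing. So the shortcut must be discarded, and the ``slightly more verbose'' chain argument you relegate to an aside is in fact the proof.
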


\pf
Recall that $\Xi(\si)=\si\restr_T$.
Certainly $\xi\sub\si\restr_T$.  For the reverse inclusion, suppose $(f,g)\in\si\restr_T$; we must show that $(f,g)\in\xi$.  By assumption we have $f,g\in T$ and $(f,g)\in\si=\xi^\sharp\vee\th$.  It follows that there is a sequence $f = f_0 \to f_1 \to\cdots\to f_k = g$, where each $(f_i,f_{i+1})\in\xi^\sharp\cup\th$.  Since $f,g\in T$, we have $f = \ol f = \ol f_0$ and $g = \ol g = \ol f_k$, so we can complete the proof that $(f,g)\in\xi$ by showing that $(\ol f_i,\ol f_{i+1})\in\xi$ for each $0\leq i<k$.  But for any such $i$, we have
\[
(\ol f_i,\ol f_{i+1}) = (a\star f_i\star a,a\star f_{i+1}\star a)\in\xi^\sharp\cup\th,
\]
as $\xi^\sharp$ and $\th$ are both compatible.  In fact, since $\ol f_i,\ol f_{i+1}\in T$, we have 
\[
(\ol f_i,\ol f_{i+1}) \in \xi^\sharp\restr_T \cup \th\restr_T  = \xi \cup \De_T = \xi,
\]
where we used Lemma \ref{la:reli}\ref{it:reli2} and $\th\restr_T\sub\ka\restr_T=\De_T$ in the second step.
\epf

\begin{lemma}\label{lem:th}
If $\xi\in\Cong(T)$ and $\th\in[\De_P,\ka]$ satisfy $\rk(\xi)\leq\rk(\th)$, then with $\si=\xi^\sharp\vee\th$ we have $\th=\Th(\si)$.
\end{lemma}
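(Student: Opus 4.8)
The plan is to prove the two inclusions $\th\subseteq\si\cap\ka$ and $\si\cap\ka\subseteq\th$ separately; the first is immediate, since $\th\subseteq\si=\xi^\sharp\vee\th$ and $\th\subseteq\ka$ by hypothesis, so all the work is in $\si\cap\ka\subseteq\th$. I would first dispose of two degenerate cases using Theorem~\ref{thm:Malcev}. If $\xi=\nab_T$ then $\rk(\xi)=r$, so the hypothesis gives $\ka=\ka\cap R_r^P\subseteq\th$, hence $\th=\ka$, and then $\si=\nab_T^\sharp\vee\ka=\nab_P$ by Lemma~\ref{lem:RN}, so $\si\cap\ka=\ka=\th$. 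If $\xi=R_N^T$ with $N\normal\S_q$ and $q=1$, then $\xi=\De_T$, so $\xi^\sharp=\De_P$ (Lemma~\ref{lem:RN}) and $\si=\th$, again giving $\si\cap\ka=\th$. So the real case is $\xi=R_N^T$ with $q\geq2$, where $\rk(\xi)=q-1$, so the hypothesis yields $\ka\cap R_{q-1}^P\subseteq\th$, while $\xi^\sharp=R_N^P=\nab_{I_{q-1}^P}\cup\nu_N^P\cup\De_P$ by Lemma~\ref{lem:RN}. As a preliminary observation I would record that, since $\nu_N^P\subseteq{\H^P}$ and ${\H^P}\cap\ka=\De_P$ (Lemma~\ref{lem:DE}\ref{DE1}), one has $\xi^\sharp\cap\ka=R_{q-1}^P\cap\ka\subseteq\th$.

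To handle $\si\cap\ka\subseteq\th$ in this case, take $(f,g)\in\si\cap\ka$, so $\ol f=\ol g$ and there is a chain $f=f_0\to\cdots\to f_k=g$ with each consecutive pair in $\xi^\sharp\cup\th=\nab_{I_{q-1}^P}\cup\nu_N^P\cup\th$. The first step is a rank analysis. Since $\rank$ is constant on $\ka$-classes (because $\rank(f)=\rank(\ol f)$ for $f\in P$) and $f\mr\ka g$, we have $\rank(f_0)=\rank(f_k)=:m$; moreover a $\th$-step preserves rank, a $\nu_N^P$-step keeps both endpoints of rank $q$, and a $\nab_{I_{q-1}^P}$-step keeps both endpoints of rank $\leq q-1$. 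Hence, if $m>q$, an easy induction along the chain shows every step is a $\th$-step, so $(f,g)\in\th$; if $m\leq q-1$, then $(f,g)\in\ka\cap R_{q-1}^P\subseteq\th$. This leaves $m=q$, and the same induction then shows that the whole chain lies in $D_q^P$ and every step is a $\nu_N^P$- or $\th$-step.

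For this remaining case I would make two reductions. Using Lemma~\ref{lem:fge} with an idempotent $e\in L_f^P$ and $h:=g\star e$, one gets $(f,h)\in\si$ with $h\in L_f^P\cap R_g^P$ and $\ol h=\ol g\star\ol e=\ol g$ (as $\ol e$ is an idempotent that is $\L^T$-related to $\ol f=\ol g$), hence $(f,h),(h,g)\in\si\cap\ka$ with $f\L^P h$ and $h\R^P g$; so it suffices to prove $(f,g)\in\th$ under the extra assumption $f\R^P g$, the $\L^P$ case being dual. Secondly, since $D_q^P$ is a stable regular $\D$-class with $R_N^P\subseteq\si$ and $\ol\si=\si\restr_T=\xi=R_N^T$ (Lemmas~\ref{la:reli} and \ref{lem:xi}), Lemma~\ref{lem:28} forces $\si\cap{\H^P}\restr_{D_q^P}=\nu_N^P$ exactly: the normal subgroup it produces must be $N$, because any pair $(\ol e,\ol h)\in\ol\si\cap{\H^T}$ lies in $\nu_N^T$. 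In particular $\si$ creates no $\H^P$-related pairs beyond those already in $R_N^P$.

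The main obstacle is then to bridge from "$(f,g)\in\si$" to "$(f,g)\in\th$" for $f\R^P g$, $\ol f=\ol g$, $f,g\in D_q^P$. Here I would run the $\nu_N^P$/$\th$-chain joining $f$ to $g$ inside the single $\hH^P$-class $\wh H_f^P$ (every $f_i$ lies there, since its $\phi$-image is $\nu_N^T$-, hence $\H^T$-, related to $\ol f$), and exploit the description of $\wh H_f^P$ as a rectangular group from Lemma~\ref{lem:DE}, on which $\phi$ projects onto the group $H_{\ol f}^T\cong\S_q$. Along the chain the $\phi$-image is moved only by the $\nu_N^P$-steps, by elements of $N$, and returns to $\ol f=\ol g$; combining this with $\si\cap{\H^P}\restr_{D_q^P}=\nu_N^P$, with Green's Lemma, and with Lemma~\ref{lem:th0} (which controls how $\si$ can relate two distinct $\H^P$-classes of $\wh H_f^P$), I would straighten the chain so that the $\nu_N^P$-steps are absorbed, leaving an all-$\th$ chain from $f$ to $g$, whence $(f,g)\in\th$. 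I expect this final straightening to be the technical heart of the argument: the delicate point is to verify that the replacement steps are genuine $\th$-steps (not merely $\si$-steps) and that they still join $f$ to $g$, so that the pair really lands in $\th$.
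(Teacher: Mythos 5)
Your rank analysis, your two degenerate cases, and your isolation of the hard case ($f,g\in D_q^P$ with the connecting chain consisting of $\nu_N^P$- and $\th$-steps) all match the paper and are correct. But the final paragraph is not a proof: you explicitly defer the ``straightening'' of the chain, and you correctly identify the exact point where your plan is incomplete --- verifying that the replacement steps are genuine $\th$-steps rather than $\si$-steps. That is a genuine gap, and it is precisely the step the paper's proof is built to deliver. The machinery you propose to invoke there (the rectangular-group structure of $\wh H_f^P$ from Lemma~\ref{lem:DE}, the computation $\si\cap{\H^P}\restr_{D_q^P}=\nu_N^P$ via Lemma~\ref{lem:28}, and the reduction to $f\R^P g$ via Lemma~\ref{lem:fge}) does not obviously close it, because all of these only give you control over $\si$, whereas the conclusion must land in $\th$.

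The paper closes the gap with a projection trick that bypasses all three of those reductions. For each element $f_i$ of the chain, let $h_i$ be the unique element of $H_{f_i}^P$ with $\ol h_i=\ol f$ (uniqueness by Lemma~\ref{lem:DE}\ref{DE1}). Then $h_0=f$ and $h_k=g$ because $\ol f=\ol g$, so it suffices to show $(h_i,h_{i+1})\in\th$ for each $i$. A $\xi^\sharp$-step between elements of rank $>\rk(\xi)$ lies in $\nu_N^P\cup\De_P\sub{\H^P}$, so $H_{f_i}^P=H_{f_{i+1}}^P$ and $h_i=h_{i+1}$; the $\nu_N^P$-steps are thus absorbed for free. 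For a $\th$-step, one applies Lemma~\ref{lem:th0} \emph{to the congruence $\th$ itself} (with $H=H_{f_i}^P$, $H'=H_{f_{i+1}}^P$, $h=h_i$): since $\th\cap(H\times H')\neq\es$, the lemma yields $(h_i,h_{i+1})\in\th$ directly, which resolves your ``$\th$-steps versus $\si$-steps'' worry --- the lemma is never invoked for $\si$ in this part of the argument. Concatenating gives $(f,g)=(h_0,h_k)\in\th$. If you rework your last paragraph along these lines, the rest of your write-up stands.
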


\pf
Recall that $\Th(\si)=\si\cap\ka$.
For the duration of the proof we write $q=\rk(\xi)\leq\rk(\th)$.  Since $\th\sub\si$ and $\th\sub\ka$, we certainly have $\th\sub\si\cap\ka$, so we are left to establish the reverse containment.  This is trivial in the case $q=r$, as then $\rk(\th)=r$, and so $\th= \ka \supseteq\si\cap\ka$.
Thus, for the rest of the proof we assume that $q<r$, and we fix some $(f,g)\in\si\cap\ka$; we must show that $(f,g)\in\th$.

Since $\rk(\xi)=q<r$, we have $\xi=R_N^T$ for some $N\normal\S_{q+1}$.  Since $(\ol f,\ol g)\in\si\restr_T=\Xi(\si)=\xi$ (by Lemma~\ref{lem:xi}), it follows from the form of $\xi=R_N^T$ that either $\ol f,\ol g\in I_q^T$ or else $\ol f,\ol g\not\in I_q^T$, and in the latter case we have $\rank(\ol f)=\rank(\ol g)$ and $\ol f\H^T\ol g$.  Since $\rank(f)=\rank(\ol f)$ and $\rank(g)=\rank(\ol g)$, it follows that either $f,g\in I_q^P$ or else $f,g\not\in I_q^P$, and in the latter case we have $\rank(f)=\rank(g)$ and~${f\hH^Pg}$.

\pfcase1  Suppose first that $f,g\in I_q^P$.  Together with the fact that $(f,g)\in\ka$, it follows that
\[
(f,g) \in \ka \cap R_q^P \sub \th,
\]
where in the last step we used the fact that $\rk(\th)\geq q$.

\pfcase2  Now suppose $f,g\not\in I_q^P$, and let $p=\rank(f)=\rank(g)$.  Since $(f,g)\in\si=\xi^\sharp\vee\th$, there is a sequence $f = f_0 \to f_1 \to\cdots\to f_k = g$, where each $(f_i,f_{i+1})\in\xi^\sharp \cup \th$.  Now consider the sequence of $\ol f_i$ maps: $\ol f = \ol f_0 \to \ol f_1 \to\cdots\to \ol f_k = \ol g$.  For each $i$ we have $(f,f_i)\in(\xi^\sharp\cup\th)^\sharp = \xi^\sharp\vee\th = \si$, so $(\ol f,\ol f_i)\in\si\restr_T = \xi = R_N^T$ by Lemma \ref{lem:xi}.  Since ${\rank(\ol f)=\rank(f)=p>q=\rk(\xi)}$, we have $p=\rank(\ol f_i) = \rank(f_i)$ for all $i$.  We also have $\ol f\H^T\ol f_i$, and hence $f\hH^Pf_i$ for all $i$.

For each $0\leq i\leq k$, let $h_i$ be the unique element of $H_{f_i}^P$ such that $\ol h_i=\ol f$.  Since $\ol f=\ol g$ (as $(f,g)\in\ka$), we have $h_0=f_0=f$ and $h_k=f_k=g$, so it suffices to show that $(h_i,h_{i+1})\in\th$ for all $0\leq i<k$.  This follows from Lemma \ref{lem:th0} in the case that $(f_i,f_{i+1})\in\th$.  Keeping $(f_i,f_{i+1})\in\xi^\sharp\cup\th$ in mind, it remains to consider the case in which $(f_i,f_{i+1})\in\xi^\sharp=R_N^P$.  Since $\rank(f_i)=\rank(f_{i+1})=p>q$, it follows from the form of $\xi^\sharp=R_N^P$ that $f_i \H^P f_{i+1}$, i.e.~that $H_{f_i}^P=H_{f_{i+1}}^P$.  It follows that $h_i=h_{i+1}$ in this case, so certainly $(h_i,h_{i+1})\in\th$.
\epf

Here is the final missing piece of the proof of Theorem \ref{thm:Psi}.

\begin{lemma}\label{lem:sur}
$\Xi$ and $\Th$ are lattice surmorphisms.
\end{lemma}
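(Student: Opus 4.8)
We must show that the two maps $\Xi,\Th\colon\Cong(P)\to\Cong(T),[\De_P,\ka]$ of \eqref{eq:XiTh} are lattice surmorphisms.  The plan is to treat surjectivity and the homomorphism property separately, and in each case to reduce the statement about $\Cong(P)$ to the already-established machinery of lifts $(\cdot)^\sharp$, Proposition \ref{prop:join}, and Lemmas \ref{la:reli}, \ref{lem:RN}, \ref{lem:xi}, \ref{lem:th}.

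\textbf{Surjectivity.}  For $\Th$: given any $\th\in[\De_P,\ka]$, note that $\Th(\th)=\th\cap\ka=\th$ since $\th\sub\ka$, so $\Th$ is onto.  For $\Xi$: given $\xi\in\Cong(T)$, Lemma \ref{la:reli}\ref{it:reli2} says $\xi=\xi^\sharp\restr_T=\Xi(\xi^\sharp)$, so $\Xi$ is onto.  (Alternatively, combine the case $\th=\De_P$ of Lemma \ref{lem:xi} with the observation that $\rk$ is monotone enough; but the one-line argument via Lemma \ref{la:reli}\ref{it:reli2} is cleanest.)

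\textbf{Monotonicity and meets.}  Both $\Xi$ and $\Th$ are clearly order-preserving, since $\si\sub\si'$ gives $\si\restr_T\sub\si'\restr_T$ and $\si\cap\ka\sub\si'\cap\ka$.  For meets this immediately yields $\Xi(\si\cap\si')\sub\Xi(\si)\cap\Xi(\si')$, and conversely $\Xi(\si)\cap\Xi(\si')=(\si\cap T^2)\cap(\si'\cap T^2)=(\si\cap\si')\cap T^2=\Xi(\si\cap\si')$; the same computation (replacing $T\times T$ by $\ka$) handles $\Th$.  So both maps preserve meets directly, without needing the lift construction.

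\textbf{Joins.}  This is the step I expect to be the main obstacle, and where Proposition \ref{prop:join} does the real work.  Again $\supseteq$ is automatic by monotonicity, so the content is $\Xi(\si\vee\si')\sub\Xi(\si)\vee\Xi(\si')$ and $\Th(\si\vee\si')\sub\Th(\si)\vee\Th(\si')$.  Write $\xi=\Xi(\si)$, $\xi'=\Xi(\si')$, $\th=\Th(\si)$, $\th'=\Th(\si')$.  By Proposition \ref{prop:join}, $\si=\xi^\sharp\vee\th$ and $\si'=\xi'^\sharp\vee\th'$, hence
\[
\si\vee\si' = \xi^\sharp\vee\xi'^\sharp\vee\th\vee\th' = (\xi\vee\xi')^\sharp\vee(\th\vee\th'),
\]
where the last equality uses that $(\cdot)^\sharp$ preserves joins (it is the left adjoint of $\Xi$, equivalently the generated-congruence operator applied to a join of generating sets is the join of the generated congruences).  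Now apply $\Xi$ to $\si\vee\si'=(\xi\vee\xi')^\sharp\vee(\th\vee\th')$ and use Lemma \ref{lem:xi} with the pair $(\xi\vee\xi',\ \th\vee\th')\in\Cong(T)\times[\De_P,\ka]$: this gives $\Xi(\si\vee\si')=\xi\vee\xi'=\Xi(\si)\vee\Xi(\si')$.  For $\Th$ the analogous step needs Lemma \ref{lem:th}, whose hypothesis is $\rk(\xi\vee\xi')\le\rk(\th\vee\th')$; this holds because $\rk(\xi)\le\rk(\th)$ and $\rk(\xi')\le\rk(\th')$ by Lemma \ref{lem:etaxizeth}, and $\rk$ of a join of Rees-type congruences is the max of the $\rk$'s (immediate from \eqref{eq:rank}, \eqref{eq:rkxi}, since $R_q^S\sub\xi\vee\xi'$ iff $R_q^S\sub\xi$ or $R_q^S\sub\xi'$, as the $R_q^S$ form a chain).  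Then Lemma \ref{lem:th} gives $\Th(\si\vee\si')=\th\vee\th'=\Th(\si)\vee\Th(\si')$, completing the proof.  The one routine gap to fill carefully is the claim that $(\cdot)^\sharp$ preserves binary joins, which follows since for congruences $\al,\be\in\Cong(T)$ the congruence on $P$ generated by $\al\cup\be$ equals the join in $\Cong(P)$ of those generated by $\al$ and by $\be$ separately.
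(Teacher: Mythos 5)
Your proof is correct and follows essentially the same route as the paper: surjectivity via Lemma \ref{la:reli}\ref{it:reli2} and the fixed-point observation, meets by direct computation with intersections, and joins via Proposition \ref{prop:join} together with Lemmas \ref{lem:etaxizeth}, \ref{lem:xi} and \ref{lem:th}. The only (cosmetic) difference is at the identity $(\xi\vee\xi')^\sharp=\xi^\sharp\vee\xi'^\sharp$: the paper gets this for free by using that $\Cong(T)$ is a chain to assume $\xi_1\sub\xi_2$ (which also makes $\rk(\xi_1\vee\xi_2)=\rk(\xi_2)$ immediate), whereas you invoke the general fact that the lift operator preserves joins --- valid, but your one-line justification of $\rk(\xi\vee\xi')=\max(\rk(\xi),\rk(\xi'))$ via ``the $R_q^S$ form a chain'' is cleaner if instead phrased via the chain property of $\Cong(T)$ itself.
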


\pf
Surjectivity of $\Xi$ follows from Lemma \ref{la:reli}\ref{it:reli2}, which says that $\Xi(\xi^\sharp)=\xi$ for all $\xi\in\Cong(T)$.  Surjectivity of $\Th$ follows from the fact that $\Th(\th)=\th$ for all $\th\in[\De_P,\ka](\sub\Cong(P))$.

It remains to show that both maps are lattice morphisms.  To do so, let $\si_1,\si_2\in\Cong(P)$, and write $\xi_i=\Xi(\si_i)$ and $\th_i=\Th(\si_i)$ for $i=1,2$.  We need to show that
\ben\bmc2
\item \label{Zh1} $\Xi(\si_1\cap\si_2) = \xi_1\cap\xi_2$,
\item \label{Zh2} $\Xi(\si_1\vee\si_2) = \xi_1\vee\xi_2$,
\item \label{Zh3} $\Th(\si_1\cap\si_2) = \th_1\cap\th_2$,
\item \label{Zh4} $\Th(\si_1\vee\si_2) = \th_1\vee\th_2$.
\emc\een
Items \ref{Zh1} and \ref{Zh3} follow quickly from the fact that $\Xi$ and $\Th$ are defined in terms of intersections, namely $\Xi(\si)=\si\restr_T=\si\cap(T\times T)$ and $\Th(\si)=\si\cap\ka$.  

For \ref{Zh2} and \ref{Zh4}, we may assume without loss of generality that $\xi_1\sub\xi_2$, as $\Cong(T)$ is a chain.  Since then $\xi_1^\sharp\sub\xi_2^\sharp$, it follows that $\xi_1^\sharp\vee\xi_2^\sharp = \xi_2^\sharp =(\xi_1\vee\xi_2)^\sharp$.  Combining this with Proposition \ref{prop:join} gives
\begin{equation}\label{eq:si1si2}
\si_1\vee\si_2 = (\xi_1^\sharp\vee\th_1)\vee(\xi_2^\sharp\vee\th_2) 
%= (\xi_1^\sharp\vee\xi_2^\sharp) \vee (\th_1\vee\th_2) 
= (\xi_1\vee\xi_2)^\sharp \vee (\th_1\vee\th_2),
\end{equation}
with $\xi_1\vee\xi_2\in\Cong(T)$ and $\th_1\vee\th_2\in[\De_P,\ka]$.  Item \ref{Zh2} now follows immediately from Lemma \ref{lem:xi}.  Next we note that
\begin{align*}
\rk(\xi_1\vee\xi_2) &= \rk(\xi_2) &&\text{as $\xi_1\sub\xi_2$}\\
&\leq \rk(\th_2) &&\text{by Lemma \ref{lem:etaxizeth}}\\
&\leq \rk(\th_1\vee\th_2) &&\text{by definition, as $\th_2\sub\th_1\vee\th_2$.}
\end{align*}
Item~\ref{Zh4} now follows from \eqref{eq:si1si2} and Lemma \ref{lem:th}
\epf

\begin{rem}\label{rem:Psi}
One can use Theorem \ref{thm:Psi} to give a schematic diagram of the lattice $\Cong(P)$.  First, we identify this lattice with its image in $\Cong(T) \times [\De_P,\ka]$, which we will denote by $\Lam$.  We then break up $\Lam$ into what we will call \emph{layers}.  Each such layer is a sublattice consisting of all pairs with a fixed first coordinate:
\[
\Lam_\xi = \set{(\xi,\th)}{\th\in[\De_P,\ka],\ \rk(\th) \geq q}\qquad\text{for $\xi\in\Cong(T)$ with $q=\rk(\xi)$}.
\]
Note that for such $\xi$ we have
\[
\Lambda_\xi\cong [\ka_q,\ka] ,\WHERE \kappa_q=\kappa\cap R_q^P.
\]
These layers are then stacked on top of each other in the order
\[
\Lam_{\De_T} < \Lam_{R_1} < \Lam_{R_{\S_2}} < \Lam_{R_2} < \cdots < \Lam_{\nab_T}.
\]
The stacking of two consecutive layers $\Lambda_{\xi_1}<\Lambda_{\xi_2}$ is such that every element $(\xi_2,\theta)$ of $\Lambda_{\xi_2}$ covers the corresponding element $(\xi_1,\theta)$ of $\Lambda_{\xi_1}$.
This is illustrated in Figure \ref{fig:lattice}, in the case $r=3$.  

Note that Figure \ref{fig:lattice0} shows a special case of this, when $X=\{1,2,3,4\}$, and $a=\trans{1&2&3&4\\1&2&3&3}$.  The red and blue vertices are included in both figures to show the matching of certain congruences of $\T_X^a$ (in Figure \ref{fig:lattice0}) with their corresponding pairs from $\Cong(T)\times[\De_P,\ka]$ (in Figure \ref{fig:lattice}).  Specifically, the blue and red vertices in Figure \ref{fig:lattice0} correspond to the bottom and top elements of the layers, i.e.~to the congruences $\xi^\sharp\vee\ka_q$ and $\xi^\sharp\vee\ka$, respectively, where $q=\rk(\xi)$.  See also Remark \ref{rem:lamrho} and Figure \ref{fig:lattice1}.
\end{rem}

\nc\ellipsebit[3]{\draw[fill=green!50] (#1,#2*\xx) ellipse (#3 and #3*\sca*\scaa);}

\nc\ellipsebottom[2]{\draw[fill=gray!50] (#1,#2*\xx) ellipse (4 and 4*\sca*\scaa);}
\nc\ellipsemiddle[2]{\draw[fill=gray!50] (#1,#2*\xx) ellipse (3 and 3*\sca*\scaa*\scaa);}
\nc\ellipsetop[2]{\draw[fill=gray!50] (#1,#2*\xx) ellipse (2 and 2*\sca*\scaa*\scaa);}
\nc\wallbottom[2]{\draw[fill=gray!30] (#1+1,#2*\xx) ellipse (3 and 3*\sca*\scaa*\scaa);
\fill[gray!30] (#1+1,#2*\xx) -- (#1+1+2,#2*\xx);
}

\begin{figure}[ht]
\begin{center}
\scalebox{0.86}{
\begin{tikzpicture} [scale=0.7]
\small 
\begin{scope}
\draw[fill=gray!30] (0,1) ellipse (2 and 4);
\draw (0,2) ellipse (1.5 and 3);
\draw (0,3) ellipse (1 and 2);
\foreach \x in {5,1,-1,-3} {\fill (0,\x)circle(.1); \draw[-{latex}] (-2.5,\x)--(-.5,\x);}
\foreach \x/\y in {
5/\ka=\ka_3,
1/\ka_2,
-1/\ka_1,
-3/\De_P=\ka_0
} 
{\node[left] () at (-2.5,\x) {$\y$};}
\end{scope}
\begin{scope}[shift={(14,-3)}]
\nc\xx{2}
\nc\sca{.4}
\nc\scaa{.7}
\draw[fill=gray!50] (0,0) ellipse (5 and 5*\sca);
\draw[fill=gray!30] (1,0) ellipse (4 and 4*\sca*\scaa);
\fill[gray!30] (-3,2)--(-3,0)--(5,0)--(5,2);
\draw (-3,2)--(-3,0) (5,0)--(5,2);
\draw[fill=gray!50] (1,0+\xx) ellipse (4 and 4*\sca*\scaa);
\draw[fill=gray!30] (1,+\xx) ellipse (4 and 4*\sca*\scaa);
\fill[gray!30] (-3,2+\xx)--(-3,0+\xx)--(5,0+\xx)--(5,2+\xx);
\draw (-3,2+\xx)--(-3,0+\xx) (5,0+\xx)--(5,2+\xx);
\draw[fill=gray!50] (1,0+\xx+\xx) ellipse (4 and 4*\sca*\scaa);
\draw[fill=gray!30] (2,0+\xx+\xx) ellipse (3 and 3*\sca*\scaa*\scaa);
\fill[gray!30] (-1,4+\xx)--(-1,2+\xx)--(5,2+\xx)--(5,4+\xx);
\draw (-1,4+\xx)--(-1,2+\xx) (5,2+\xx)--(5,4+\xx);
\draw[fill=gray!50] (2,0+\xx+\xx+\xx) ellipse (3 and 3*\sca*\scaa*\scaa);
\draw[fill=gray!30] (2,0+\xx+\xx+\xx) ellipse (3 and 3*\sca*\scaa*\scaa);
\fill[gray!30] (-1,4+\xx+\xx)--(-1,2+\xx+\xx)--(5,2+\xx+\xx)--(5,4+\xx+\xx);
\draw (-1,4+\xx+\xx)--(-1,2+\xx+\xx) (5,2+\xx+\xx)--(5,4+\xx+\xx);
\draw[fill=gray!50] (2,0+\xx+\xx+\xx+\xx) ellipse (3 and 3*\sca*\scaa*\scaa);
\draw[fill=gray!30] (2,0+\xx+\xx+\xx+\xx) ellipse (3 and 3*\sca*\scaa*\scaa);
\fill[gray!30] (-1,4+\xx+\xx+\xx)--(-1,2+\xx+\xx+\xx)--(5,2+\xx+\xx+\xx)--(5,4+\xx+\xx+\xx);
\draw (-1,4+\xx+\xx+\xx)--(-1,2+\xx+\xx+\xx) (5,2+\xx+\xx+\xx)--(5,4+\xx+\xx+\xx);
\draw[fill=gray!50] (2,0+\xx+\xx+\xx+\xx+\xx) ellipse (3 and 3*\sca*\scaa*\scaa);
\draw (5,10)--(5,12);
\foreach \x/\y in {
0/{(\De_T,\ka)},
2/{(R_1,\ka)},
4/{(R_{\S_2},\ka)},
6/{(R_2,\ka)},
8/{(R_{\A_3},\ka)},
10/{(R_{\S_3},\ka)},
12/{(\nab_T,\ka)}
} {\fill [red] (5,\x)circle(.15); \fill [white] (5,\x)circle(.1); \draw[-{latex}] (7,\x)--(5.25,\x); \node[right] () at (7,\x) {$\y$};}
\foreach \x/\y/\z in {
0/0/{(\De_P,\ka_0)},
2/2/{(R_1,\ka_1)},
2/4/{(R_{\S_2},\ka_1)},
4/6/{(R_2,\ka_2)},
4/8/{(R_{\A_3},\ka_2)},
4/10/{(R_{\S_3},\ka_2)}
} {\fill [blue] (-5+\x,\y)circle(.15); \fill [white] (-5+\x,\y)circle(.1); \draw[-{latex}] (-7,\y)--(-5.25+\x,\y); \node[left] () at (-7,\y) {$\z$};}
\foreach \x/\y/\z in {
2/0/{(\De_P,\ka_1)},
4/4/{(R_{\S_2},\ka_2)}
} {\fill (-5+\x,\y)circle(.1); \draw[-{latex}] (-3+\x,\y)--(-4.75+\x,\y); \node[right] () at (-3+\x,\y){$\z$};}
\end{scope}
\end{tikzpicture}
}
\caption{Structure of the lattice $\Cong(P)$ when $\rank(a)=3$, as discussed in Remark \ref{rem:Psi}. The left-hand side represents the interval $[\Delta_P,\kappa]$, and its distingushed congruences $\kappa_q=\ka\cap R_q^P$. The right-hand side indicates the stacking of layers.}
\label{fig:lattice}
\end{center}
\end{figure}

\section{\boldmath A direct decomposition of the interval $[\De_P,\ka]$}\label{subsect:De}

We have now decomposed $\Cong(P)$ as a subdirect product of $\Cong(\T_r)$ and the interval $[\De_P,\ka]$ in~$\Cong(P)$.  The lattice $\Cong(\T_r)$ is well understood, thanks to Theorem \ref{thm:Malcev}, so we now turn to the task of understanding the interval $\iK=[\De_P,\ka]$, thereby proving Theorem~\ref{thm:main}\ref{it:main2}:  

\begin{thm}
\label{th:dk}
The mapping
\[
[\De_P,\ka] \to [\De_P,\lam]\times[\De_P,\rho]: \th\mt(\th\cap\lam,\th\cap\rho)
\]
is a lattice isomorphism.  
\end{thm}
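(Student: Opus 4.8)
The plan is to apply the direct-product criterion of Proposition \ref{pr:dp} to the lattice $L = [\De_P,\ka]$, with the two surmorphisms $\th\mapsto\th\cap\lam$ onto $L_1=[\De_P,\lam]$ and $\th\mapsto\th\cap\rho$ onto $L_2=[\De_P,\rho]$. There are four things to verify: that each of these maps is a well-defined lattice surmorphism, that the intersection of their kernels is trivial, and that the composite of their kernels is the universal relation on $L$.

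\textbf{Step 1: the maps are lattice surmorphisms.} Each map is intersection with a fixed congruence ($\lam$ or $\rho$), so it preserves meets trivially. Surjectivity is immediate since $\th\cap\lam = \th$ for $\th\in[\De_P,\lam]$, and likewise for $\rho$. Preservation of joins requires a short argument: for $\th_1,\th_2\in[\De_P,\ka]$, one inclusion $(\th_1\vee\th_2)\cap\lam \supseteq (\th_1\cap\lam)\vee(\th_2\cap\lam)$ is trivial, and the reverse will follow once we know that $\lam$ and $\rho$ are complementary factors of $\ka$. Here the key structural fact to establish first is that $\lam\vee\rho = \ka$ and $\lam\cap\rho = \De_P$ inside $\Cong(P)$; then $\lam$ is the kernel of the surmorphism $[\De_P,\ka]\to[\De_P,\rho]$ up to the correspondence theorem, and distributivity-type manipulations in the modular lattice $\Cong(P)$ finish join-preservation. (Recall $\lam=\ka\cap{\L^P}$ and $\rho=\ka\cap{\R^P}$, so $\lam\cap\rho = \ka\cap{\L^P}\cap{\R^P} = \ka\cap{\H^P}$, and this should be shown to be $\De_P$ using Lemma \ref{lem:DE}: on an $\hH^P$-class that is a rectangular group, the restriction of $\ka$ to an $\H^P$-class is trivial because $\phi$ is injective on $\H^P$-classes by Lemma \ref{lem:DE}\ref{DE1}.)

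\textbf{Step 2: $\ker(\phi_1)\cap\ker(\phi_2)=\De_L$.} If $\th_1\cap\lam = \th_2\cap\lam$ and $\th_1\cap\rho = \th_2\cap\rho$, we must show $\th_1=\th_2$. Given $(f,g)\in\th_1\subseteq\ka$, so $\ol f=\ol g$, the goal is to factor the pair through $\lam$ and $\rho$: produce an element $h$ with $(f,h)\in\th_1\cap\rho$ (equivalently $h$ is $\R^P$-related to $f$) and $(h,g)\in\th_1\cap\lam$ (equivalently $h$ is $\L^P$-related to $g$). Since $\ol f=\ol g$ and $\phi$ restricts to a bijection on each $\H^P$-class, $f$ and $g$ are $\D^P$-related (indeed $\hH^P$-related), so by stability $R_f^P\cap L_g^P$ is non-empty; Lemma \ref{lem:237} or a direct construction using an idempotent gives the required $h$ in this intersection. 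Then $(f,h),(h,g)\in\ka$ as well (both have the same image under $\phi$), so $(f,h)\in\th_1\cap\rho=\th_2\cap\rho\subseteq\th_2$ and $(h,g)\in\th_1\cap\lam=\th_2\cap\lam\subseteq\th_2$, whence $(f,g)\in\th_2$. By symmetry $\th_1=\th_2$.

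\textbf{Step 3: $\ker(\phi_1)\circ\ker(\phi_2)=\nab_L$.} Given arbitrary $\th_1,\th_2\in[\De_P,\ka]$, we need $\th$ with $\th\cap\lam=\th_1\cap\lam$ and $\th\cap\rho=\th_2\cap\rho$. The natural candidate is $\th = (\th_1\cap\lam)\vee(\th_2\cap\rho)$. One inclusion in each required equality is clear; for the reverse, the same "factor the pair through $R_f^P\cap L_g^P$" device from Step 2 should show that a pair of $\th$ lying in $\lam$ already lies in $\th_1\cap\lam$, and symmetrically for $\rho$. \textbf{This is the step I expect to be the main obstacle}, because one must control the $\lam$- and $\rho$-components of a join of congruences, and joins are computed as transitive closures of unions; the argument will require pushing an arbitrary zig-zag $f=f_0\to f_1\to\cdots\to f_k=g$ with alternating steps in $\th_1\cap\lam$ and $\th_2\cap\rho$ down to a controlled form, using Green's Lemma and stability (much as in the proof of Proposition \ref{prop:join} and Lemma \ref{lem:th}) to re-express it so that if the endpoints are $\L^P$-related then all the "$\rho$-steps" collapse. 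Once these three hypotheses of Proposition \ref{pr:dp} are in hand, the isomorphism $[\De_P,\ka]\cong[\De_P,\lam]\times[\De_P,\rho]$ via $\th\mapsto(\th\cap\lam,\th\cap\rho)$ follows at once.
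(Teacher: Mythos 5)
Your overall strategy is the paper's: apply Proposition \ref{pr:dp} to the two maps $\th\mt\th\cap\lam$ and $\th\mt\th\cap\rho$, and your observation that $\lam\cap\rho=\ka\cap{\H^P}=\De_P$ (via injectivity of $\phi$ on $\H^P$-classes, Lemma \ref{lem:DE}\ref{DE1}) is correct and is exactly what the paper uses. But there are two genuine gaps. First, you never verify that $\lam$ and $\rho$ are congruences of $P$; since $\L^P$ is only a right congruence and $\R^P$ only a left congruence, this is not automatic, and without it the intervals $[\De_P,\lam]$, $[\De_P,\rho]$ and the maps themselves are not well defined. The paper proves it (Lemma \ref{la:lamcon}) by showing that $(f,g)\in\rho$ and $b\in P$ force $f\star b=g\star b$ outright, using an idempotent $e$ in the common $\R^P$-class together with $afa=aga$. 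Second, and more seriously, your justification of join-preservation in Step 1 does not work: congruence lattices of semigroups are not modular in general, so you cannot invoke ``distributivity-type manipulations in the modular lattice $\Cong(P)$''; and even granting modularity, the identity $(\th_1\vee\th_2)\cap\lam=(\th_1\cap\lam)\vee(\th_2\cap\lam)$ is a distributivity statement that does not follow from $\lam\vee\rho=\ka$ and $\lam\cap\rho=\De_P$ (complemented pairs in an interval of a modular lattice need not induce direct decompositions). The actual argument is semigroup-theoretic: given $(f,g)\in(\th_1\vee\th_2)\cap\lam$ and a zig-zag $f=f_0\to\cdots\to f_k=g$ with steps in $\th_1\cup\th_2$, one multiplies the entire chain on the right by an idempotent $e\in L_f^P$; Lemma \ref{lem:fge} places every $f_i\star e$ in $L_f^P$ while consecutive pairs stay in $\th_1\cup\th_2$, so the whole chain lies in $(\th_1\cap\lam)\cup(\th_2\cap\lam)$ and $(f,g)=(f_0\star e,f_k\star e)$ is in the join.

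Two further comments. You have misplaced the main difficulty: once join-preservation is established, your Step 3 is a two-line computation, namely $\Phi_\lam\bigl((\th_1\cap\lam)\vee(\th_2\cap\rho)\bigr)=(\th_1\cap\lam)\vee(\th_2\cap\rho\cap\lam)=(\th_1\cap\lam)\vee\De_P=\Phi_\lam(\th_1)$, and dually for $\Phi_\rho$; no zig-zag analysis is needed there. Finally, in Step 2 an arbitrary element $h$ of $R_f^P\cap L_g^P$ will not do, since there is no reason such an $h$ is $\th_1$-related to $f$ and $g$, nor that $\ol h=\ol f$; you must take the specific element $h=f\star e'$ for an idempotent $e'\in L_g^P$ (or $g\star e$ with $e\in L_f^P$), which is precisely what Lemma \ref{lem:fge} provides. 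With these repairs your outline matches the paper's proof.
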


\begin{proof}
We apply Proposition \ref{pr:dp}, for which we need to verify the following:
\begin{itemize}
\item
$\lambda,\rho\in\Cong(P)$ (Lemma \ref{la:lamcon}), so that $[\Delta_P,\lambda]$ and $[\Delta_P,\rho]$ are well-defined intervals in $\Cong(P)$.
\item
The mappings $\Phi_\lambda: \iK\rightarrow [\Delta_P,\lambda]:\theta\mapsto \theta\cap\lambda$ and
$\Phi_\rho: \iK\rightarrow [\Delta_P,\rho]:\theta\mapsto \theta\cap\rho$ are lattice surmorphisms
(Lemma \ref{la:philsur}). 
\item
$\ker(\Phi_\lambda)\cap\ker(\Phi_\rho)=\Delta_\iK$ (Corollary \ref{la:capdel}).
\item
$\ker(\Phi_\lambda)\circ\ker(\Phi_\rho)=\nabla_\iK$ (Lemma \ref{la:kercirc}). \qedhere
\end{itemize}
\end{proof}

\begin{lemma}
\label{la:lamcon}
The relations $\lambda$ and $\rho$ are congruences on $P$.
\end{lemma}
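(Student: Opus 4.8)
I want to show that $\lambda = \kappa \cap {\L^P}$ and $\rho = \kappa \cap {\R^P}$ are congruences on $P$. By left-right duality it suffices to treat $\lambda$ (or $\rho$); I will focus on $\lambda$. Since $\lambda$ is an intersection of two equivalences it is an equivalence, and since $\kappa$ is a congruence (being $\ker\phi$), the only thing to check is compatibility. Recall that $\L^P$ is a right congruence but in general \emph{not} a left congruence, so $\lambda$ is automatically right-compatible, and the whole task reduces to proving left-compatibility: given $(f,g)\in\lambda$ and $h\in P$, I must show $(h\star f, h\star g)\in\lambda$, i.e.\ that $h\star f \mathrel{\kappa} h\star g$ (which is free, since $\kappa$ is a congruence) \emph{and} that $h\star f \mathrel{\L^P} h\star g$.

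The heart of the matter is therefore: if $f\mathrel{\L^P}g$ and $\overline f = \overline g$, then $h\star f \mathrel{\L^P} h\star g$ for every $h\in P$. Here I would unpack the hypothesis concretely: $f\mathrel{\L^P}g$ means $\im(f)=\im(g)$, and $\overline f=\overline g$ means $afa = aga$. Now $h\star f = haf$ and $h\star g = hag$, so I need $\im(haf)=\im(hag)$. The plan is to show $\im(haf)$ depends on $f$ only through $af$, and moreover only through $\im(af)$, and then to deduce $\im(af)=\im(ag)$ from the two hypotheses. For the latter: $af$ and $ag$ have the same rank as $f$ and $g$ (since $f,g\in P$, the map $\phi$ is image-injective on $\H^P$-classes, or more simply $\rank(af)=\rank(afa)=\rank(f)$ for $f\in P$ using the characterisation $P=\{f:\rank(afa)=\rank(f)\}$ together with $\rank(afa)\le\rank(af)\le\rank(f)$). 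Since $afa=aga$ forces $\im(afa)=\im(aga)$, and $\im(af)\supseteq\im(afa)=\im(aga)\subseteq\im(ag)$ with all four sets having the same size $\rank(f)=\rank(g)$, we get $\im(af)=\im(afa)=\im(aga)=\im(ag)$. Then $\im(haf) = \im(af)h$-image $= (\im(af))h = (\im(ag))h = \im(hag)$ — more precisely, for any transformations, $\im(k\ell)$ is the image of $\im(\ell)$... wait, that's backwards in the convention where maps act on the right and compose left-to-right: $\im(k\ell)=\im(\ell)$ restricted appropriately; I should instead argue $\im(haf)=\{x(haf):x\in X\}=\{y f : y \in \im(ha)\}$ depends on $af$ only via how $af$ maps $\im(ha)$, and since $af$ and $ag$ agree — no: I only know $\im(af)=\im(ag)$, not that $af=ag$.

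So the argument needs one more turn: I should use that $\L^P$ is determined by images and that post-composition (on the left, i.e.\ by $ha$) transports $\L$-relatedness. Concretely, $f\mathrel{\L^P}g$ gives $f=u\star g$ and $g=v\star f$ for some $u,v\in P^1$, whence $h\star f = h\star u\star g$ and $h\star g=h\star v\star f$, so $h\star f$ and $h\star g$ are $\leq_{\L^P}$ each other, i.e.\ $h\star f\mathrel{\L^P}h\star g$ — \emph{provided} $h\star u, h\star v$ lie in $P^1$, which they do since $P$ is closed under $\star$. Wait, but this argument only used $f\mathrel{\L^P}g$ and never $\kappa$, which would prove $\L^P$ itself is a left congruence — false. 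The gap: $u\star g = ufg\cdot$... the issue is that $f = u\star g$ in $P$ does \emph{not} follow from $\im f=\im g$ alone unless the relevant $\L$-class structure cooperates; in a variant, $\im(f)=\im(g)$ does give $f\mathrel{\L^P}g$ by the stated characterisation, but then $f = u\star g$ with $u\in P^1$ — and then $h\star f = (h\star u)\star g$ genuinely holds. I must be misremembering that $\L^P$ fails to be a left congruence; let me just trust the paper's assertion and locate where $\kappa$ is truly needed.

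\textbf{Revised plan.} The clean route: $\lambda\subseteq\kappa$ and $\kappa$ is a congruence, so I only need that $\lambda$ is left-compatible as a \emph{sub}relation, i.e.\ $(f,g)\in\lambda \Rightarrow (h\star f,h\star g)\in\lambda$. Right-compatibility is free ($\L^P$ right congruence, $\kappa$ congruence). For left: $(h\star f, h\star g)\in\kappa$ is free. For $(h\star f,h\star g)\in{\L^P}$, i.e.\ $\im(haf)=\im(hag)$: from $(f,g)\in\kappa$ we get $afa=aga$, hence $haf\cdot a = hag\cdot a$; thus $haf$ and $hag$ are $\kappa$-related, so $\rank(haf)=\rank(\overline{haf})=\rank(\overline{hag})=\rank(hag)$ — wait, $haf$ need not lie in $P$. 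But $haf = h\star f\in P$ since $h,f\in P$. Good: so $\rank(a(h\star f)a)=\rank(h\star f)$, i.e.\ $\rank(haf) = \rank(ahafa\cdots)$ — getting tangled. The cleanest finish: $\im(haf)\supseteq\im(hafa)=\im(haga)\subseteq\im(hag)$; since $h\star f, h\star g\in P$ their ranks equal those of their images under $\phi$, and $\overline{h\star f}=\overline{h\star g}$ (as $\kappa$ is a congruence), so $\rank(h\star f)=\rank(h\star g)$, i.e.\ $|\im(haf)|=|\im(hag)|$; combined with the containments sandwiching $\im(hafa)=\im(haga)$ we get $\im(haf)=\im(hag)$. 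The main obstacle is exactly this sandwiching-plus-rank argument — making precise that $\rank(k)=\rank(aka)$ for $k\in P$ forces the outer images to coincide with the common inner image; everything else (right-compatibility, $\lambda\subseteq\kappa$, the dual for $\rho$) is immediate.
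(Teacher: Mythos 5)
Your reduction is sound: $\lambda=\kappa\cap{\L^P}$ is an equivalence, right compatibility is free because $\L^P$ is a right congruence and $\kappa$ is a congruence, and $(h\star f,h\star g)\in\kappa$ is also free, so everything rests on showing $\im(haf)=\im(hag)$. The gap is in your sandwich. Under the paper's convention (maps act on the right, composition left to right), $\im(hafa)$ is the image of the set $\im(haf)$ under $a$, i.e.\ $\im(hafa)=(\im(haf))a$, which is \emph{not} contained in $\im(haf)$. For instance, with $a=\trans{1&2&3&4\\1&2&3&3}$ and $f=c_4$ the constant map onto $4$ (which lies in $P$), one has $\im(haf)=\{4\}$ but $\im(hafa)=\{3\}$. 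So the chain $\im(haf)\supseteq\im(hafa)=\im(haga)\subseteq\im(hag)$ fails at both ends, and the cardinality count cannot rescue it: the information you actually have (equal ranks, equal images under $a$) only says that $\im(haf)$ and $\im(hag)$ are two equal-sized cross-sections of the same collection of $\ker(a)$-classes, and two such cross-sections need not coincide. The hypothesis you never use is $\im(f)=\im(g)$ itself.

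There are two ways to close the gap. (1) Staying with your image computation: since $\im(haf)\subseteq\im(f)=\im(g)\supseteq\im(hag)$, and since $a$ is injective on $\im(f)$ (because $f\in P$ forces $\rank(afa)=\rank(f)$), the equality $(\im(haf))a=\im(hafa)=\im(haga)=(\im(hag))a$ of images under a map injective on the common ambient set $\im(f)$ yields $\im(haf)=\im(hag)$. (2) The paper's route, which is shorter and proves more: it treats $\rho$, handling the non-free compatibility by fixing an idempotent $e$ in the common $\R^P$-class, writing $f=e\star f$ and $g=e\star g$, and computing $f\star b=e(afa)b=e(aga)b=g\star b$; dually for $\lambda$, an idempotent $e\in L_f^P=L_g^P$ gives $h\star f=h(afa)e=h(aga)e=h\star g$. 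That is, the two products are literally equal, not merely $\L^P$-related, so no image analysis is needed at all.
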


\begin{proof}
We prove the statement for $\rho$, the one for $\lambda$ being dual.
Since $\kappa$ is a congruence and $\R^P$ a left congruence, it follows that
$\rho=\kappa\cap {\R}^P$ is a left congruence.
To prove that $\rho$ is right compatible, suppose $(f,g)\in\rho$ and $b\in P$.  Since $\rho\sub\ka$ it follows that $\ol f=\ol g$, i.e.~$afa=aga$.  Since $(f,g)\in{\R^P}$, we can fix an idempotent $e$ in the $\R^P$-class $R_f^P=R_g^P$, and we have $f=e\star f$ and $g=e\star g$.  But then
\[
f\star b = e\star f\star b = e(afa)b = e(aga)b = e\star g\star b = g\star b,
\]
and certainly $(f\star b,g\star b)\in\rho$, as required.
\end{proof}

\begin{lemma}
\label{la:philsur}
$\Phi_\lambda$ and $\Phi_\rho$ are lattice surmorphisms.
\end{lemma}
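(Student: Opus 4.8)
The plan is to verify, for each of $\Phi_\lam$ and $\Phi_\rho$, the three conditions defining a lattice surmorphism: that the map is well defined and onto, that it preserves meets, and that it preserves joins. I will treat $\Phi_\lam$ in detail; $\Phi_\rho$ is handled by the left--right dual argument. Well-definedness is clear, since for $\th\in\iK$ the relation $\th\cap\lam$ is a congruence lying between $\De_P$ and $\lam$ (using that $\lam\in\Cong(P)$, Lemma~\ref{la:lamcon}). Surjectivity is equally clear: any $\th\in[\De_P,\lam]$ satisfies $\th\sub\lam\sub\ka$, so $\th\in\iK$ and $\Phi_\lam(\th)=\th\cap\lam=\th$. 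Preservation of meets is the trivial identity $(\th_1\cap\th_2)\cap\lam=(\th_1\cap\lam)\cap(\th_2\cap\lam)$, since meets in $\Cong(P)$ (and in the interval) are intersections. So the whole content is preservation of joins.

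The join identity to be proved is
\[
(\th_1\vee\th_2)\cap\lam=(\th_1\cap\lam)\vee(\th_2\cap\lam)\qquad\text{for all }\th_1,\th_2\in\iK,
\]
and the inclusion $\supseteq$ is immediate. For $\sub$, fix $(f,g)\in(\th_1\vee\th_2)\cap\lam$ together with a connecting sequence $f=f_0\to f_1\to\cdots\to f_k=g$ in which each $(f_i,f_{i+1})\in\th_1\cup\th_2$. Since $\th_1,\th_2\sub\ka$, the element $\ol{f_i}$ is the same for every $i$, namely $\ol f=\ol g$; as $f_i\in P$, this gives $\rank(f_i)=\rank(\ol{f_i})=\rank(f)$. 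Now fix an idempotent $e$ of $P$ with $e\L^P f$ (one exists because $f$ is regular), so that $e\L^P g$ as well, and put $h_i=f_i\star e$. Then $h_0=f\star e=f$ and $h_k=g\star e=g$ (because $x\star e=x$ whenever $x\L^P e$ and $e$ is idempotent), and right-compatibility gives $(h_i,h_{i+1})\in\th_j$ whenever $(f_i,f_{i+1})\in\th_j$. Moreover each $(h_i,h_{i+1})$ lies in $\lam$: since $\th_j\sub\ka$ it lies in $\ka$, so the $\ol{h_i}$ are all equal to $\ol{h_0}=\ol f$ and hence $\rank(h_i)=\rank(\ol{h_i})=\rank(f)$; on the other hand $h_i=f_i\star e$ forces $\im(h_i)\sub\im(e)=\im(f)$, and comparing ranks gives $\im(h_i)=\im(e)$, i.e.~$h_i\L^P e\L^P h_{i+1}$. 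Thus each $(h_i,h_{i+1})\in\th_j\cap\lam\sub(\th_1\cap\lam)\cup(\th_2\cap\lam)$, and the chain $f=h_0\to\cdots\to h_k=g$ witnesses $(f,g)\in(\th_1\cap\lam)\vee(\th_2\cap\lam)$.

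The main obstacle is precisely this last step: upgrading an arbitrary $\th_1\vee\th_2$-chain to one that stays inside $\lam$. The device that makes it work is to post-multiply every term by a single fixed idempotent $e\L^P f$, which retracts the chain into the $\L^P$-class $L_f^P$ while keeping each link in its original $\th_j$; the feature of $P$ that keeps the retraction harmless is that its elements satisfy $\rank(f_i)=\rank(\ol{f_i})$, so passing through $\star e$ cannot shrink the image. For $\Phi_\rho$ the dual argument applies verbatim: use an idempotent $e\R^P f$, set $h_i=e\star f_i$, and replace the containment of images by $\ker(e)\sub\ker(h_i)$, which together with equality of ranks yields $\ker(h_i)=\ker(e)$, hence $h_i\R^P e\R^P h_{i+1}$ and $(h_i,h_{i+1})\in\ka\cap{\R^P}=\rho$.
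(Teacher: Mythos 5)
Your proof is correct and follows essentially the same route as the paper: the only substantive content is join-preservation, and both arguments retract the connecting $\th_1\cup\th_2$-chain into $L_f^P$ by post-multiplying every term by a fixed idempotent $e\in L_f^P$. The sole difference is that the paper deduces $f_i\star e\in L_f^P$ from Lemma~\ref{lem:fge} (via stability), whereas you verify it directly from $\rank(h_i)=\rank(\ol{h_i})=\rank(f)$ and $\im(h_i)\sub\im(e)$ --- an equally valid, slightly more hands-on computation in the transformation setting.
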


\begin{proof}
We prove the statement for $\Phi_\lambda$, and the one for $\Phi_\rho$ is dual.
That $\Phi_\lambda$ is well defined follows from Lemma \ref{la:lamcon}, and that it is surjective from the fact that it acts as the identity mapping on its image,~$[\Delta_P,\lambda]$. 
That $\Phi_\lambda$ respects $\cap$ is immediate from the definition:
\[
\Phi_\lambda(\theta_1\cap\theta_2)=\theta_1\cap\theta_2\cap\lambda=(\theta_1\cap\lambda)\cap(\theta_2\cap\lambda)=\Phi_\lambda(\theta_1)\cap \Phi_\lambda(\theta_2) \qquad\text{for $\th_1,\th_2\in[\De_P,\ka]$.}
\]
To prove that it also respects $\vee$, we need to verify that
\[
(\theta_1\vee\theta_2)\cap\lambda=(\theta_1\cap\lambda)\vee(\theta_2\cap\lambda) \qquad\text{for all $\th_1,\th_2\in[\De_P,\ka]$.}
\]
The reverse inclusion  is obvious. For the direct inclusion, let $(f,g)\in(\theta_1\vee\theta_2)\cap\lambda$.
This means that $(f,g)\in\lambda$ and there is a sequence $f=f_0\to f_1\to\dots\to f_k=g$ such that each
${(f_i,f_{i+1})\in\theta_1\cup\theta_2}$.
Let~$e$ be any idempotent in $L_f=L_g$.
Since $(f,f_i)\in\th_1\vee\th_2\sub\ka\sub{\hH^P}$, Lemma \ref{lem:fge} applies and tells us that all $f_i\star e$ are $\L^P$-related (to $f$), and we have ${(f_i\star e,f_{i+1}\star e)\in \theta_1\cup\theta_2}$ for $0\leq i<k$.
Hence $ (f_i\star e,f_{i+1}\star e)\in (\theta_1\cap\lambda)\cup(\theta_2\cap\lambda)$ for all such $i$, and therefore
\[
(f,g)=(f\star e,g\star e)=(f_0\star e,f_k\star e)\in (\theta_1\cap \lambda)\vee(\theta_2\cap\lambda).  \qedhere
\]
\end{proof}

\begin{lemma}
\label{la:capvee}
For every $\theta\in \iK$ we have $\theta=\Phi_\lam(\th)\vee\Phi_\rho(\th)$.
\end{lemma}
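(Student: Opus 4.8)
The plan is to establish the two inclusions of $\th=\Phi_\lam(\th)\vee\Phi_\rho(\th)$ separately. One direction, $\Phi_\lam(\th)\vee\Phi_\rho(\th)\sub\th$, is immediate: both $\th\cap\lam$ and $\th\cap\rho$ are contained in $\th$, and $\th$, being a congruence, is in particular an equivalence containing the join of any two subrelations. So the entire content lies in the reverse inclusion $\th\sub\Phi_\lam(\th)\vee\Phi_\rho(\th)$.

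To prove this, I would take an arbitrary pair $(f,g)\in\th$ and factor it through a carefully chosen intermediate element: I will exhibit an element $h$ with $(f,h)\in\th\cap\lam$ and $(h,g)\in\th\cap\rho$, which gives $(f,g)\in(\th\cap\lam)\circ(\th\cap\rho)\sub\Phi_\lam(\th)\vee\Phi_\rho(\th)$. The element to use is $h=g\star e$, where $e$ is any idempotent in the $\L^P$-class $L_f^P$; such an idempotent exists because every element of $P$ is regular. The key input is Lemma \ref{lem:fge}: since $\th\in\Cong(P)$ and $\th\sub\ka\sub\hH^P$, the pair $(f,g)$ lies in $\th\cap\hH^P$, so the lemma applies and yields both $(f,g\star e)\in\th$ and $g\star e\in L_f^P\cap R_g^P$.

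The two required memberships then follow directly. First, $g\star e\in L_f^P$ says $(f,g\star e)\in\L^P$; combined with $(f,g\star e)\in\th\sub\ka$ this gives $(f,g\star e)\in\th\cap(\ka\cap\L^P)=\th\cap\lam=\Phi_\lam(\th)$. Second, applying transitivity of $\th$ to $(f,g)$ and $(f,g\star e)$ yields $(g\star e,g)\in\th$, while $g\star e\in R_g^P$ gives $(g\star e,g)\in\R^P$, so $(g\star e,g)\in\th\cap(\ka\cap\R^P)=\th\cap\rho=\Phi_\rho(\th)$. Chaining $f\mathrel{\Phi_\lam(\th)}g\star e\mathrel{\Phi_\rho(\th)}g$ finishes the argument. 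I do not expect a genuine obstacle here: the only points needing attention are verifying the hypotheses of Lemma \ref{lem:fge} — that $\th$ is a bona fide congruence and that $\ka\sub\hH^P$, both already recorded — and noting that the $\ka$-membership needed inside $\lam$ and $\rho$ comes for free from $\th\sub\ka$. The one conceptual point, that a single element $g\star e$ can be simultaneously $\L^P$-related to $f$ and $\R^P$-related to $g$, is precisely the content of Lemma \ref{lem:fge} and requires no further work.
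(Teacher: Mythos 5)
Your proposal is correct and follows essentially the same route as the paper's own proof: both take $h=g\star e$ for an idempotent $e\in L_f^P$, invoke Lemma \ref{lem:fge} to place $h$ in $L_f^P\cap R_g^P$ and in the $\th$-class of $f$ and $g$, and then read off $(f,h)\in\Phi_\lam(\th)$ and $(h,g)\in\Phi_\rho(\th)$. The only difference is that you spell out the verification of the hypotheses of Lemma \ref{lem:fge} in more detail.
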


\begin{proof}
For the direct inclusion (the reverse is obvious), let $(f,g)\in\theta$, and fix some idempotent $e\in L_f^P$.
By Lemma \ref{lem:fge}, the element $h=g\star e$ belongs to $L_f^P\cap R_g^P$, and is $\theta$-related to both $f$ and $g$.
In particular, $(f,h)\in \theta\cap\lambda=\Phi_\lam(\th)$ and $(h,g)\in\theta\cap\rho=\Phi_\rho(\th)$, and so
$(f,g)\in \Phi_\lam(\th)\vee\Phi_\rho(\th)$.
\end{proof}

\begin{rem}\label{rem:joincomp2}
As with Remark \ref{rem:joincomp}, the above proof shows that $\theta=\Phi_\lam(\th)\circ\Phi_\rho(\th)=\Phi_\rho(\th)\circ\Phi_\lam(\th)$ for any $\th\in\iK$.  As a special case, $\ka=\lam\circ\rho=\rho\circ\lam$.
\end{rem}

\begin{cor}
\label{la:capdel}
$\ker(\Phi_\lambda)\cap \ker(\Phi_\rho)=\Delta_\iK$.  \epfres
\end{cor}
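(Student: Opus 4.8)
The plan is to read off the statement as a formal consequence of Lemma \ref{la:capvee}, since essentially all the substantive work has already been done there (and, upstream, in Lemma \ref{lem:fge}).  Lemma \ref{la:capvee} asserts that every $\th\in\iK$ satisfies $\th=\Phi_\lam(\th)\vee\Phi_\rho(\th)$; in other words, the pair of maps $(\Phi_\lam,\Phi_\rho)$ is point-separating on $\iK$, which is exactly the content of the corollary.

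In detail: I would first unwind the definition of the kernel of a lattice morphism, so that a pair $(\th_1,\th_2)\in\ker(\Phi_\lam)\cap\ker(\Phi_\rho)$ is precisely one with $\th_1\cap\lam=\th_2\cap\lam$ and $\th_1\cap\rho=\th_2\cap\rho$.  Applying Lemma \ref{la:capvee} to each of $\th_1$ and $\th_2$ then yields
\[
\th_1=\Phi_\lam(\th_1)\vee\Phi_\rho(\th_1)=\Phi_\lam(\th_2)\vee\Phi_\rho(\th_2)=\th_2,
\]
so that $(\th_1,\th_2)\in\De_\iK$.  The reverse containment $\De_\iK\sub\ker(\Phi_\lam)\cap\ker(\Phi_\rho)$ is immediate, and this completes the argument.

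I do not anticipate any genuine obstacle here: the only ingredients required are that $\Phi_\lam$ and $\Phi_\rho$ are well-defined lattice morphisms with honest kernels (Lemma \ref{la:philsur}) together with the join-decomposition of Lemma \ref{la:capvee}.  The one place warranting a moment's care is simply making sure Lemma \ref{la:capvee} is invoked separately for $\th_1$ and for $\th_2$ before the two expressions are compared, but this is entirely routine.
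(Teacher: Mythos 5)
Your argument is correct and is precisely how the paper intends the corollary to be read: it is stated with no proof immediately after Lemma \ref{la:capvee}, whose join-decomposition $\th=\Phi_\lam(\th)\vee\Phi_\rho(\th)$ shows the pair $(\Phi_\lam,\Phi_\rho)$ separates points of $\iK$, exactly as you argue. No gaps.
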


\begin{lemma}
\label{la:kercirc}
$\ker(\Phi_\lambda)\circ\ker(\Phi_\rho)=\nabla_\iK$.
\end{lemma}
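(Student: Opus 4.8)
The plan is to verify the last of the four conditions required by Proposition~\ref{pr:dp}, namely that $\ker(\Phi_\lambda)\circ\ker(\Phi_\rho)=\nabla_\iK$. Unwinding the definitions, this says: for every pair $\theta_1,\theta_2\in\iK$ there should be a single $\theta\in\iK$ with $\theta\cap\lambda=\theta_1\cap\lambda$ and $\theta\cap\rho=\theta_2\cap\rho$. The congruence that ought to do the job — the one corresponding, under the anticipated isomorphism $\iK\cong[\De_P,\lambda]\times[\De_P,\rho]$, to the pair $(\theta_1\cap\lambda,\ \theta_2\cap\rho)$ — is
\[
\theta = (\theta_1\cap\lambda)\vee(\theta_2\cap\rho).
\]
That $\theta\in\iK$ is immediate, since both joinands are contained in $\kappa$ (as $\lambda,\rho\sub\kappa$), and $\kappa$, being a congruence, contains the join of any two subcongruences of itself.

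The one genuinely new ingredient is the identity $\lambda\cap\rho=\De_P$, which I would record first as a small separate observation. Since $\lambda\cap\rho=\ka\cap{\L^P}\cap{\R^P}=\ka\cap{\H^P}$, it amounts to saying that $\ol f=\ol g$ and $f\H^P g$ together force $f=g$; and this follows at once from Lemma~\ref{lem:DE}\ref{DE1}, because $f\H^P g$ gives $H_f^P=H_g^P$, while $\phi$ restricts to a \emph{bijection} $H_f^P\to H_{\ol f}^T$, so $\ol f=\ol g$ leaves no room for $f\neq g$.

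Granting $\lambda\cap\rho=\De_P$, the rest is formal, using that $\Phi_\lambda$ and $\Phi_\rho$ are lattice surmorphisms (Lemma~\ref{la:philsur}), hence join-preserving on $\iK$:
\[
\Phi_\lambda(\theta)=\Phi_\lambda(\theta_1\cap\lambda)\vee\Phi_\lambda(\theta_2\cap\rho)=(\theta_1\cap\lambda)\vee(\theta_2\cap\rho\cap\lambda)=(\theta_1\cap\lambda)\vee\De_P=\theta_1\cap\lambda,
\]
and symmetrically $\Phi_\rho(\theta)=\theta_2\cap\rho$. Thus $\Phi_\lambda(\theta)=\Phi_\lambda(\theta_1)$ and $\Phi_\rho(\theta)=\Phi_\rho(\theta_2)$, i.e.\ $(\theta_1,\theta)\in\ker(\Phi_\lambda)$ and $(\theta,\theta_2)\in\ker(\Phi_\rho)$, so $(\theta_1,\theta_2)\in\ker(\Phi_\lambda)\circ\ker(\Phi_\rho)$. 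Since $\theta_1,\theta_2$ were arbitrary, this exhausts $\nabla_\iK$.

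I do not anticipate a real obstacle: once Lemma~\ref{la:philsur} and the identity $\lambda\cap\rho=\De_P$ are in hand the argument is essentially forced, and the latter is a one-line consequence of the bijectivity in Lemma~\ref{lem:DE}\ref{DE1}. The only minor points to watch are that the join is taken inside $\iK$ (harmless, as $\iK$ is an interval, hence a sublattice, of $\Cong(P)$, where joins coincide) and that the convention for relational composition matches the one already used in Remarks~\ref{rem:joincomp} and~\ref{rem:joincomp2}.
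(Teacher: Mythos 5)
Your proposal is correct and follows essentially the same route as the paper: the same witness $\theta=(\theta_1\cap\lambda)\vee(\theta_2\cap\rho)$ and the same computation via the morphism property of $\Phi_\lambda$ and $\Phi_\rho$ from Lemma~\ref{la:philsur}. The only difference is that you explicitly justify the step $\theta_2\cap\rho\cap\lambda=\De_P$ via Lemma~\ref{lem:DE}\ref{DE1}, which the paper leaves implicit; this is a correct and welcome addition.
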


\begin{proof}
Let $\theta_1,\theta_2\in \iK$ be arbitrary. Define $\theta=(\theta_1\cap\lambda)\vee(\theta_2\cap\rho)$.
Since $\Phi_\lambda$ is a lattice morphism (Lemma \ref{la:philsur}), we have
\[
\Phi_\lambda(\theta)=\Phi_\lambda(\theta_1\cap\lambda)\vee\Phi_\lambda(\theta_2\cap\rho)
=(\theta_1\cap\lambda\cap\lambda)\vee(\theta_2\cap\rho\cap\lambda)=(\theta_1\cap\lambda)\vee\Delta_P=
\theta_1\cap\lambda=\Phi_\lambda(\theta_1),
\]
and hence $(\theta_1,\theta)\in\ker(\Phi_\lambda)$.
Dually, $(\theta,\theta_2)\in\ker(\Phi_\rho)$, and so $(\theta_1,\theta_2)\in\ker(\Phi_\lambda)\circ\ker(\Phi_\rho)$.
\end{proof}

\section{\boldmath The intervals $[\De_P,\rho]$ and $[\De_P,\lambda]$ as subdirect products of full lattices of equivalence relations}\label{sec:RL}

We have just seen that the interval $\iK = [\De_P,\ka]$ in $\Cong(P)$ decomposes as a direct product of ${\iL = [\De_P,\lam]}$ and~$\iR = [\De_P,\rho]$, so we now turn our attention to these two intervals.  We treat them in essentially the same way, modulo some differing technicalities.  In the first subsection we give the full details for the interval $[\De_P,\rho]$, and in the second we indicate how to adapt this for $[\De_P,\lam]$.

\subsection[The interval ${[}\Delta_P,\rho{]}$]{\boldmath The interval $[\Delta_P,\rho]$}
\label{ss:R}

In what follows, we use the notation of Section \ref{sec:statement}, including the set $\cC_I$ of cross-sections of $\set{A_i}{i\in I}$, for $\es\not= I\sub[r]$.  
Every $\L^P$-class of $P$ takes the form $L_C=\set{ f\in P}{ \im(f)=C}$ where $C\in \cC_I$ for some $I$.  For a fixed $I$, the union $\bigcup_{C\in\cC_I} L_C$ is an $\hL^P$-class of $P$, and all $\hL^P$-classes have this form for some $I$.  For $\theta\in\iR = [\De_P,\rho]$ define
\begin{equation}
\label{eq:PsiI}
\Psi_I(\theta) = \bigset{ (C,C')\in\cC_I\times\cC_I}{ \theta\cap (L_{C}\times L_{C'})\neq\emptyset}.
\end{equation}

\begin{thm}
\label{thm:DeR}
The mapping
\[
 [\De_P,\rho]\rightarrow \prod_{\emptyset\neq I\subseteq [r]} \Eq(\cC_I):
\theta\mapsto (\Psi_I(\theta))
\]
is a subdirect embedding of lattices, with image
\[
\Bigset{ (\psi_I) \in \prod_{\emptyset\neq I\subseteq [r]} \Eq(\cC_I)}{ \psi_I\restr_J\subseteq \psi_J\text{ for all } \emptyset\neq J\subseteq I\subseteq [r]}.
\]
\end{thm}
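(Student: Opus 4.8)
The plan is to invoke Proposition \ref{pr:sdp} with the family of maps $\Psi_I$ ($\es\neq I\sub[r]$), so the work splits into three tasks: (a) each $\Psi_I$ is a well-defined lattice surmorphism onto $\Eq(\cC_I)$; (b) the combined map $\theta\mapsto(\Psi_I(\theta))$ is injective, i.e.\ $\bigcap_I\ker(\Psi_I)=\De_{\iR}$; and (c) the image is exactly the set of compatible tuples described in the statement. First I would check that $\Psi_I(\theta)$ really is an equivalence on $\cC_I$. Reflexivity is immediate. Symmetry is built into the definition. Transitivity is the point where $\rho\sub\R^P$ is used: if $\theta\cap(L_C\times L_{C'})$ and $\theta\cap(L_{C'}\times L_{C''})$ are both nonempty, one picks witnesses, and — since $\theta\sub\rho\sub\ka\sub\hH^P$, so any two $\theta$-related elements of $L_{C'}$ lie in the same $\hL^P$-class and are $\R^P$-related to the appropriate idempotents — I would use Lemma \ref{lem:fge} (post-multiplying by an idempotent $e\in L_{C'}^P$, which stabilises images and keeps us inside $L_{C'}$ while adjusting $\R^P$-classes) to move the two witnessing pairs into a common $\H^P$-class of $L_{C'}$ and then compose, producing a pair in $\theta\cap(L_C\times L_{C''})$. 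This is essentially the same manoeuvre already used in the proof of Lemma \ref{la:philsur}.

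Next, surjectivity and the lattice-morphism property of $\Psi_I$. For surjectivity, given any equivalence $\psi$ on $\cC_I$, I would exhibit a congruence $\theta\in\iR$ with $\Psi_I(\theta)=\psi$ and $\Psi_J(\theta)$ trivial for all $J\neq I$ — the natural candidate being (the congruence generated by, or directly) the union of $\De_P$ with all pairs $(f,g)$ such that $f\hH^P g$, $\im(f)=C$, $\im(g)=C'$ with $(C,C')\in\psi$, and $\ol f=\ol g$; one checks this is a congruence contained in $\rho$ using Lemma \ref{lem:th0}. Preservation of $\cap$ is formal from the definition (an intersection of the $\theta$'s gives an intersection of the $\Psi_I$'s once one knows $\theta\cap(L_C\times L_{C'})\neq\es$ forces, via Lemma \ref{lem:th0}, a \emph{canonical} witness depending only on $C,C'$ and the common $\phi$-image, so witnesses for $\theta_1\cap\theta_2$ can be found simultaneously). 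Preservation of $\vee$ is again the Lemma \ref{lem:fge} trick: a $\theta_1\vee\theta_2$-path between $L_C$ and $L_{C'}$ can be pushed, by post-multiplying every term by a fixed idempotent $e\in L_C^P$, into a path each of whose steps lies in a single $\L^P$-class, so it descends to a $\Psi_I(\theta_1)\vee\Psi_I(\theta_2)$-path. Injectivity (b) then follows from a decomposition analogous to Proposition \ref{prop:join}/Lemma \ref{la:capvee}: any $\theta\in\iR$ should be recoverable from the data $(\Psi_I(\theta))_I$ because $\theta\sub\R^P$ forces each $\theta$-class to be determined by its image-data together with the $\phi$-kernel constraint (two $\theta$-related elements with the same image and the same $\phi$-image must be equal, since $\theta\sub\ka\cap\R^P$ and $H_f^T=H_f^P$ on $T$, while the rectangular-group structure of $\hH^P$-classes pins down the rest).

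For the image description (c): the inclusion $\Psi_I(\theta)\restr_J\sub\Psi_J(\theta)$ for $J\sub I$ comes from the observation that if $f\in L_C$ with $C\in\cC_I$, one can post-compose $f$ (inside $P$, using an idempotent whose image is $C\restr_J$) to land in $L_{C\restr_J}$, and right-compatibility of $\theta$ then transports a pair over $(C,C')$ to a pair over $(C\restr_J,C'\restr_J)$; so the tuple $(\Psi_I(\theta))$ always satisfies the restriction conditions. Conversely, given a compatible tuple $(\psi_I)$, I would build $\theta=\bigvee_I \theta_I$ where $\theta_I$ is the surjectivity-witness congruence for $\psi_I$ constructed above, and verify that $\Psi_I(\theta)=\psi_I$ using that the join of these "horizontal" congruences cannot create new identifications within a fixed $\hL^P$-class beyond what the compatibility conditions already force — this is where one needs that a $\theta$-path starting and ending in the $\hL^P$-block for $I$, but passing through blocks for various $J$, only contributes pairs already in $\psi_I$, precisely because of $\psi_J\restr{}\supseteq$-type constraints read in both directions. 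I expect this last verification — that the join of the single-index congruences has exactly the prescribed $\Psi_I$-values, with no spill-over — to be the main obstacle, and the argument will likely parallel the bookkeeping in Lemma \ref{lem:th} (tracking ranks and $\hH^P$-classes along a zig-zag and replacing each term by a canonical representative).
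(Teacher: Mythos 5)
Your overall skeleton (invoke Proposition \ref{pr:sdp}, so prove well-definedness, surjectivity, the lattice-morphism property, joint injectivity, and then characterise the image) is exactly the paper's, but several of the mechanisms you propose fail, and they fail for the same underlying reason: you systematically reach for \emph{right} multiplication where only \emph{left} multiplication can work. For any $(f,g)\in\rho$ and any $b\in P$ one has $f\star b=g\star b$ (this is precisely the computation in the proof of Lemma \ref{la:lamcon}: $f\star b=e(afa)b=e(aga)b=g\star b$). So post-composing a $\theta$-pair with anything produces a \emph{diagonal} pair and destroys all information. This kills three of your steps as written: (i) in transitivity, post-multiplying by an idempotent $e\in L_{C'}$ fixes the elements already in $L_{C'}$ and collapses the witness pairs, so it cannot align the two witnesses; (ii) in the inclusion $\Psi_I(\theta)\restr_J\subseteq\Psi_J(\theta)$, post-composition sends $f$ and $g$ to equal elements, so it cannot produce a pair over $(C\restr_J,C'\restr_J)$ with $C\restr_J\neq C'\restr_J$; (iii) in join-preservation, pushing the whole zig-zag into a single $\L^P$-class erases exactly the sequence of cross-sections $C_0,\dots,C_k$ you need to read off. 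The correct tool throughout is left multiplication: Green's Lemma transports a witness pair between $\R^P$-classes while staying inside $L_C\times L_{C'}$ (this gives the paper's Lemma \ref{lem:r}, a canonical matching $f\mapsto f'$ whose presence in $\theta$ is all-or-nothing), and a left factor $b$ with $\im(ba)$ meeting exactly the kernel classes indexed by $J$ produces the pair over $(C\restr_J,C'\restr_J)$ (Lemma \ref{la:Psidown}). For join-preservation no transport is needed at all: the intermediate terms already lie in various $L_{C_i}$ with all $C_i\in\cC_I$, and the path descends directly.

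Two further concrete problems. First, your surjectivity witness cannot exist: a congruence $\theta$ with $\Psi_I(\theta)=\psi$ nontrivial and $\Psi_J(\theta)$ trivial for all $J\neq I$ is impossible for $|I|\geq2$, because left compatibility forces $\Psi_I(\theta)\restr_J\subseteq\Psi_J(\theta)$ for every $\emptyset\neq J\subseteq I$; your proposed relation is therefore not left-compatible, and the congruence it generates has nontrivial $\Psi_J$ for $J\subset I$. (The paper's witness takes $\psi_J=\nabla_{\cC_J}$ for $J\subset I$.) Second, the converse half of the image description --- realising an \emph{arbitrary} compatible tuple $(\psi_I)$ --- is the crux of the theorem, and you explicitly leave it open; your plan of joining single-index witnesses inherits the previous problem and requires controlling spill-over under congruence generation, which you do not do. The paper avoids joins and generation entirely: it \emph{defines} $\theta=\bigcup_I\bigcup_{(C,C')\in\psi_I}\rho\cap(L_C\times L_{C'})$ and verifies directly that this union is already a congruence, with the hypotheses $\psi_I\restr_J\subseteq\psi_J$ used at exactly one point, namely to check left compatibility (the factor $b$ pushes a pair from level $I$ down to level $J=\set{j\in I}{\im(ba)\cap K_j\neq\emptyset}$). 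Surjectivity of each $\Psi_I$ is then a corollary of this reconstruction, not an input to it. Your injectivity argument (each $\theta\subseteq\rho$ is the union of the canonical matchings selected by the $\Psi_I(\theta)$) is essentially correct and matches Lemma \ref{la:thetabigcup}.
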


\newpage

\begin{proof}
For the first assertion we apply Proposition \ref{pr:sdp}, for which  we need to establish that:
\bit
\item
each $\Psi_I$ is a well defined mapping $\iR\rightarrow \Eq(\cC_I)$  (Lemma \ref{la:Psiwell});
\item
each $\Psi_I$ is surjective (Lemma \ref{la:Psionto});
\item
each $\Psi_I$ is a lattice morphism (Lemma \ref{la:Psihom});
\item
$\bigcap_{I\subseteq [r]} \ker(\Psi_I)=\Delta_{\iR}$ (Corollary \ref{la:interkerPsi}).
\eit
We prove that the image of the  embedding is as stated in Lemmas \ref{la:Psidown} and \ref{la:reconstruct}.
\end{proof}

The first lemma establishes an equivalent, ostensibly stronger condition for membership of $\Psi_I(\th)$.

\begin{lemma}\label{lem:r}
Let $\emptyset\neq I\subseteq [r]$ and $C,C'\in\cC_I$.
For each $f\in L_{C}$, there is a unique element $f'\in L_{C'}$ such that $(f,f')\in\rho$.
Furthermore, for any $\theta\in\iR$, we have
\[
\th\cap(L_{C}\times L_{C'})\not=\es\IFF (f,f')\in\th \text{ for all } f\in L_{C}.
\]
\end{lemma}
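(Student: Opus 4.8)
The first claim is that for each $f\in L_C$ there is a unique $f'\in L_{C'}$ with $(f,f')\in\rho$. The plan is to work explicitly with transformations. Since $C,C'\in\cC_I$, write $C=\set{c_i}{i\in I}$ and $C'=\set{c_i'}{i\in I}$, with $c_i,c_i'\in A_i$. Given $f\in L_C$, define $f'$ to be the transformation with the same kernel as $f$, but obtained by replacing the image point $c_i$ by $c_i'$ throughout; formally, $xf'=c_i'$ whenever $xf=c_i$. One then checks: (i) $\ker(f')=\ker(f)$, so $f'\R^P f$ provided $f'\in P$; (ii) $\overline{f'}=af'a=afa=\overline f$, using that $a$ collapses $A_i$ to $a_i$ and $c_i,c_i'$ lie in the same block $A_i$ — this gives $af'=af$, hence $af'a=afa$; so in particular $\rank(af'a)=\rank(afa)=\rank(f)=\rank(f')$, showing $f'\in P$, and also $(f,f')\in\ka$; combining with (i), $(f,f')\in\ka\cap\R^P=\rho$. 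For uniqueness, suppose $g\in L_{C'}$ also satisfies $(f,g)\in\rho$. Then $\ker(g)=\ker(f)$ and $\overline g=\overline f$; since $g$ and $f'$ have the same kernel and the same image $C'$ with both determined on each kernel class by $\overline{\phantom{x}}$-compatibility (the value on the block mapping to $c_i$ must be the unique point of $C'\cap A_i$, namely $c_i'$), we get $g=f'$.

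The second claim is the equivalence $\th\cap(L_C\times L_{C'})\neq\es \iff (f,f')\in\th$ for all $f\in L_C$. The backward direction is immediate: pick any $f\in L_C$, then $(f,f')\in\th\cap(L_C\times L_{C'})$. For the forward direction, suppose $(g,h)\in\th$ with $g\in L_C$ and $h\in L_{C'}$. Since $\th\sub\rho$, we have $(g,h)\in\rho$, so by the uniqueness part $h=g'$. Now take an arbitrary $f\in L_C$; we must show $(f,f')\in\th$. The idea is to transport the pair $(g,g')$ to the pair $(f,f')$ using the fact that $\L^P$ is a right congruence and $g\L^P f$. Since $g\L^P f$, Green's Lemma gives $s,t\in P^1$ (in fact one can take $s,t\in P$, or handle the $P^1$ case directly since $f,g\in P$) with $f=s\star g$ and $g=t\star f$, and the maps $u\mapsto s\star u$, $v\mapsto t\star v$ are mutually inverse bijections $L_g^P\to L_f^P$ and $L_f^P\to L_g^P$. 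Applying $\th$-compatibility on the left to $(g,g')\in\th$ yields $(s\star g, s\star g')\in\th$, i.e.\ $(f, s\star g')\in\th$. It remains to identify $s\star g'$ with $f'$: first, $s\star g'\in L_f^P$ since $g'\in L_g^P$ and the map preserves $L$-classes into $L_f^P$; second, $\overline{s\star g'}=\overline s\,\overline{g'}=\overline s\,\overline g=\overline{s\star g}=\overline f$, using $\overline{g'}=\overline g$. So $s\star g'$ is an element of $L_f^P$ with the same image-data and $\overline{\phantom x}$-value forcing it to be $f'$ by the uniqueness clause (applied with $C=C'$, or rather noting $s\star g'$ and $f'$ both lie in $L_{C'}$ — wait, $f\in L_C$, $f'\in L_{C'}$ — so I instead argue $(f,s\star g')\in\rho$ since $\th\sub\rho$, forcing $s\star g'=f'$ by uniqueness). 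Hence $(f,f')\in\th$.

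The main obstacle I anticipate is the bookkeeping in identifying $s\star g'$ with $f'$ — one needs the transport map from Green's Lemma to interact correctly with the $\phi$-retraction, and the cleanest route is probably to avoid explicit $s,t$ and instead argue: $(f,s\star g')\in\th\sub\rho$, $f\in L_C$, and $s\star g'\in L_f^P=L_C$; hmm, that would put both in $L_C$, which is wrong. Let me reconsider: actually since the map $u\mapsto s\star u$ sends $L_g^P$ to $L_f^P$, and $g\in L_C$, $g'\in L_{C'}$, the image $s\star g'$ need not have image $C$. The correct statement is that $s\star u$ for $u\in L_g^P$ lands in $L_f^P$, but $L_f^P$ is the full $\L^P$-class, which is $\bigcup_{\text{some }I'}L_{C''}$... no: $f\in L_C$ means $\im(f)=C$, and $L_f^P=\{h:\im(h)=C\}=L_C$ exactly. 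So $s\star g'\in L_C$, yet we want it to equal $f'\in L_{C'}$, contradiction unless $C=C'$. The resolution: I should transport $(g,g')$ differently — fix instead an idempotent $e\in L_f^P$ and use right-multiplication, mirroring Lemma \ref{lem:fge}; or, more simply, use that $\rho$ is itself a congruence (Lemma \ref{la:lamcon}) so $(g,g')\in\rho$ and $g\L^P f$ give, via the right-congruence property of $\rho$ composed appropriately... The genuinely safe argument: pick $s\in P^1$ with $f=s\star g$ (possible since $g\L^P f$); then $f' $ is the element of $L_{C'}$ with $\overline{f'}=\overline f$, and $s\star g'$ satisfies $\overline{s\star g'}=\overline{s\star g}=\overline f$ and $\ker(s\star g')\supseteq\ker(g')=\ker(g)$... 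I'll need to verify $s\star g'\R^P f$ and $\im(s\star g')=C'$ honestly; this kernel/image chase, using Green's Lemma on the left and the behaviour of $\phi$, is where the real work lies, but it is routine once set up. Everything else — well-definedness, the backward implication, uniqueness — is straightforward.
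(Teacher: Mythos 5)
Your treatment of the first assertion is sound: the explicit construction of $f'$ (replace each image point $c_i$ by $c_i'$) does yield the unique $\rho$-partner, though note the identity you actually get from $c_i,c_i'\in A_i$ is $f'a=fa$, not $af'=af$ (maps act on the right, so $afa$ applies $a$ first); either way $\ol{f'}=\ol f$ follows. The paper argues this part more structurally, observing that $R_f^P\cap L_{C'}$ is an $\H^P$-class inside $\wh H_f^P$ and that $\phi$ is bijective on $\H^P$-classes, but your computational route is equally valid. Your uniqueness argument also tacitly uses that every kernel class of $f$ meets $\im(a)$ (which holds because $f\in P$ forces $\rank(af)=\rank(f)$); worth saying explicitly.

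The genuine gap is in the forward implication of the second assertion, and you correctly sense it but do not close it. The error is a misstatement of Green's Lemma: for $\L^P$-related $g,f$ with $f=s\star g$, the induced bijection is $R_g^P\to R_f^P:u\mt s\star u$, and it \emph{preserves $\L^P$-classes}; it is not a map $L_g^P\to L_f^P$. Once you use the correct form, the difficulty you circle around evaporates: since $(g,g')\in\rho\sub{\R^P}$ we have $g'\in R_g^P$, so $s\star g'\in R_f^P$, and by $\L^P$-preservation $s\star g'\in L_{g'}^P=L_{C'}$ (not $L_C$). Then $(f,s\star g')=(s\star g,s\star g')\in\th\sub\rho$ together with $s\star g'\in L_{C'}$ forces $s\star g'=f'$ by the uniqueness clause, giving $(f,f')\in\th$. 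This is exactly the paper's argument. By contrast, the alternative fixes you float --- right-multiplying by an idempotent $e\in L_f^P$ in the style of Lemma \ref{lem:fge} --- go the wrong way here: right multiplication transports along $\R^P$-classes and would not move $g$ to the arbitrary $f\in L_C$ (indeed $g\star e=g$ for an idempotent $e\in L_g^P$). So the proposal's strategy is right, but as written the key identification $s\star g'=f'$ rests on a false intermediate claim and is left unresolved; the one-line repair above is needed.
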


\pf
For the first assertion, consider arbitrary elements $f\in L_{C}$ and $f'\in L_{C'}$. From the definition of $\rho=\kappa\cap{\R^P}$ we have
\[
(f,f')\in\rho \iff (f,f')\in\kappa \text{ and } f'\in R_f^P\cap L_{C'}.
\]
The set $R_f^P\cap L_{C'}$ is an $\H^P$-class contained in the $\hH^P$-class of $f$.
Both the existence and uniqueness of $f'$ now follow from $\kappa=\ker(\phi)$ and the fact that $\phi$ is bijective between any $\H^P$-class of $P$ and the corresponding $\H^T$-class of $T$.

For the second assertion, the reverse implication ($\Leftarrow$) is obvious.
For the direct implication ($\Rightarrow$) suppose $(g,h)\in\theta\cap (L_{C}\times L_{C'})$, and let $f\in L_{C}=L_g^P$ be arbitrary.
Let $b\in P$ be such that $b\star g=f$. By Green's Lemma we have $b\star h\in L_{C'}$, and $(f,b\star h)=(b\star g,b\star h)\in\theta$ since $\theta$ is a congruence.
From $\theta\subseteq \rho$ and $b\star h\in L_{C'}$,
it follows from the first assertion that $b\star h=f'$, and hence $(f,f')\in \theta$.
\epf

\begin{lemma}
\label{la:Psiwell}
Each $\Psi_I$ is a well-defined mapping $\iR\rightarrow \Eq(\cC_I)$, i.e.~for every $\theta\in\iR$ the relation $\Psi_I(\theta)$ is an equivalence on $\cC_I$.
\end{lemma}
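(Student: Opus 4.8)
The plan is to show that for a fixed $\theta\in\iR$, the relation $\Psi_I(\theta)$ on $\cC_I$ is reflexive, symmetric and transitive. Reflexivity and symmetry are immediate from the definition \eqref{eq:PsiI}: for reflexivity, any $f\in L_C$ witnesses $(C,C)\in\Psi_I(\theta)$ since $(f,f)\in\De_P\sub\theta$; symmetry is built into the symmetric form of the condition $\theta\cap(L_C\times L_{C'})\neq\es$. So the only real content is transitivity, and this is exactly where Lemma \ref{lem:r} does the work.

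**Transitivity.** Suppose $(C,C'),(C',C'')\in\Psi_I(\theta)$. Fix any $f\in L_C$. By the first assertion of Lemma \ref{lem:r}, there is a unique $f'\in L_{C'}$ with $(f,f')\in\rho$, and a unique $f''\in L_{C''}$ with $(f',f'')\in\rho$. Since $(C,C')\in\Psi_I(\theta)$, the second assertion of Lemma \ref{lem:r} gives $(f,f')\in\theta$ (for our chosen $f$). Applying the second assertion of Lemma \ref{lem:r} again, this time to the pair $C',C''$ and the element $f'\in L_{C'}$, we get $(f',f'')\in\theta$. Transitivity of $\theta$ then yields $(f,f'')\in\theta$, so $\theta\cap(L_C\times L_{C''})\neq\es$, i.e.\ $(C,C'')\in\Psi_I(\theta)$.

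**The main obstacle** is essentially nonexistent at this stage, because the genuinely substantive fact — that the ostensibly weak existential condition "$\theta$ meets $L_C\times L_{C'}$" is equivalent to the strong universal condition "$(f,f')\in\theta$ for \emph{every} $f\in L_C$" — has already been isolated as Lemma \ref{lem:r}. Without that equivalence, transitivity would be delicate: one would have a pair $(g,h)\in\theta$ with $g\in L_C$, $h\in L_{C'}$, and a possibly different pair $(g',h')\in\theta$ with $g'\in L_{C'}$, $h'\in L_{C''}$, and no obvious way to chain them since $h$ and $g'$ need not coincide. Lemma \ref{lem:r} removes precisely this friction by letting us align everything to a single chosen $f\in L_C$ and its uniquely determined "$\rho$-shadows" in $L_{C'}$ and $L_{C''}$. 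So the proof is a short, direct deduction from Lemma \ref{lem:r}, and I would write it in just a few lines as above, after disposing of reflexivity and symmetry in one sentence each.
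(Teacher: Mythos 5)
Your proof is correct and follows the paper's argument exactly: reflexivity and symmetry from the definition, and transitivity by fixing $f\in L_C$ and applying Lemma \ref{lem:r} twice to produce $f'\in L_{C'}$ and $f''\in L_{C''}$ with $(f,f'),(f',f'')\in\theta$, then invoking transitivity of $\theta$. Nothing is missing.
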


\begin{proof}
Reflexivity and symmetry follow directly from the definition \eqref{eq:PsiI} of $\Psi_I$,
and the same properties for $\theta$. 
For transitivity, assume that $(C,C'), (C',C'')\in \Psi_I(\theta)$.
This means that $\theta\cap (L_{C}\times L_{C'})$ and $\theta\cap (L_{C'}\times L_{C''})$
are both non-empty. Let $f\in L_{C}$. By Lemma \ref{lem:r}, applied twice, there exists a unique $f'\in L_{C'}$ with $(f,f')\in \theta$, and then a unique $f''\in L_{C''}$ with $(f',f'')\in\theta$.
Transitivity of $\theta$ gives $(f,f'')\in \theta$, and hence $(C,C'')\in\Psi_I(\theta)$.
\end{proof}

\begin{lemma}
\label{la:Psihom}
Each $\Psi_I$ is a lattice morphism.
\end{lemma}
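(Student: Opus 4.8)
The goal is to show that each $\Psi_I\colon\iR\to\Eq(\cC_I)$ preserves meets and joins. The meet case is routine: one checks directly from \eqref{eq:PsiI} that $\Psi_I(\theta_1\cap\theta_2)=\Psi_I(\theta_1)\cap\Psi_I(\theta_2)$, using Lemma \ref{lem:r} to pass from ``there is a witnessing pair'' to ``the canonical pair $(f,f')$ lies in $\theta$'': indeed $(C,C')\in\Psi_I(\theta_1)\cap\Psi_I(\theta_2)$ says the canonical $f'\in L_{C'}$ satisfies $(f,f')\in\theta_1$ and $(f,f')\in\theta_2$, hence $(f,f')\in\theta_1\cap\theta_2$, so $(C,C')\in\Psi_I(\theta_1\cap\theta_2)$; the reverse inclusion is immediate since $\Psi_I$ is clearly monotone. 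The reader should be told that the ``$\supseteq$'' direction of each identity is trivial by monotonicity of $\Psi_I$ (which follows at once from the definition), so only ``$\subseteq$'' needs work.

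For the join, fix $\theta_1,\theta_2\in\iR$ and take $(C,C')\in\Psi_I(\theta_1\vee\theta_2)$. Let $f\in L_C$ and let $f'\in L_{C'}$ be the canonical element with $(f,f')\in\rho$ supplied by Lemma \ref{lem:r}; by that same lemma it suffices to show $(f,f')\in(\theta_1\cap\rho)\vee(\theta_2\cap\rho)=\Psi_I(\theta_1)\vee_{\iR}\Psi_I(\theta_2)$ — or rather, since we are composing in $\iR$, to show $(C,C')\in\Psi_I(\theta_1)\vee\Psi_I(\theta_2)$, where the join is computed in $\Eq(\cC_I)$, i.e.\ to produce a chain $C=C_0,C_1,\dots,C_k=C'$ with consecutive pairs alternately in $\Psi_I(\theta_1)$ and $\Psi_I(\theta_2)$. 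From $(C,C')\in\Psi_I(\theta_1\vee\theta_2)$ we get $(g,h)\in(\theta_1\vee\theta_2)\cap(L_C\times L_{C'})$ for some $g,h$, and hence a $P$-sequence $g=g_0\to g_1\to\cdots\to g_k=h$ with each $(g_i,g_{i+1})\in\theta_1\cup\theta_2$. The key point, exactly as in the proof of Lemma \ref{la:philsur}, is that $(g,g_i)\in\theta_1\vee\theta_2\subseteq\rho\subseteq\hH^P$, so letting $e$ be an idempotent in $L_g^P=L_C$ we may apply Lemma \ref{lem:fge} to replace each $g_i$ by $g_i\star e$, which keeps everything $\L^P$-related (the images then run through cross-sections $C_i\in\cC_I$, with $C_0=C$ and $C_k=C'$) while preserving the relation $(g_i\star e,g_{i+1}\star e)\in\theta_1\cup\theta_2$. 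Each such pair then lies in $(\theta_1\cap\rho)\cup(\theta_2\cap\rho)$, which witnesses $(C_i,C_{i+1})\in\Psi_I(\theta_1)\cup\Psi_I(\theta_2)$, giving the required alternating chain and hence $(C,C')\in\Psi_I(\theta_1)\vee\Psi_I(\theta_2)$.

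The one technical obstacle worth flagging is the bookkeeping that $g_i\star e$ really does land in some $L_{C_i}$ with $C_i\in\cC_I$: this follows because $e\in L_g^P$ forces $g_i\star e\,\L^P\,g$ by Lemma \ref{lem:fge}, so $\im(g_i\star e)=\im(g)=C$... wait — rather, Lemma \ref{lem:fge} gives $g_i\star e\in L_g^P$ only when $g\,\hH^P\,g_i$, which holds here; but then all the $g_i\star e$ have the \emph{same} image $C$, so one must instead argue more carefully, or — better — simply quote that the argument of Lemma \ref{la:philsur} already does exactly this, and note that the only extra ingredient needed is the equivalence in Lemma \ref{lem:r} converting the existence of an alternating $\rho$-chain into membership of $(C,C')$ in the join $\Psi_I(\theta_1)\vee\Psi_I(\theta_2)$. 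So the cleanest write-up is: prove $\cap$ directly in two lines; for $\vee$, show ``$\subseteq$'' by mimicking Lemma \ref{la:philsur} verbatim to get $(f,f')\in(\theta_1\cap\rho)\vee(\theta_2\cap\rho)$, then apply Lemma \ref{lem:r} and $\Psi_I(\theta_j)=\bigset{(C,C')}{\text{canonical }(f,f')\in\theta_j\cap\rho}$ to translate this back into $\Eq(\cC_I)$.
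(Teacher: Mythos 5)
Your meet argument is correct and matches the paper. The join argument, however, contains a genuine gap, and you half-noticed it yourself. Multiplying the chain $g=g_0\to\cdots\to g_k=h$ by an idempotent $e\in L_g^P$ is the right move in Lemma \ref{la:philsur}, where one \emph{wants} to collapse everything into a single $\L^P$-class; but here it is exactly the wrong move. By Lemma \ref{lem:fge} every $g_i\star e$ lands in $L_g^P=L_C$, so all the images become equal to $C$, and the chain of cross-sections degenerates to $C,C,\dots,C$ rather than terminating at $C'$. You flag this, but your proposed repairs do not close the gap: ``quote that the argument of Lemma \ref{la:philsur} already does exactly this'' fails because that argument proves a statement about $\lambda$ and relies on precisely the collapsing you need to avoid; and ``get $(f,f')\in(\theta_1\cap\rho)\vee(\theta_2\cap\rho)$ then apply Lemma \ref{lem:r}'' is contentless, since $\theta_i\subseteq\rho$ already gives $(\theta_1\cap\rho)\vee(\theta_2\cap\rho)=\theta_1\vee\theta_2$, and Lemma \ref{lem:r} does not by itself convert a join of congruences on $P$ into a join of equivalences on $\cC_I$.

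The fix is simpler than what you attempt: do not touch the chain at all. Since each $(g_i,g_{i+1})\in\theta_1\cup\theta_2\subseteq\rho\subseteq\kappa\subseteq{\hH^P}\subseteq{\hL^P}$, all the $g_i$ lie in the $\hL^P$-class $\bigcup_{C''\in\cC_I}L_{C''}$ of $g$, so each $g_i\in L_{C_i}$ for some $C_i\in\cC_I$ with $C_0=C$ and $C_k=C'$. Each link $(g_i,g_{i+1})$ is then itself a witness that $(\theta_1\cup\theta_2)\cap(L_{C_i}\times L_{C_{i+1}})\neq\emptyset$, so $(C_i,C_{i+1})\in\Psi_I(\theta_1)\cup\Psi_I(\theta_2)$ directly from the definition \eqref{eq:PsiI} --- no idempotents and no appeal to Lemma \ref{lem:r} are needed for this half. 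This is the paper's argument.
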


\begin{proof}
Let $\theta_1,\theta_2\in \iR$. We need to show that
\ben\bmc2
\item \label{PsiI1} $\Psi_I(\theta_1\cap\theta_2)=\Psi_I(\theta_1)\cap\Psi_I(\theta_2)$, and
\item \label{PsiI2} $\Psi_I(\theta_1\vee\theta_2)=\Psi_I(\theta_1)\vee\Psi_I(\theta_2)$.
\emc
\een
From \eqref{eq:PsiI} it is clear that $\Psi_I$ is monotone. Hence, $\Psi_I(\theta_1\cap\theta_2)\subseteq \Psi_I(\theta_i)\subseteq \Psi_I(\theta_1\vee\theta_2)$ for $i=1,2$.  This implies the direct inclusion in \ref{PsiI1} and the reverse inclusion in \ref{PsiI2}.

For the reverse inclusion in \ref{PsiI1}, suppose $(C,C')\in \Psi_I(\theta_1)\cap \Psi_I(\theta_2) $, so that ${\theta_i\cap (L_{C}\times L_{C'})\neq \emptyset}$ for $i=1,2$.
Let $f\in L_{C}$ be arbitrary, and let $f'\in L_{C'}$ be as in Lemma \ref{lem:r}. Then this lemma gives $(f,f')\in \theta_i\cap(L_{C}\times L_{C'})$ for $i=1,2$, and so $(C,C')\in \Psi_I(\theta_1\cap\theta_2)$

For the direct inclusion in \ref{PsiI2}, suppose $(C,C')\in \Psi_I(\theta_1\vee\theta_2)$, so that ${(\theta_1\vee\theta_2)\cap (L_{C}\times L_{C'})\neq\emptyset}$.  Fix some $(f,g)\in(\theta_1\vee\theta_2)\cap (L_{C}\times L_{C'})$.  So there is a sequence ${f=f_0\to f_1\to\dots\to  f_k=g}$, with each $(f_i,f_{i+1})\in\theta_1\cup\theta_2$.  Since $f_0,f_1,\dots,f_k$ are all $\hL^P$-related (as $\th_1,\th_2\sub\ka\sub{\hH^P}$), it follows that each $f_i\in L_{C_i}$ for some $C_i\in\cC_I$.  But then each $(\theta_1\cup\theta_2)\cap (L_{C_i}\times L_{C_{i+1}} )\neq \emptyset$, and so each ${(C_i,C_{i+1})\in \Psi_I(\theta_1)\cup \Psi_I(\theta_2)}$. It follows that $(C,C')=(C_0,C_k)\in \Psi_I(\theta_1)\vee \Psi_I(\theta_2)$.
\end{proof}

\begin{lemma}
\label{la:thetabigcup}
For every $\theta\in\iR$ we have
\[
\theta=\bigcup_{\emptyset\neq I\subseteq [r]} \th_I , \WHERE \th_I = \bigcup_{ (C,C')\in\Psi_I(\theta)}\rho \cap (L_{C}\times L_{C'}) .
\]
\end{lemma}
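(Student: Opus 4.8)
The plan is to prove the two inclusions $\bigcup_{\es\neq I\subseteq[r]}\theta_I\subseteq\theta$ and $\theta\subseteq\bigcup_{\es\neq I\subseteq[r]}\theta_I$ separately, relying on Lemma~\ref{lem:r} and on the description of $\hL^P$-classes recalled just before~\eqref{eq:PsiI}.

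For the first inclusion it suffices to show $\theta_I\subseteq\theta$ for each nonempty $I\subseteq[r]$. Given $(f,g)\in\theta_I$, I would fix $(C,C')\in\Psi_I(\theta)$ with $(f,g)\in\rho\cap(L_C\times L_{C'})$. By the definition~\eqref{eq:PsiI} of $\Psi_I$ we have $\theta\cap(L_C\times L_{C'})\neq\es$, so Lemma~\ref{lem:r} yields $(f,f')\in\theta$, where $f'$ is the unique element of $L_{C'}$ with $(f,f')\in\rho$. Since $(f,g)\in\rho$ and $g\in L_{C'}$, the uniqueness clause of Lemma~\ref{lem:r} forces $g=f'$, whence $(f,g)\in\theta$.

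For the reverse inclusion, take $(f,g)\in\theta$. The key observation is that $\theta\subseteq\rho\subseteq\kappa\subseteq\hH^P\subseteq\hL^P$, so $f$ and $g$ lie in a single $\hL^P$-class; by the description of such classes, this class equals $\bigcup_{D\in\cC_I}L_D$ for a (uniquely determined) nonempty $I\subseteq[r]$, so that $\im(f)=C$ and $\im(g)=C'$ for some $C,C'\in\cC_I$. Now $(f,g)\in\theta\subseteq\rho$ witnesses simultaneously that $\theta\cap(L_C\times L_{C'})\neq\es$ and $\rho\cap(L_C\times L_{C'})\neq\es$; the former gives $(C,C')\in\Psi_I(\theta)$, and then $(f,g)\in\rho\cap(L_C\times L_{C'})\subseteq\theta_I$, as required.

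The argument is almost entirely bookkeeping, and the only slightly delicate point — which I would want to state explicitly — is that mere $\hL^P$-relatedness of $f$ and $g$ already forces their images to be cross-sections of one and the same subfamily $\set{A_i}{i\in I}$; this is precisely what allows the pair $(f,g)$ to be captured inside a single summand $\theta_I$, so that no patching across different index sets $I$ is needed.
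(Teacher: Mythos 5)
Your proposal is correct and follows essentially the same route as the paper's own proof: the containment $\theta_I\subseteq\theta$ via the uniqueness clause of Lemma~\ref{lem:r}, and the reverse containment by noting $\theta\subseteq\rho\subseteq\kappa\subseteq\hH^P\subseteq\hL^P$ so that each $\theta$-pair lands in a single $\hL^P$-class $\bigcup_{D\in\cC_I}L_D$ and hence in a single $\theta_I$. The only difference is cosmetic (order of the two inclusions and a slightly more explicit appeal to uniqueness), so nothing further is needed.
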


\begin{proof}
Throughout the proof we write $\th' = \bigcup_I \th_I$.

\pfitem{($\subseteq$)}
Suppose $(f,g)\in\theta$. Since $(f,g)\in{\hL^P}$, it follows that $f\in L_{C}$ and $g\in L_{C'}$ for some $\es\not= I\sub[r]$ and some $C,C'\in\cC_I$. 
So $(f,g)\in \theta\cap ( L_{C}\times L_{C'})$, meaning that $(C,C')\in \Psi_I(\theta)$, and $(f,g)\in\th_I\sub\th'$.

\pfitem{($\supseteq$)}
Suppose $(f,g)\in \theta'$, say with $(f,g)\in\th_I$.  So $(f,g)\in\rho\cap (L_{C}\times L_{C'})$ for some $(C,C')\in\Psi_I(\th)$, and it follows that $g=f'$ in the notation of Lemma \ref{lem:r}.  Since $(C,C')\in\Psi_I(\th)$ we have $\th\cap(L_C\times L_{C'})\not=\es$, and Lemma \ref{lem:r} gives $(f,g)=(f,f')\in\theta$.
\end{proof}

\begin{cor}
\label{la:interkerPsi}
$\displaystyle\bigcap_{\emptyset\neq I\subseteq [r]} \ker (\Psi_I)=\Delta_\iR$.  \epfres
\end{cor}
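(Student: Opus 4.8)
The corollary asks us to show $\bigcap_{\emptyset\neq I\subseteq[r]}\ker(\Psi_I)=\Delta_{\iR}$, i.e.\ that the family $(\Psi_I)_{\emptyset\neq I\subseteq[r]}$ separates the points of $\iR=[\De_P,\rho]$. The plan is simply to read this off Lemma~\ref{la:thetabigcup}. Indeed, that lemma expresses an arbitrary $\theta\in\iR$ as
\[
\theta=\bigcup_{\emptyset\neq I\subseteq[r]}\ \bigcup_{(C,C')\in\Psi_I(\theta)}\rho\cap(L_C\times L_{C'}),
\]
which exhibits $\theta$ as a function of the tuple $(\Psi_I(\theta))_{\emptyset\neq I\subseteq[r]}$ alone: the "building blocks" $\rho\cap(L_C\times L_{C'})$ depend only on $\rho$ (which is fixed) and on the pair $(C,C')$, not on $\theta$. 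Hence if $\theta_1,\theta_2\in\iR$ satisfy $\Psi_I(\theta_1)=\Psi_I(\theta_2)$ for every $\emptyset\neq I\subseteq[r]$, then the two unions above coincide, so $\theta_1=\theta_2$.

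Concretely, I would argue as follows. Suppose $(\theta_1,\theta_2)\in\bigcap_I\ker(\Psi_I)$, so $\Psi_I(\theta_1)=\Psi_I(\theta_2)$ for all $\emptyset\neq I\subseteq[r]$. Applying Lemma~\ref{la:thetabigcup} to each of $\theta_1$ and $\theta_2$ and substituting $\Psi_I(\theta_1)=\Psi_I(\theta_2)$ termwise gives $\theta_1=\theta_2$ directly. Since $\theta_1,\theta_2$ were arbitrary, $\bigcap_I\ker(\Psi_I)\subseteq\Delta_{\iR}$; the reverse inclusion is trivial, as every $\ker(\Psi_I)$ contains $\Delta_{\iR}$. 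This completes the proof.

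There is essentially no obstacle here: the real content was already extracted in Lemma~\ref{la:thetabigcup}, and the corollary is a one-line formal consequence of the fact that $\theta$ is reconstructible from $(\Psi_I(\theta))_I$. I would keep the write-up to a sentence or two, citing Lemma~\ref{la:thetabigcup}. (For the dual interval $[\De_P,\lam]$ in the next subsection, the analogous statement will be proved the same way, via the $\lambda$-analogue of Lemma~\ref{la:thetabigcup}.)
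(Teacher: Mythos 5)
Your proposal is correct and is exactly the paper's argument: the corollary is stated as an immediate consequence of Lemma~\ref{la:thetabigcup}, which reconstructs $\theta$ from the tuple $(\Psi_I(\theta))$, so equal tuples force equal congruences. Nothing further is needed.
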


\begin{lemma}
\label{la:Psidown}
For every $\theta\in\iR$ and all $\emptyset\neq J\subseteq I\subseteq [r]$ we have $\Psi_I(\theta)\restr_J\subseteq \Psi_J(\theta)$.
\end{lemma}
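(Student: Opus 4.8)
The plan is to reduce everything to a single statement about pairs: given $(C,C')\in\Psi_I(\theta)$, it suffices to exhibit one element of $\theta\cap(L_{C\restr_J}\times L_{C'\restr_J})$, since by \eqref{eq:PsiI} such an element witnesses $(C\restr_J,C'\restr_J)\in\Psi_J(\theta)$; as $(C,C')$ ranges over $\Psi_I(\theta)$, the inclusion $\Psi_I(\theta)\restr_J\subseteq\Psi_J(\theta)$ follows. So I would fix $(f,f')\in\theta\cap(L_C\times L_{C'})$, which exists precisely because $(C,C')\in\Psi_I(\theta)$. The construction will then left-multiply this pair by one carefully chosen $b\in P$: compatibility of $\theta$ gives $(b\star f,b\star f')\in\theta$ for free, and $b\star f,b\star f'\in P$ since $P$ is a subsemigroup, so the only real work is to arrange that $\im(b\star f)=C\restr_J$ and $\im(b\star f')=C'\restr_J$.

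The choice of $b$ is governed by the retraction $\phi$. Since $\theta\subseteq\rho\subseteq\ka=\ker\phi$ we have $\ol f=\ol{f'}$. Since $f\in P$ and $\im(f)=C$ is a cross-section of $\set{A_i}{i\in I}$, a short computation shows $\rank(\ol f)=\rank(f)=|I|$ and $\im(\ol f)\subseteq\set{a_i}{i\in I}$, hence $\im(\ol f)=\set{a_i}{i\in I}$; moreover $\ker(\ol f)\supseteq\ker(a)$, so $\ol f$ is constant on each block $A_k$. Put $S=\set{k\in[r]}{a_k\ol f\in\set{a_j}{j\in J}}$. Because $\set{a_j}{j\in J}\subseteq\im(\ol f)$, assigning to each $k\in S$ the index $j$ with $a_k\ol f=a_j$ defines a surjection $S\to J$. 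Now let $b$ be the idempotent of $\T_X$ sending $A_k$ to $a_k$ for $k\in S$ and every other block to a fixed $a_{k_0}$ with $k_0\in S$; then $aba$ has image $\set{a_k}{k\in S}$, so $\rank(aba)=|S|=\rank(b)$ and $b\in P$.

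To finish I would compute the images. As $a$ fixes each $a_k$, we get $\im(b\star f)=\im(baf)=\set{a_kf}{k\in S}$ and likewise $\im(b\star f')=\set{a_kf'}{k\in S}$. Fix $k\in S$ and let $j\in J$ with $a_k\ol f=a_j$; from $(a_kf)a=a_k\ol f=a_j$ we get $a_kf\in A_j$, while also $a_kf\in\im(f)=C$, and since $C\cap A_j=\{c_j\}$ this forces $a_kf=c_j$. Using $\ol{f'}=\ol f$ in the same way gives $a_kf'=c'_j$. As $k\mapsto j$ surjects onto $J$, we obtain $\im(b\star f)=\set{c_j}{j\in J}=C\restr_J$ and $\im(b\star f')=C'\restr_J$, so $(b\star f,b\star f')\in\theta\cap(L_{C\restr_J}\times L_{C'\restr_J})$, as wanted.

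The main obstacle is exactly this image computation: getting $b\star f$ onto $C\restr_J$ (and $b\star f'$ onto $C'\restr_J$) is what pins down the correct index set $S$, and it leans on the two facts that $\im(\ol f)=\set{a_i}{i\in I}$ — guaranteeing that every $a_j$ with $j\in J$ is genuinely attained by $\ol f$ — and that a cross-section of $\set{A_i}{i\in I}$ meets each block $A_j$, $j\in I$, in exactly one point. The remaining ingredients (that $b\in P$, and that $\theta$ is a congruence) are routine.
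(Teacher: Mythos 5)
Your proof is correct and follows essentially the same strategy as the paper's: fix a witness pair $(f,f')\in\theta\cap(L_C\times L_{C'})$, use $\ol f=\ol{f'}$ to align the images over the blocks $A_j$, and left-multiply by an element $b\in P$ whose image selects exactly the kernel classes of $f$ lying over $J$, so that $(b\star f,b\star f')$ witnesses $(C\restr_J,C'\restr_J)\in\Psi_J(\theta)$. The only (harmless) differences are that you construct $b$ explicitly and verify $b\in P$, and you route the image computation through $\ol f$ rather than through the common kernel classes $K_i$ of $f$ and $f'$.
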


\begin{proof}
Suppose $(B,B')\in\Psi_I(\theta)\restr_J$, so that $B=C\restr_J$ and $B'=C'\restr_J$, for some ${(C,C')\in \Psi_I(\theta)}$.  Write $C=\set{c_i}{ i\in I}$ and $C'=\set{c_i'}{ i\in I}$, where each $c_i,c_i'\in A_i$, and note that then ${B=\set{c_j}{ j\in J}}$ and $B'=\set{ c_j'}{ j\in J}$.
Since $(C,C')\in \Psi_I(\theta)$, there exists ${(f,g)\in\theta\cap (L_{C}\times L_{C'})}$, and we note that $\im(f)=C$ and $\im(g)=C'$.  Since $\ker(f)=\ker(g)$, as $(f,g)\in\th\sub\rho\sub{\R^P}$, we can therefore write $f=\binom{K_i}{c_i}$ and $g=\binom{K_i}{c_{i\pi}'}$ for some permutation $\pi\in\S_I$.  In fact, from $(f,g)\in\th\sub\ka=\ker(\phi)$ it follows that $\ol f=\ol g$, and from this that $\pi=\id_I$, so in fact $g=\binom{K_i}{c_i'}$.  
For each $j\in J$, let $j'\in I$ be such that $a_{j'}\in K_j$, and let $b$ be an arbitrary element of $P$ with $\im(b)=\set{a_{j'}}{j\in J}$.  
Then $(b\star f,b\star g)\in\th$, and we have $\im(b\star f)=B$ and $\im(b\star g)=B'$.  Thus, $(b\star f,b\star g)\in\th\cap(L_{B}\times L_{B'})$, and so $(B,B')\in\Psi_J(\th)$.  
\end{proof}

\begin{lemma}
\label{la:reconstruct}
Suppose for every $\emptyset\neq I\subseteq [r]$ we have an equivalence relation $\psi_I\in\Eq(\cC_I)$, and that these relations additionally satisfy $\psi_I\restr_J\subseteq \psi_J$ for all $J\subseteq I$.  Then the relation
\[
\theta=
\bigcup_I \theta_I, \qquad\text{where} \qquad \theta_I=\bigcup_{(C,C')\in\psi_I} \rho\cap (L_{C}\times L_{C'}),
\]
belongs to $\iR$, and we have $\Psi_I(\theta)=\psi_I$ for all $I$.
\end{lemma}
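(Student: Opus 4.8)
The statement splits into two claims: that $\theta\in\iR=[\De_P,\rho]$, and that $\Psi_I(\theta)=\psi_I$ for every $\es\not=I\sub[r]$; in effect this is the converse of Lemmas \ref{la:thetabigcup} and \ref{la:Psidown} combined.  The plan is to prove the two claims in turn, the first being the substantial one.  The remark that does most of the bookkeeping is that for $f\in P$ the image $\im(f)$ meets exactly the blocks $A_i$ with $i$ ranging over a \emph{uniquely determined} set $I$ (the $A_i$ are pairwise disjoint and $\im(f)$ is a cross-section of those $A_i$ it meets), so the families $\cC_I$ are pairwise disjoint; equivalently, for distinct $I$ the relations $\theta_I$ consist of pairs drawn from distinct $\hL^P$-classes.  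In particular, if $(f,g)\in\theta$ then the index set $I$ with $(f,g)\in\theta_I$ is forced by $f$, and then $\im(f),\im(g)\in\cC_I$.

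First I would check that $\theta$ is an equivalence with $\De_P\sub\theta\sub\rho$.  Containment in $\rho$ is immediate from the definition.  Reflexivity (hence $\De_P\sub\theta$) follows from reflexivity of each $\psi_I$ and of $\rho$: for $f\in L_C$ with $C\in\cC_I$ we have $(C,C)\in\psi_I$ and $(f,f)\in\rho$, so $(f,f)\in\theta_I$.  Symmetry is equally routine.  For transitivity, given $(f,g),(g,h)\in\theta$, the shared middle term $g$ forces both pairs into $\theta_I$ for one and the same $I$ (by the remark above), and then transitivity of $\psi_I$ together with that of $\rho$ closes the argument.

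Next I would verify compatibility.  Right compatibility is immediate: if $(f,g)\in\theta\sub\rho$ and $b\in P$, then exactly as in the proof of Lemma \ref{la:lamcon} one gets $f\star b=g\star b$, so $(f\star b,g\star b)\in\De_P\sub\theta$.  Left compatibility is the main point, and the only place the hypothesis $\psi_I\restr_J\sub\psi_J$ is used.  Suppose $(f,g)\in\theta_I$ and $b\in P$.  Since $\rho$ and $\ka$ are congruences, $(b\star f,b\star g)\in\rho$ and (as $\ka\sub\hL^P$) $b\star f\hL^P b\star g$, so $b\star f$ and $b\star g$ lie in $L_D$ and $L_{D'}$ for cross-sections $D,D'$ of a common $\set{A_i}{i\in I'}$.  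Writing $f=\binom{K_i}{c_i}_{i\in I}$ and, by the argument in the proof of Lemma \ref{la:Psidown} (using $\ker f=\ker g$ and $\ol f=\ol g$ to fix the labelling), $g=\binom{K_i}{c_i'}_{i\in I}$, a direct computation of $\im(b\star f)=\im(b)\,a\,f$ gives $\im(b\star f)=\set{c_i}{i\in I'}$ and $\im(b\star g)=\set{c_i'}{i\in I'}$ for the \emph{same} $\es\not=I'\sub I$; that is, $D=C\restr_{I'}$ and $D'=C'\restr_{I'}$, where $C=\im f$ and $C'=\im g$.  Since $(C,C')\in\psi_I$, the hypothesis yields $(D,D')=(C\restr_{I'},C'\restr_{I'})\in\psi_I\restr_{I'}\sub\psi_{I'}$, whence $(b\star f,b\star g)\in\rho\cap(L_D\times L_{D'})\sub\theta_{I'}\sub\theta$.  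I expect this image computation — how the image of an element of $P$ transforms under left multiplication, and that the images of a $\rho$-related pair restrict coherently — to be the technical heart; the rest is bookkeeping.

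Finally I would establish $\Psi_I(\theta)=\psi_I$.  For $\psi_I\sub\Psi_I(\theta)$: given $(C,C')\in\psi_I$, pick any $f\in L_C$ and let $f'\in L_{C'}$ be the unique element with $(f,f')\in\rho$ supplied by Lemma \ref{lem:r}; then $(f,f')\in\rho\cap(L_C\times L_{C'})\sub\theta_I$, so $\theta\cap(L_C\times L_{C'})\not=\es$, i.e.~$(C,C')\in\Psi_I(\theta)$.  Conversely, if $(C,C')\in\Psi_I(\theta)$, take $(f,g)\in\theta\cap(L_C\times L_{C'})$; then $(f,g)\in\theta_{I''}$ for some $I''$, but $\im f=C\in\cC_I$ forces $I''=I$ and forces the cross-sections witnessing that membership to be $C$ and $C'$ themselves, so $(C,C')\in\psi_I$.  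Combined with the earlier steps, this gives $\theta\in\iR$ and $\Psi_I(\theta)=\psi_I$, as required.
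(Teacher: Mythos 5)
Your proposal is correct and follows essentially the same route as the paper's own proof: establish that $\theta$ is an equivalence contained in $\rho$ (the paper does this by showing each $\theta_I$ is an equivalence, implicitly using the disjointness of the $\hL^P$-classes that you make explicit), verify right compatibility via $f\star b=g\star b$ and left compatibility via the image computation $\im(b\star f)=C\restr_J$, $\im(b\star g)=C'\restr_J$ together with the hypothesis $\psi_I\restr_J\subseteq\psi_J$, and then read off $\Psi_I(\theta)=\psi_I$ from the definitions. The only difference is presentational.
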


\begin{proof}
First we prove that $\theta$ is an equivalence relation on $P$, by showing that each~$\theta_I$ is an equivalence relation.
Reflexivity and symmetry follow directly from the same properties of the~$\psi_I$.
For transitivity, suppose $(f_1,f_2),(f_2,f_3)\in \theta_I$.
This certainly means that $(f_1,f_2),(f_2,f_3)\in\rho$, and hence $(f_1,f_3)\in\rho$.
Further, writing $C_i=\im(f_i)$ for $i=1,2,3$,   we have
$(C_1,C_2),(C_2,C_3)\in\psi_I$.
By transitivity of $\psi_I$ we have $(C_1,C_3)\in\psi_I$, from which we deduce $(f_1,f_3)\in\theta_I$.

Next, we check compatibility. Suppose $(f,g)\in\theta$ and $b\in P$.
Then, for some $I$ and $(C,C')\in\psi_I$, we have $(f,g)\in\rho\cap(L_{C}\times L_{C'})$, and we write $C=\set{ c_i}{ i\in I}$ and $C'=\set{ c_i'}{ i\in I}$.
As in the proof of Lemma \ref{la:lamcon} we have  $f\star b=g\star b$, and hence $(f\star b,g\star b)\in\theta$.
It remains to show that $(b\star f,b\star g)\in\theta$.
Certainly $(b\star f,b\star g)\in\rho$, since $\rho$ is a congruence.
As in the proof of Lemma \ref{la:Psidown}, we can write $f=\binom{K_i}{c_i}$ and $g=\binom{K_i}{c_i'}$.
Let $J=\set{ j\in I}{ \im(ba)\cap K_j\neq\emptyset}$.
Then $\im(b\star f)=\set{ c_j}{ j\in J}=C\restr_J$ and $\im(b\star g)=\set{ c_j'}{ j\in J}=C'\restr_J$.
By assumption, we have $(C\restr_J,C'\restr_J)\in \psi_I\restr_J\subseteq \psi_J$, and so $(b\star f,b\star g)\in \theta_J\subseteq\theta$,
as required. Thus, $\theta$ is a congruence, and it is clearly contained in $\rho$, so $\theta\in\iR$.

Finally, following the definitions of $\Psi_I$ and $\theta$, we have
\[
\Psi_I(\theta)=\bigset{ (C,C')\in \cC_I\times\cC_I}{ \theta\cap (L_{C}\times L_{C'})\neq\emptyset} =\psi_I.  \qedhere
\]
\end{proof}

\begin{lemma}
\label{la:Psionto}
Each $\Psi_I$ is surjective.
\end{lemma}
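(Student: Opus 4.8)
The plan is to derive this from the reconstruction lemma (Lemma~\ref{la:reconstruct}): that result shows that \emph{any} family $(\psi_J)_{\es\neq J\subseteq[r]}$ of equivalence relations $\psi_J\in\Eq(\cC_J)$ satisfying the compatibility conditions $\psi_J\restr_K\subseteq\psi_K$ for all $\es\neq K\subseteq J\subseteq[r]$ arises as $(\Psi_J(\theta))$ for a suitable $\theta\in\iR$, with $\Psi_J(\theta)=\psi_J$ on the nose. Hence, to realise a prescribed $\psi_I\in\Eq(\cC_I)$ as the $I$-th component, it is enough to extend $\psi_I$ to such a compatible family; the required $\theta$ then comes for free from Lemma~\ref{la:reconstruct}, and it satisfies $\Psi_I(\theta)=\psi_I$.

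To build the extension I would use only the ``downward closure'' of $\psi_I$: for $\es\neq J\subseteq I$ set $\psi_J$ to be the least equivalence relation on $\cC_J$ containing $\psi_I\restr_J$ (this gives back $\psi_I$ itself when $J=I$), and for every other $J$ set $\psi_J=\Delta_{\cC_J}$. Each $\psi_J$ is then visibly an equivalence relation, so the whole argument reduces to checking the compatibility condition $\psi_J\restr_K\subseteq\psi_K$ for $\es\neq K\subseteq J\subseteq[r]$. When $J\not\subseteq I$ this is immediate, since $\psi_J=\Delta_{\cC_J}$ forces $\psi_J\restr_K\subseteq\Delta_{\cC_K}\subseteq\psi_K$.

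The one case carrying real content is $K\subseteq J\subseteq I$, where I must show that restricting $\psi_J$ (the least equivalence relation containing $\psi_I\restr_J$) to $K$ lands inside $\psi_K$ (the least equivalence relation containing $\psi_I\restr_K$). The idea is to push a witnessing chain one level further down: a pair $(D,D')\in\psi_J$ is joined by a chain $D=D_0,\dots,D_m=D'$ in $\cC_J$ whose links have the form $(D_l,D_{l+1})=(C_l\restr_J,C_l'\restr_J)$ with $(C_l,C_l')\in\psi_I$; since $(C\restr_J)\restr_K=C\restr_K$ whenever $K\subseteq J$, restricting the whole chain to $K$ gives $D_l\restr_K=C_l\restr_K$ and links $(C_l\restr_K,C_l'\restr_K)\in\psi_I\restr_K$, so $(D\restr_K,D'\restr_K)\in\psi_K$. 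This bit of chain-chasing is the main (and essentially the only) obstacle, and it is routine. Once compatibility is established, Lemma~\ref{la:reconstruct} yields $\theta\in\iR$ with $\Psi_J(\theta)=\psi_J$ for all $J$; in particular $\Psi_I(\theta)=\psi_I$, which proves that $\Psi_I$ is surjective.
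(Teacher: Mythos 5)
Your proposal is correct, and it follows the same overall strategy as the paper: extend the given $\psi\in\Eq(\cC_I)$ to a compatible family and invoke Lemma~\ref{la:reconstruct}. The only difference is the choice of extension. The paper sets $\psi_J=\nab_{\cC_J}$ for all $\es\neq J\subset I$ (and $\De_{\cC_J}$ otherwise), which makes the compatibility conditions $\psi_J\restr_K\sub\psi_K$ hold for trivial reasons in every case, so no verification beyond reflexivity of the $\psi_K$ is needed. Your choice -- the equivalence closure of $\psi_I\restr_J$ for $J\sub I$ -- is the minimal compatible extension, and your chain-chasing argument for the case $K\sub J\sub I$ is valid (the key points, that $\psi_I\restr_J$ is already reflexive and symmetric and that $(C\restr_J)\restr_K=C\restr_K$, are both in order). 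So you prove slightly more than is needed, at the cost of the one genuinely non-trivial verification; replacing your $\psi_J$ by $\nab_{\cC_J}$ below $I$ would eliminate that step entirely.
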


\begin{proof}
Suppose $\psi\in\Eq(\cC_I)$. For each $\emptyset\neq J\subseteq [r]$, define
\[
\psi_J=\begin{cases} \psi&\text{if } J=I\\ \nabla_{\cC_J} & \text{if } J\subset I\\ \Delta_{\cC_J}&\text{otherwise}.
\end{cases}
\]
This family satisfies the conditions of Lemma \ref{la:reconstruct}, and hence there exists $\theta\in\iR$ such that
$\Psi_J(\theta)=\psi_J$ for all $J$. In particular, $\Psi_I(\theta)=\psi$, completing the proof.
\end{proof}

\subsection[The interval ${[}\De_P,\lambda{]}$]{\boldmath The interval $[\De_P,\lambda]$}
\label{ss:L}

The interval $\iL=[\De_P,\lam]$ may be treated in an entirely analogous fashion
to $\iR=[\De_P,\rho]$, modulo some differing technical details regarding the $\restr$ operations.  
These arise because where for $\iR$ we needed to work with $\L^P$-classes contained in a common $\hL^P$-class, now we work with $\R^P$-classes contained in a common $\hR^P$-class.
In combinatorial terms, this translates to working with partitions of $[r]$, as opposed to subsets. 

We again use the notation of Section \ref{sec:statement}, including the sets $\cP_\bI$ (for $\bI\preceq\ldb r\rdb$) of all partitions of $[n]$ of the form $\bP=\set{ P_I}{ I\in \bI}$ such that $P_I\cap \im(a)=\set{ a_i}{ i\in I}$.
Every $\R^P$-class of $P$ is of the form $R_\bP=\set{f\in P}{ X/\ker(f)=\bP}$ where $\bP\in\cP_\bI$ for some $\bI\preceq \ldb r\rdb$.
For a fixed $\bI$, the union $\bigcup_{\bP\in\cP_\bI} R_\bP$ is a generic $\hR^P$-class of $P$.
For $\theta\in\iL$ define
\[
\Psi_\bI (\theta)=\bigset{ (\bP,\bP')\in \cP_{\bI}\times\cP_{\bI}}{ \theta\cap (R_{\bP}\times R_{\bP'})\neq\emptyset}.
\]

\begin{thm}
\label{thm:DeL}
The mapping
\[
[\De_P,\lambda]\rightarrow \prod_{\bI\preceq \ldb r\rdb} \Eq(\cP_\bI):
\theta\mapsto (\Psi_\bI(\theta))
\]
is a subdirect embedding of lattices, with image
\[
\Bigset{ (\psi_\bI) \in\prod_{\bI\preceq \ldb r\rdb} \Eq(\cP_\bI)}{ \psi_\bI\restr_\bJ\subseteq \psi_\bJ
\text{ for all } \bJ\preceq \bI\preceq \ldb r\rdb}.
\]
\end{thm}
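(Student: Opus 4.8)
The plan is to prove Theorem~\ref{thm:DeL} by transcribing, almost line for line, the proof of Theorem~\ref{thm:DeR}, replacing cross-sections $C\in\cC_I$ throughout by partitions $\bP\in\cP_\bI$, interchanging ``left'' and ``right'' ($\L^P$ with $\R^P$, $\hL^P$ with $\hR^P$, $L_C$ with $R_\bP$), and replacing the lattice of non-empty subsets of $[r]$ by the partition lattice $\set{\bI}{\bI\preceq\ldb r\rdb}$. As there, the first assertion will follow from Proposition~\ref{pr:sdp} once we show that each $\Psi_\bI$ is a well-defined surjective lattice morphism $\iL\to\Eq(\cP_\bI)$ with $\bigcap_{\bI\preceq\ldb r\rdb}\ker(\Psi_\bI)=\De_\iL$, and the description of the image will follow from the analogues of Lemmas~\ref{la:Psidown} and~\ref{la:reconstruct}. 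The one genuinely new ingredient is the left--right dual of Lemma~\ref{lem:r}: for $\bP,\bP'\in\cP_\bI$ and every $f\in R_\bP$ there is a unique $f'\in R_{\bP'}$ with $(f,f')\in\lambda$, and for any $\th\in\iL$ one has $\th\cap(R_\bP\times R_{\bP'})\neq\es$ if and only if $(f,f')\in\th$ for all $f\in R_\bP$. This is proved exactly as Lemma~\ref{lem:r}: the set $L_f^P\cap R_{\bP'}$ is an $\H^P$-class lying in a single $\hH^P$-class, on which $\phi$ restricts to a bijection (Lemma~\ref{lem:DE}\ref{DE1}), so that $\kappa=\ker(\phi)$ and $\lambda=\kappa\cap\L^P$ pin down $f'$; and the passage of one related pair across a whole $\R^P$-class uses Green's Lemma (Lemma~\ref{lem:GL} and its dual) applied to the $\L^P$-related pair at hand.

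Granted this, the routine part of the argument transcribes essentially verbatim, using only the dual of Lemma~\ref{lem:r} together with the inclusion $\lambda\subseteq\kappa\subseteq\hH^P$, which forces every intermediate term of a $\theta_1\cup\theta_2$-path to lie in one $\hR^P$-class, hence in some $R_\bP$ with $\bP\in\cP_\bI$. This gives well-definedness of each $\Psi_\bI$ (dual of Lemma~\ref{la:Psiwell}), the fact that each $\Psi_\bI$ is a lattice morphism (dual of Lemma~\ref{la:Psihom}), and the identity
\[
\theta=\bigcup_{\bI\preceq\ldb r\rdb}\;\bigcup_{(\bP,\bP')\in\Psi_\bI(\theta)}\lambda\cap(R_\bP\times R_{\bP'})\qquad\text{for }\theta\in\iL,
\]
whence $\bigcap_{\bI\preceq\ldb r\rdb}\ker(\Psi_\bI)=\De_\iL$ (duals of Lemma~\ref{la:thetabigcup} and Corollary~\ref{la:interkerPsi}).

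The place where one must genuinely do something new is the verification of the $\restr$-inclusions --- the analogue of Lemma~\ref{la:Psidown}, and the non-trivial half of the compatibility check inside the reconstruction lemma --- since the restriction operation on partitions is less transparent than that on cross-sections. For $\bJ\preceq\bI$, the map $\bP\mapsto\bP\restr_\bJ$ amalgamates the blocks $P_I$ of $\bP$ with $I\subseteq J$ into the single block $Q_J$; in semigroup terms this corresponds to \emph{coarsening a kernel}, dual to deleting coordinates of a cross-section. Hence, where the proof of Lemma~\ref{la:Psidown} trimmed an image by left-multiplying by an element of $P$ with a suitably prescribed image, here, given $(f,g)\in\theta\cap(R_\bP\times R_{\bP'})$ and $\bJ\preceq\bI$, one should \emph{right}-multiply by a $b\in P$ chosen so that $b$ folds the points of $\im(fa)$ according to the merging of kernel classes prescribed by $\bJ$; then $X/\ker(f\star b)=\bP\restr_\bJ$ and $X/\ker(g\star b)=\bP'\restr_\bJ$ --- here, as in the ``$\pi=\id$'' step of Lemma~\ref{la:Psidown}, the equality $\ol f=\ol g$ is what makes the two restrictions correspond --- so that $(\bP\restr_\bJ,\bP'\restr_\bJ)\in\Psi_\bJ(\theta)$; some care is needed to realise the required folding by an element of $P$. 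The same right-multiplication computation handles the delicate half of compatibility inside the reconstruction lemma, namely $(f\star b,g\star b)\in\theta_\bJ\subseteq\theta$ for the partition $\bJ$ with $X/\ker(f\star b)=\bP\restr_\bJ$, using that $\ker(f\star b)$ is automatically a coarsening of $\ker(f)$ and hence of the form $\bP\restr_\bJ$; the other half $(b\star f,b\star g)\in\theta$ is immediate, since $b\star f=b\star g$ exactly as in the proof of Lemma~\ref{la:lamcon}, via an idempotent of $L_f^P=L_g^P$. Finally, surjectivity of each $\Psi_\bI$ follows from the reconstruction lemma just as in Lemma~\ref{la:Psionto}: given $\psi\in\Eq(\cP_\bI)$, put $\psi_\bI=\psi$, $\psi_\bJ=\nab_{\cP_\bJ}$ for $\bJ\prec\bI$, and $\psi_\bJ=\De_{\cP_\bJ}$ otherwise, and the coherence conditions of the reconstruction lemma are then trivially met. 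Assembling all of this yields the claimed subdirect embedding with the stated image, which is Theorem~\ref{thm:DeL}.
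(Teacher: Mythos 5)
Your proposal is correct and follows exactly the route the paper takes: the paper's own proof of Theorem~\ref{thm:DeL} is precisely the two-sentence observation that Lemmas~\ref{lem:r}--\ref{la:Psionto} dualise, with the details omitted. You have in fact supplied more than the paper does, and the details you single out as genuinely new --- the dual of Lemma~\ref{lem:r} via the bijectivity of $\phi$ on $\H^P$-classes, the use of $\ol f=\ol g$ together with injectivity of $a$ on $\im(f)$ to match up the two restricted kernels, and the construction of a folding element $b\in P$ (e.g.\ one constant on the classes $A_k$ with image inside $\im(a)$) for the analogues of Lemmas~\ref{la:Psidown} and~\ref{la:reconstruct} --- are exactly the right points and are handled correctly.
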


\begin{proof}[\bf Sketch of proof.]
The proof follows exactly the same pattern as that of Theorem \ref{thm:DeR}.
Each of the results \ref{lem:r}--\ref{la:Psionto} has a straightforward left-right translation, and the proofs are also
relatively easy modifications; we omit the details. Put together, they prove the theorem, using Proposition \ref{pr:sdp}.
\end{proof}

\begin{rem}\label{rem:lamrho}
As in Remark \ref{rem:Psi}, one can use Theorems \ref{thm:DeR} and \ref{thm:DeL} to draw lattice diagrams for the intervals $[\De_P,\rho]$ and $[\De_P,\lam]$, by identifying these with their images under the embeddings from the relevant theorem.  We have done this in the special case that $X=\{1,2,3,4\}$ and ${a=\usebox{\transa}\in\T_X=\T_4}$, by calculating the $\cC_I$ and $\cP_\bI$ sets, and the appropriate systems of equivalences.  The (elementary) details of the calculation are omitted, but the resulting diagrams are shown in Figure \ref{fig:lattice1}.  Of course one could then construct a diagram for $[\De_P,\ka] \cong [\De_P,\lam]\times[\De_P,\rho]$.  We omit this step, as~$[\De_P,\ka]$ can be seen in Figure \ref{fig:lattice0} as the interval bounded by the solid red and blue vertices.  Examining the figures, one can check that this interval has size $90$, while $[\De_P,\rho]$ and $[\De_P,\lam]$ have sizes $6$ and $15$, respectively.  

Figure \ref{fig:lattice1} also shows the congruences $\lam_q=\lam\cap R_q^P$ and $\rho_q=\rho\cap R_q^P$, for $q=0,1,2,3$.  These can be used to construct the sub-intervals $[\ka_q,\ka] \cong [\lam_q,\lam]\times[\rho_q,\rho]$, and hence the `layers' $\Lam_\xi$ of $\Cong(P)$, discussed in Remark~\ref{rem:Psi}; cf.~Figures \ref{fig:lattice0} and \ref{fig:lattice}.
\end{rem}

\begin{figure}[t]
\begin{center}
\scalebox{0.7}{
\begin{tikzpicture}
\foreach \x in {0,1,3,4} {\draw[-{latex}] (-2.6,\x)--(-.25,\x);}
\foreach \x/\y in {
4/\rho=\rho_3,
3/\rho_2,
1/\rho_1,
0/\De_P=\rho_0
}
{\node[left] () at (-2.5,\x) {$\y$};}
\fill (0,0)circle(.1);
\fill (0,1)circle(.1);
\fill (-1,2)circle(.1);
\fill (1,2)circle(.1);
\fill (0,3)circle(.1);
\fill (0,4)circle(.1);
\draw
(0,0)--(0,1)--(-1,2)--(0,3)--(0,4) 
(0,1)--(1,2)--(0,3)
;
\begin{scope}[shift={(10,0)}]
\foreach \x in {0,5} {\draw[-{latex}] (-2.6,\x)--(-.25,\x);}
\draw[-{latex}] (-2.6,2.07)--(-.25,2.93);
\foreach \x/\y in {
5/\lam=\lam_3,
2/\lam_2,
0/\De_P=\lam_0=\lam_1
}
{\node[left] () at (-2.5,\x) {$\y$};}
\fill (0,0)circle(.1);
\fill (-1,1)circle(.1);
\fill (0,1)circle(.1);
\fill (1,1)circle(.1);
\fill (-1,2)circle(.1);
\fill (0,2)circle(.1);
\fill (1,2)circle(.1);
\fill (-2,3)circle(.1);
\fill (0,3)circle(.1);
\fill (1,3)circle(.1);
\fill (2,3)circle(.1);
\fill (-1,4)circle(.1);
\fill (0,4)circle(.1);
\fill (1,4)circle(.1);
\fill (0,5)circle(.1);
\draw
(0,0)--(-1,1)--(-1,2)--(0,1)--(0,0)--(1,1)--(1,2)--(0,1)
(-1,1)--(0,2)--(1,1)
(-1,2)--(-2,3)--(-1,4)--(0,3)--(-1,2)
(1,2)--(2,3)--(1,4)--(0,3)--(1,2)
(0,5)--(0,2)--(1,3)--(0,4)
(-1,4)--(0,5)--(1,4)
;
\end{scope}
\end{tikzpicture}
}
\caption{Left and right: the intervals $[\De_P,\rho]$ and $[\De_P,\lam]$ in the congruence lattice of $P=\Reg(\T_4^a)$, where $a=\usebox{\transa}$; cf.~Figure \ref{fig:lattice0}.}
\label{fig:lattice1}
\end{center}
\end{figure}

\section{Classification of congruences}\label{sect:class}

Our main result, Theorem \ref{thm:main}, describes the structure of the congruence lattice of $P=\Cong(\T_X^a)$, by successively decomposing the lattice via (sub)direct products.  The various technical results proved along the way allow us to give a transparent classification of the congruences themselves.  In order to make the classification succinct, we introduce some further notation.

A \emph{$\cC$-system} is a tuple $\Psi = (\psi_I) \in \prod_{\es\not= I\sub[r]}\Eq(\cC_I)$ satisfying $\psi_I\restr_J\sub\psi_J$ for all $\es\not=J\sub I\sub[r]$.  Given such a $\cC$-system $\Psi$, Lemma \ref{la:reconstruct} tells us that the relation
\[
\rho(\Psi) = \bigcup_{\es\not=I\sub[r]} \rho(\psi_I), \WHERE \rho(\psi_I) = \bigcup_{(C,C')\in\psi_I} \rho\cap (L_{C}\times L_{C'}),
\]
is a congruence of $P$.
We also define the parameter $\rank(\Psi) = \max\set{q}{\psi_I=\nab_{\cC_I} \text{ whenever $|I|\leq q$}}$.

A \emph{$\cP$-system} is a tuple $\Psi = (\psi_\bI) \in \prod_{\bI\preceq\ldb r\rdb}\Eq(\cP_\bI)$ satisfying $\psi_\bI\restr_\bJ\sub\psi_\bJ$ for all $\bJ\preceq\bI\preceq\ldb r\rdb$.  Dually, we have the associated congruence
\[
\lam(\Psi) = \bigcup_{\bI\preceq\ldb r\rdb} \lam(\psi_\bI), \WHERE \lam(\psi_\bI) = \bigcup_{(\bP,\bP')\in\psi_\bI} \lam\cap (R_{\bP}\times R_{\bP'}),
\]
and parameter $\rank(\Psi) = \max\set{q}{\psi_\bI=\nab_{\cP_\bI} \text{ whenever $|\bI|\leq q$}}$.

\begin{thm}\label{thm:class}
Let $X$ be a finite set, and let $a\in\T_X$ be an idempotent of rank $r$.  
Then every non-universal congruence $\si$ of $P=\Reg(\T_X^a)$ has a unique decomposition as
\[
\si = R_N^P \vee \lam(\Psi_1) \vee \rho(\Psi_2) = R_N^P \circ \lam(\Psi_1) \circ \rho(\Psi_2),
\]
where
\bit
\item $N$ is a normal subgroup of $\S_q$ for some $1\leq q\leq r$,
\item $\Psi_1$ is a $\cP$-system, and $\Psi_2$ a $\cC$-system, both of rank at least $q-1$.
\eit
\end{thm}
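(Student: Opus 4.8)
The plan is to assemble the four structural decompositions already established---Theorems \ref{thm:Psi}, \ref{th:dk}, \ref{thm:DeR} and \ref{thm:DeL}, with their accompanying reconstruction lemmas---and to keep track of how the rank parameters propagate. I would assume $r\ge2$ throughout; the case $r=1$ is degenerate ($P$ is a right-zero semigroup, $R_N^P=\lam(\Psi_1)=\De_P$ are forced, and $\si=\rho(\Psi_2)$ runs over all of $\Eq(P)$) and is checked by hand. For \emph{existence}, fix a non-universal $\si\in\Cong(P)$ and set $\xi=\Xi(\si)=\si\restr_T$ and $\th=\Th(\si)=\si\cap\ka$. If $\xi$ were $\nab_T$, Proposition \ref{prop:join} and Lemma \ref{lem:RN} would force $\si=\nab_T^\sharp\vee\th=\nab_P$; so $\xi\ne\nab_T$, and transporting Mal'cev's Theorem \ref{thm:Malcev} along $T\cong\T_r$ gives a unique pair $1\le q\le r$, $N\normal\S_q$ (necessarily $(q,N)\ne(r,\S_r)$) with $\xi=R_N^T$. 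By Lemma \ref{lem:RN}, $\xi^\sharp=R_N^P$, so Proposition \ref{prop:join} with Remark \ref{rem:joincomp} gives $\si=R_N^P\vee\th=R_N^P\circ\th$. Next, Theorem \ref{th:dk}, Lemma \ref{la:capvee} and Remark \ref{rem:joincomp2} give $\th=(\th\cap\lam)\vee(\th\cap\rho)=(\th\cap\lam)\circ(\th\cap\rho)$ with $\th\cap\lam\in\iL$ and $\th\cap\rho\in\iR$; and Theorem \ref{thm:DeL} with the left--right dual of Lemma \ref{la:reconstruct}, and Theorem \ref{thm:DeR} with Lemma \ref{la:reconstruct}, identify $\th\cap\lam=\lam(\Psi_1)$ and $\th\cap\rho=\rho(\Psi_2)$, where $\Psi_1=(\Psi_\bI(\th\cap\lam))_\bI$ is a $\cP$-system and $\Psi_2=(\Psi_I(\th\cap\rho))_I$ a $\cC$-system (that these tuples are $\cP$- and $\cC$-systems is exactly the description of the images in Theorems \ref{thm:DeL} and \ref{thm:DeR}). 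Substituting, and using associativity of relational composition,
\[
\si=R_N^P\vee\lam(\Psi_1)\vee\rho(\Psi_2)=R_N^P\circ\lam(\Psi_1)\circ\rho(\Psi_2).
\]

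The step I expect to be the main obstacle is the \emph{rank condition}, $\rank(\Psi_1),\rank(\Psi_2)\ge q-1$; this is pure bookkeeping with the rank parameters, but needs care. First I would check $\rk(R_N^T)=q-1$: the Rees congruence $R_{q-1}^T=R_{I_{q-1}^T}$ lies in $R_N^T$, whereas $R_q^T\not\sub R_N^T$, because $R_N^T$ meets $D_q^T$ inside $\H^T$ while $D_q^T$ has more than one $\H^T$-class --- this is where $r\ge2$ and $(q,N)\ne(r,\S_r)$ are used, the excluded value being precisely the one for which $D_r^T$ is a single $\H^T$-class, so that $R_{\S_r}^T=\nab_T$. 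Then Lemma \ref{lem:etaxizeth} gives $\rk(\th)\ge\rk(\xi)=q-1$, i.e.\ $\ka_{q-1}=\ka\cap R_{q-1}^P\sub\th$; intersecting with $\lam$ and $\rho$ and using $\lam,\rho\sub\ka$ gives $\lam_{q-1}\sub\th\cap\lam=\lam(\Psi_1)$ and $\rho_{q-1}\sub\th\cap\rho=\rho(\Psi_2)$. Writing $\Psi_1=(\psi_\bI)$, to get $\rank(\Psi_1)\ge q-1$ I must show $\psi_\bI=\nab_{\cP_\bI}$ whenever $|\bI|\le q-1$: for such $\bI$ and $\bP,\bP'\in\cP_\bI$, pick $f\in R_\bP$, so $\rank(f)=|\bI|\le q-1$; the unique partner $f'\in R_{\bP'}$ with $(f,f')\in\lam$ (left--right dual of Lemma \ref{lem:r}) also has rank $\le q-1$, so $(f,f')\in\lam\cap R_{q-1}^P=\lam_{q-1}\sub\lam(\Psi_1)=\th\cap\lam$, whence $(\bP,\bP')\in\Psi_\bI(\th\cap\lam)=\psi_\bI$. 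As $\bP,\bP'$ were arbitrary, $\psi_\bI=\nab_{\cP_\bI}$. The same argument with cross-sections in place of partitions gives $\rank(\Psi_2)\ge q-1$.

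For \emph{uniqueness}, suppose $\si=R_N^P\vee\lam(\Psi_1)\vee\rho(\Psi_2)$ is any decomposition of the stated form. Applying the lattice morphism $\Xi$ (Lemma \ref{lem:sur}), and using $\lam(\Psi_1)\restr_T,\rho(\Psi_2)\restr_T\sub\ka\restr_T=\De_T$ together with $R_N^P\restr_T=R_N^T$, gives $\si\restr_T=R_N^T$; so $(q,N)$ is determined by $\si$ via uniqueness in Mal'cev's classification. Put $\th'=\lam(\Psi_1)\vee\rho(\Psi_2)\in\iK$, so $\si=\xi^\sharp\vee\th'$ with $\xi=R_N^T$. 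The rank hypothesis gives $\lam_{q-1}\sub\lam(\Psi_1)$ and $\rho_{q-1}\sub\rho(\Psi_2)$, hence $\th'\supseteq\lam_{q-1}\vee\rho_{q-1}=\ka_{q-1}$ (the equality from Lemma \ref{la:capvee} applied to $\ka_{q-1}$), so $\rk(\th')\ge q-1=\rk(\xi)$, and Lemma \ref{lem:th} yields $\th'=\si\cap\ka$. Now $\Phi_\lam,\Phi_\rho$ are lattice morphisms (Lemma \ref{la:philsur}) and $\lam\cap\rho=\ka\cap\H^P=\De_P$ (since $\phi$ is injective on each $\H^P$-class, Lemma \ref{lem:DE}\ref{DE1}), so $\Phi_\lam(\th')=\lam(\Psi_1)$ and $\Phi_\rho(\th')=\rho(\Psi_2)$; that is, $\lam(\Psi_1)=(\si\cap\ka)\cap\lam$ and $\rho(\Psi_2)=(\si\cap\ka)\cap\rho$, so both are determined by $\si$. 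Finally, the embeddings of Theorems \ref{thm:DeL} and \ref{thm:DeR} are injective and carry $\lam(\Psi_1)$ and $\rho(\Psi_2)$ back to $\Psi_1$ and $\Psi_2$, so the two systems are determined as well. Apart from the rank bookkeeping, the whole argument is a matter of chaining together the already-proven decompositions and their reconstruction lemmas in the right order.
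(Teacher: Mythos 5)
Your proposal is correct and follows essentially the same route as the paper: decompose $\si$ via Theorem \ref{thm:Psi} (Proposition \ref{prop:join} and Lemmas \ref{lem:xi}--\ref{lem:th}), then via Theorem \ref{th:dk} (Lemma \ref{la:capvee}), then identify the two factors with $\lam(\Psi_1)$ and $\rho(\Psi_2)$ via the reconstruction lemmas, with the circle-composition form coming from Remarks \ref{rem:joincomp} and \ref{rem:joincomp2}. If anything, you supply more detail than the paper on the rank bookkeeping (the paper simply asserts $q-1\leq\rank(\th)=\min(\rank(\Psi_1),\rank(\Psi_2))$), and your explicit verification that the stated rank hypotheses suffice to invoke Lemma \ref{lem:th} in the uniqueness direction is a welcome elaboration rather than a deviation.
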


\pf
The $r=1$ case being trivial, we assume that $r\geq2$.
By Proposition \ref{prop:join} and Lemmas~\mbox{\ref{lem:etaxizeth}--\ref{lem:th}}, we have $\si=\xi^\sharp\vee\th$ for unique $\xi\in\Cong(T)$ and $\th\in[\De_P,\ka]$ with $\rank(\xi)\leq\rank(\th)$.  Since~$\si$ is non-universal, we have $\xi=R_N^T$ for some $1\leq q\leq r$ and $N\normal\S_q$, and then $\rank(\xi)=q-1$ and $\xi^\sharp=R_N^P$ by Lemma \ref{lem:RN}.  
By Lemma \ref{la:capvee} and Corollary~\ref{la:capdel} we have $\th=\th_1\vee\th_2$ for unique $\th_1\in[\De_P,\lam]$ and $\th_2\in[\De_P,\rho]$.  
By Lemmas \ref{la:thetabigcup} and \ref{la:reconstruct} we have $\th_2=\rho(\Psi_2)$ for a unique $\cC$-system $\Psi_2$, namely $\Psi_2=(\Psi_I(\th_2))_{\es\not= I\sub[r]}$.  Dually, we have $\th_1=\lam(\Psi_1)$ for a unique $\cP$-system $\Psi_1$.
We then have $q-1\leq\rank(\th)=\min(\rank(\Psi_1),\rank(\Psi_2))$.
Finally, we note that $\si = \xi^\sharp\vee\th_1\vee\th_2 = \xi^\sharp\circ\th_1\circ\th_2$ because of Remarks \ref{rem:joincomp} and \ref{rem:joincomp2}.
\epf

\section{\boldmath Application: the height of the lattice $\Cong(P)$}\label{sect:height}

The \emph{height} of a finite lattice $L$, denoted $\Ht(L)$, is the maximum size of a chain in $L$.  
Heights of  lattices of subgroups, subsemigroups and semigroup congruences have been treated in \cite{CST1989}, \cite{CGMP2017} and \cite{BEMMR2023}, respectively.
Results of \cite{BEMMR2023} include exact values for the heights of congruence lattices of semigroups satisfying the separation properties discussed in the introduction, and these do not hold for $P=\Reg(\T_X^a)$.
Nevertheless, we can compute the height of $\Cong(P)$ by using our (sub)direct decompositions from Theorem~\ref{thm:main}.

\begin{thm}\label{thm:height}
Let $X$ be a finite set of size $n$, and let $a=\trans{A_1&\cdots&A_r\\a_1&\cdots&a_r}\in\T_X$ be a mapping of rank~$r$.
Then the congruence lattice of $P=\Reg(\T_X^a)$ has height
\[
\Ht(\Cong(P)) = 3r + \prod_{q=1}^r (|A_q|+1) + \sum_{q=1}^r S(r,q)q^{n-r} - 2^r - B(r) - \begin{cases}
2 &\text{if $1\leq r\leq 3$,}\\
1 &\text{if $r\geq4$.}
\end{cases}
\]
\end{thm}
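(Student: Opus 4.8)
The plan is to extract $\Ht(\Cong(P))$ from the four-stage (sub)direct decomposition of Theorem~\ref{thm:main}. As the formula depends only on $n$, $r$ and the multiset of block sizes of $\ker(a)$ — all unchanged under the isomorphism \eqref{eq:TXap} — we may assume $a$ is idempotent, and we treat $r\ge2$ here, postponing $r=1$ to the end. The engine is the elementary fact that, for a subdirect embedding of a finite lattice $L$ into $L_1\times L_2$ via surmorphisms $\al:L\to L_1$ and $\be:L\to L_2$ with $\ker(\al)\cap\ker(\be)=\De_L$, every chain $\mathcal C\sub L$ maps, under $\si\mapsto(\al(\si),\be(\si))$, injectively and order-preservingly to a chain in the product $\al(\mathcal C)\times\be(\mathcal C)$; since $\al(\mathcal C)$ and $\be(\mathcal C)$ are chains in $L_1$ and $L_2$, and a product of an $m$-chain and a $k$-chain has height $m+k-1$, this yields $|\mathcal C|\le|\al(\mathcal C)|+|\be(\mathcal C)|-1\le\Ht(L_1)+\Ht(L_2)-1$, hence $\Ht(L)\le\Ht(L_1)+\Ht(L_2)-1$, with equality for genuine direct products (Proposition~\ref{pr:dp}) because the two obvious chains then concatenate. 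Applying this to Theorem~\ref{thm:Psi} — noting $\Cong(\T_r)$ is a chain, so $\Ht(\Cong(\T_r))=|\Cong(\T_r)|$ — gives $\Ht(\Cong(P))\le\Ht([\De_P,\ka])+|\Cong(\T_r)|-1$, and the reverse inequality is witnessed by concatenating a longest chain of $[\De_P,\ka]$, realised inside the bottom layer $\{\De_T\}\times[\De_P,\ka]$ of the image, with the ascent through the top elements $(\xi,\ka)$, $\xi\in\Cong(\T_r)$, of all the layers (cf.\ Remark~\ref{rem:Psi}). Since Theorem~\ref{th:dk} is a genuine direct decomposition, it likewise gives $\Ht([\De_P,\ka])=\Ht([\De_P,\lam])+\Ht([\De_P,\rho])-1$.

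It remains to compute $\Ht([\De_P,\rho])$, $\Ht([\De_P,\lam])$ and $|\Cong(\T_r)|$. For $[\De_P,\rho]$ I use Theorem~\ref{thm:DeR}, which identifies it with the lattice of $\cC$-systems inside $\prod_{\es\neq I\sub[r]}\Eq(\cC_I)$; its bottom element corresponds to $(\De_{\cC_I})_I$ and its top to $(\nab_{\cC_I})_I$, the latter because $\Psi_I(\rho)=\nab_{\cC_I}$ by Lemma~\ref{lem:r}. For an upper bound on the length of a chain I use the potential $\Phi\big((\psi_I)\big)=\sum_{\es\neq I\sub[r]}|\cC_I/\psi_I|$: along any covering in the lattice of $\cC$-systems, injectivity of the embedding forces $\psi_I$ to grow strictly for some $I$ while no $\psi_J$ shrinks, so $\Phi$ drops by at least $1$; thus a chain has at most $\Phi(\text{bottom})-\Phi(\text{top})+1=\sum_I|\cC_I|-(2^r-1)+1$ elements. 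A matching chain is built by filling in the coordinates $\psi_I$ in non-decreasing order of $|I|$: once every $\psi_J$ with $|J|<|I|$ equals $\nab_{\cC_J}$, the constraints $\psi_I\restr_J\sub\psi_J$ are automatic, so $\psi_I$ can be driven from $\De_{\cC_I}$ up to $\nab_{\cC_I}$ through a maximal chain of $\Eq(\cC_I)$, contributing $|\cC_I|-1$ covering steps. As $|\cC_I|=\prod_{i\in I}|A_i|$ and $\sum_{I\sub[r]}\prod_{i\in I}|A_i|=\prod_{q=1}^r(|A_q|+1)$,
\[
\Ht([\De_P,\rho])=\sum_{\es\neq I\sub[r]}\big(|\cC_I|-1\big)+1=\prod_{q=1}^r(|A_q|+1)-2^r+1.
\]

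The interval $[\De_P,\lam]$ is treated identically via Theorem~\ref{thm:DeL}, with the sets $\cP_\bI$ ($\bI$ ranging over all partitions of $[r]$) replacing the $\cC_I$: here $|\cP_\bI|=|\bI|^{n-r}$, since the intersections with $\im(a)$ are forced and the $n-r$ points of $X\sm\im(a)$ may be distributed freely among the $|\bI|$ blocks. Using that the partitions of $[r]$ with $q$ blocks number $S(r,q)$ (Stirling number of the second kind) and that the total number of partitions of $[r]$ is $B(r)$ (Bell number), the same argument yields
\[
\Ht([\De_P,\lam])=\sum_{\bI}\big(|\cP_\bI|-1\big)+1=\sum_{q=1}^r S(r,q)\,q^{n-r}-B(r)+1.
\]
Finally, $|\Cong(\T_r)|$ comes from Mal'cev's Theorem~\ref{thm:Malcev} together with the normal-subgroup lattices of symmetric groups: $\S_1$ has $1$ normal subgroup, $\S_2$ has $2$, $\S_4$ has $4$ (namely $\{\id\}$, the Klein four-group, $\A_4$ and $\S_4$), and every other $\S_q$ has $3$; counting these and $\nab_T$ gives $|\Cong(\T_r)|=3r-2$ for $1\le r\le3$ and $3r-1$ for $r\ge4$. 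Substituting the three quantities into
\[
\Ht(\Cong(P))=\Ht([\De_P,\lam])+\Ht([\De_P,\rho])+|\Cong(\T_r)|-2
\]
and simplifying produces the claimed formula. Lastly, when $r=1$ we use Remark~\ref{rem:1}: there $\Cong(P)=\Eq(P)$ with $|P|=n$, so $\Ht(\Cong(P))=n$, and one checks that the closed form also evaluates to $n$ in that case.

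I expect the main obstacle to be the combinatorial bookkeeping in the third and fourth stages: checking that the chains built by saturating smaller coordinates first genuinely respect the restriction constraints at every step (so that they really are chains in $[\De_P,\rho]$ and $[\De_P,\lam]$), and keeping the $-1$ and $-2$ corrections straight when the heights of the successive (sub)direct factors are assembled — a single miscount there shifts the additive constant in one range of $r$ and breaks the agreement with the worked example.
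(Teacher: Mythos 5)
Your proposal is correct and follows essentially the same route as the paper: the same staged use of Theorem~\ref{thm:main} with height-additivity across the (sub)direct factors, the same explicit chains (ascending through the layers for part \ref{it:main1}, and saturating the coordinates $\psi_I$, resp.\ $\psi_\bI$, in an order extending the constraint poset for parts \ref{it:main3} and \ref{it:main4}), and the same counts $|\cC_I|=\prod_{i\in I}|A_i|$, $|\cP_\bI|=|\bI|^{n-r}$ and $\Ht(\Cong(\T_r))$. The only cosmetic difference is your potential-function bound $\sum_I|\cC_I/\psi_I|$ for the upper estimate on chains of systems, where the paper simply bounds the height of the sublattice by that of the ambient product $\prod_I\Eq(\cC_I)$; the two arguments are equivalent.
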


In this result, $B(r)$ stands for the Bell number and $S(r,q)$ the Stirling number of the second kind.
Looking at the formula,
we are unaware of a closed expression for the numbers $\sum_{q=1}^r S(r,q)q^{n-r}$, but we note that they appear as Sequence A108458 on the OEIS \cite{OEIS}.

We prove the theorem via a series of lemmas, in which we will repeatedly  make use of the well-known fact that
\begin{equation}\label{eq:htprod}
\Ht(L_1\times L_2) = \Ht(L_1) + \Ht(L_2) - 1 \qquad\text{for finite lattices $L_1$ and $L_2$.}
\end{equation}

\begin{lemma}\label{lem:Ht1}
We have $\Ht(\Cong(P)) = \Ht(\Cong(\T_r)) + \Ht[\De_P,\ka] - 1$.
\end{lemma}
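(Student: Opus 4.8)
The plan is to transport the computation through the subdirect embedding of Theorem~\ref{thm:Psi}. That result identifies $\Cong(P)$ with the sublattice
\[
\Lambda = \bigset{(\xi,\theta)\in\Cong(T)\times[\De_P,\ka]}{\rk(\xi)\leq\rk(\theta)}
\]
of the direct product $\Cong(T)\times[\De_P,\ka]$, where $T\cong\T_r$; so it is enough to show that $\Ht(\Lambda)=\Ht(\Cong(\T_r))+\Ht[\De_P,\ka]-1$. For the upper bound this is immediate: $\Lambda$ is a sublattice of $\Cong(T)\times[\De_P,\ka]$, so every chain in $\Lambda$ is a chain in the full product, whence $\Ht(\Lambda)\leq\Ht(\Cong(T)\times[\De_P,\ka])$, and \eqref{eq:htprod} gives the claimed bound.

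For the lower bound I would exhibit a chain of the required length by ``turning the corner'' inside $\Lambda$. The two facts needed are $\rk(\De_T)=0$ and $\rk(\ka)=r$ (the latter because $\ka\cap R_r^P=\ka\cap\nab_P=\ka$). The first shows that $(\De_T,\theta)\in\Lambda$ for every $\theta\in[\De_P,\ka]$, and the second that $(\xi,\ka)\in\Lambda$ for every $\xi\in\Cong(T)$. Now take a maximal chain $\De_P=\theta_0<\dots<\theta_k=\ka$ in $[\De_P,\ka]$, so that $k+1=\Ht[\De_P,\ka]$, and a maximal chain $\De_T=\xi_0<\dots<\xi_l=\nab_T$ in the chain $\Cong(T)$, so that $l+1=\Ht(\Cong(\T_r))$. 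Then
\[
(\De_T,\theta_0)<\dots<(\De_T,\theta_k)=(\xi_0,\ka)<(\xi_1,\ka)<\dots<(\xi_l,\ka)
\]
is a chain in $\Lambda$ of size $(k+1)+l=\Ht[\De_P,\ka]+\Ht(\Cong(\T_r))-1$, which yields the reverse inequality and completes the proof.

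I do not expect any genuine obstacle here: the argument rests only on the two trivial rank computations above together with the fact, due to Mal'cev (Theorem~\ref{thm:Malcev}), that $\Cong(T)$ is a chain. Conceptually, this just formalises the ``stacked layers'' picture of Remark~\ref{rem:Psi}: the longest chain in $\Cong(P)$ runs all the way up the bottom layer $\Lambda_{\De_T}\cong[\De_P,\ka]$ and then continues up the spine formed by the top elements $(\xi,\ka)$ of the successive layers.
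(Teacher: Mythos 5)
Your proof is correct and follows essentially the same route as the paper's: the upper bound comes from the height of the ambient direct product via \eqref{eq:htprod}, and the lower bound from the same ``corner'' chain running up $\{\De_T\}\times[\De_P,\ka]$ and then along $\Cong(T)\times\{\ka\}$ (you in fact justify membership of this chain in $\Lambda$ more explicitly than the paper, via the rank computations $\rk(\De_T)=0$ and $\rk(\ka)=r$). The only addition in the paper is a one-line disposal of the $r=1$ case, where Theorem~\ref{thm:Psi} does not apply but the identity holds trivially because $\ka=\nab_P$ and $\Cong(\T_1)$ is a singleton.
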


\pf
This is trivial for $r=1$ (cf.~Remark \ref{rem:1}).  For $r\geq2$, we identify $\Cong(P)$ with its image~$\Lam$ under the embedding into  $\Cong(\T_r)\times[\De_P,\ka]$ from Theorem \ref{thm:main}\ref{it:main1}.  It then immediately follows that
\[
\Ht(\Cong(P)) \leq \Ht(\Cong(\T_r)\times[\De_P,\ka]) = \Ht(\Cong(\T_r)) + \Ht[\De_P,\ka] - 1.
\]
It remains to give a chain in $\Lam$ of the claimed size.  For this, we fix chains
\[
\De_{\T_r} = \xi_1 \subset\xi_2 \subset\cdots\subset \xi_k = \nab_{\T_r} \AND \De_P = \th_1\subset\th_2\subset\cdots\subset\th_l=\ka
\]
in $\Cong(\T_r)$ and $[\De_P,\ka]$, respectively, of length $k=\Ht(\Cong(\T_r))$ and $l=\Ht[\De_P,\ka]$.  It is then easy to verify that
\[
(\De_{\T_r},\De_P) = (\xi_1,\th_1) < (\xi_1,\th_2) < \cdots < (\xi_1,\th_l) = (\xi_1,\ka) < (\xi_2,\ka) <\cdots< (\xi_k,\ka) = (\nab_{\T_r},\ka)
\]
is a chain in $\Lam$ of the required length $k+l-1$.
\epf

It follows from  Theorem~\ref{thm:Malcev} (and the well-known classification of normal subgroups of symmetric groups) that
\begin{equation}\label{eq:HtTr}
\Ht(\Cong(\T_r)) = \begin{cases}
3r-2 &\text{for $1\leq r\leq 3$,}\\
3r-1 &\text{for $r\geq4$.}
\end{cases}
\end{equation}
We therefore turn to the task of finding an expression for $\Ht[\De_P,\ka]$.  
By Theorem \ref{thm:main}, we  have
\begin{equation}\label{eq:HtDeka}
\Ht[\De_P,\ka] = \Ht([\De_P,\lam]\times[\De_P,\rho]) = \Ht[\De_P,\lam] + \Ht[\De_P,\rho] - 1.
\end{equation}

\begin{lemma}
\label{la:HtDl}
$\Ht[\Delta_P,\lambda]=1-B(r)+\sum_{q=1}^rS(r,q)q^{n-r}$.
\end{lemma}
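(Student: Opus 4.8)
The strategy is to pass, via the subdirect embedding of Theorem~\ref{thm:DeL}, to a purely combinatorial computation. Identify $\iL=[\De_P,\lam]$ with its image $L$, the sublattice of $\prod_{\bI\preceq\ldb r\rdb}\Eq(\cP_\bI)$ consisting of all $\cP$-systems $(\psi_\bI)$, i.e.\ those tuples satisfying $\psi_\bI\restr_\bJ\sub\psi_\bJ$ for all $\bJ\preceq\bI$. Being a sublattice of the product, $L$ has componentwise meet and join, hence its order is the componentwise one, and its bottom and top are the all-$\De$ and all-$\nab$ tuples. The first step is the bookkeeping: for a partition $\bI\preceq\ldb r\rdb$ with $k=|\bI|$ blocks, each of the $r$ points of $\im(a)$ is forced into a prescribed block of any member of $\cP_\bI$, while each of the remaining $n-r$ points may go into any of the $k$ blocks, so $|\cP_\bI|=k^{n-r}$. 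Since there are $S(r,q)$ partitions of $[r]$ with $q$ blocks and $\sum_{q=1}^r S(r,q)=B(r)$, this gives
\[
M:=\sum_{\bI\preceq\ldb r\rdb}\big(|\cP_\bI|-1\big)=\sum_{q=1}^r S(r,q)\big(q^{n-r}-1\big)=\sum_{q=1}^r S(r,q)q^{n-r}-B(r),
\]
and the lemma reduces to the assertion $\Ht(L)=M+1$.

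For the upper bound, introduce on $L$ the total-rank function $R(\Psi)=\sum_\bI\operatorname{rk}(\psi_\bI)$, where $\operatorname{rk}$ denotes the rank in the partition lattice $\Eq(\cP_\bI)$, so that $\operatorname{rk}(\De_{\cP_\bI})=0$, $\operatorname{rk}(\nab_{\cP_\bI})=|\cP_\bI|-1$, and $\operatorname{rk}$ is strictly monotone. Along any chain $\Psi^{(0)}<\dots<\Psi^{(m)}$ in $L$, componentwise comparability gives $\psi^{(i)}_\bI\sub\psi^{(i+1)}_\bI$ for every $\bI$, and since $\Psi^{(i)}\neq\Psi^{(i+1)}$ at least one coordinate strictly grows; hence $R(\Psi^{(0)})<\dots<R(\Psi^{(m)})$. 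As $0\le R\le M$ on $L$, this forces $m\le M$, so every chain has at most $M+1$ elements.

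For the matching lower bound I would exhibit an explicit chain of size $M+1$. Fix a linear extension $\bI_1,\dots,\bI_{B(r)}$ of the refinement order on the partitions of $[r]$ in which finer partitions occur later (so $\bI_1=\{[r]\}$ and $\bI_{B(r)}=\ldb r\rdb$), and build the chain by filling the coordinates one at a time in this order: in stage $s$, keep $\psi_{\bI_t}=\nab_{\cP_{\bI_t}}$ for $t<s$ and $\psi_{\bI_t}=\De_{\cP_{\bI_t}}$ for $t>s$, and let $\psi_{\bI_s}$ run through a maximal chain from $\De_{\cP_{\bI_s}}$ to $\nab_{\cP_{\bI_s}}$ in $\Eq(\cP_{\bI_s})$. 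The point requiring care is that every tuple so produced is a genuine $\cP$-system: given a constraint $\psi_\bI\restr_\bJ\sub\psi_\bJ$ with $\bI$ strictly finer than $\bJ$, the partition $\bI$ occurs strictly after $\bJ$ in the linear extension, and a short case analysis on which coordinate is currently being filled shows that either $\psi_\bI=\De_{\cP_\bI}$ (and then $\psi_\bI\restr_\bJ\sub\De_{\cP_\bJ}\sub\psi_\bJ$, since $\bP\mapsto\bP\restr_\bJ$ sends $\cP_\bI$ into $\cP_\bJ$, hence a diagonal into a diagonal) or $\psi_\bJ=\nab_{\cP_\bJ}$ (and then the containment is automatic). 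Consecutive stages overlap in exactly one tuple, so the concatenation is a chain with $1+\sum_{s}(|\cP_{\bI_s}|-1)=M+1$ elements. Combined with the upper bound, $\Ht(\iL)=M+1=1-B(r)+\sum_{q=1}^r S(r,q)q^{n-r}$, as claimed.

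The only genuinely non-routine step is the choice of filling order in the lower bound: because the compatibility constraints all point from finer partitions to coarser ones, one must saturate the coarser coordinates first (the finest, $\ldb r\rdb$, last), after which the verification that the intermediate tuples lie in $L$ is straightforward; the naive ``finest-first'' order would immediately run into constraint violations. The counting of $|\cP_\bI|$, the total-rank argument, and the identity $\sum_q S(r,q)=B(r)$ are all elementary.
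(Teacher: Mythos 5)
Your proposal is correct and follows essentially the same route as the paper: reduce to the image of the subdirect embedding of Theorem~\ref{thm:DeL}, count $|\cP_\bI|=q^{n-r}$ for $|\bI|=q$, and realise the maximal chain by saturating the coordinates one at a time along a linear extension of $\preceq$ with the coarser partitions first (the paper gets the upper bound from $\Ht(L_1\times L_2)=\Ht(L_1)+\Ht(L_2)-1$ rather than your explicit total-rank function, but that is the same argument). Your extra remark correctly identifies the one point needing care, namely that the constraints $\psi_\bI\restr_\bJ\subseteq\psi_\bJ$ force the coarsest-first filling order.
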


\begin{proof}
First we claim that
\begin{equation}
\label{eq:Lsum}
\Ht[\Delta_P,\lambda]=\Ht \Big(\prod_{\bI\preceq\ldb r\rdb} \Eq(\cP_\bI)\Big).
\end{equation}
The inequality $\leq$ follows from Theorem \ref{thm:main}\ref{it:main4}.
To establish the reverse inequality, we need to exhibit a chain in $[\Delta_P,\lambda]$ of size $\Ht(\prod_{\bI} \Eq(\cP_\bI))$.  We first observe that \eqref{eq:htprod} gives
\begin{equation}\label{eq:Lsum2}
\Ht\Big(\prod_{\bI}\Eq(\cP_\bI)\Big)=\sum_{\bI} \Ht(\Eq(\cP_\bI))-b+1=\sum_{\bI} |\cP_\bI|-b+1,
\end{equation}
where $b$ is the number of partitions $I\preceq\ldb r\rdb$, i.e.~$b=B(r)$, the Bell number.
We now list all the partitions of $[r]$ as $\bI_1,\dots,\bI_b$ extending the refinement partial order~$\preceq$ (i.e.~$\bI_i\preceq\bI_j \implies i\leq j$).
Then, for each $i$, pick a chain in $\Eq(\cP_{\bI_i})$ of length $l_i=\Ht(\Eq(\cP_{\bI_i})) = |\cP_{\bI_i}|$:
\[
\Delta= \psi_{i,1}<\psi_{i,2}<\dots<\psi_{i,l_i}=\nabla.
\]
Here $\Delta$ and $\nabla$ stand for $\Delta_{\cP_{\bI_i}}$ and $\nabla_{\cP_{\bI_i}}$, respectively.
We can find a copy of this chain in the image of $[\Delta_P,\lambda]$ in $\prod_{\bI} \Eq(\cP_\bI)$, as in Theorem \ref{thm:main}\ref{it:main4}, in the following way, where we continue to omit subscripts from various $\De$s and $\nab$s:
\begin{align*}
(\underbrace{\nabla,\dots,\nabla}_{i-1},\Delta,\Delta,\dots,\Delta) &=(\nabla,\dots,\nabla,\psi_{i,1},\Delta,\dots,\Delta) \\[-2mm]
&<(\nabla,\dots,\nabla,\psi_{i,2},\Delta,\dots,\Delta) \\
& \hspace{2mm} \vdots \\
&< (\nabla,\dots,\nabla,\psi_{i,l_i},\Delta,\dots,\Delta)=(\underbrace{\nabla,\dots,\nabla}_{i},\Delta,\dots,\Delta).
\end{align*}
Concatenating these chains for $i=1,\dots,b$ yields a chain of requisite length in $[\Delta_P,\lambda]$, and
establishes~\eqref{eq:Lsum}.

For a fixed partition $\bI\preceq\ldb r\rdb$, the set $\cP_\bI$ consists of all partitions $\bP=\set{P_I}{ I\in \bI}\preceq\ldb n\rdb$ with ${P_I\cap\im(a)=\set{ a_i}{ i\in I}}$.
If $|\bI |=q$ then $|\cP_\bI |=q^{n-r}$.
As there are $S(r,q)$ partitions of $[r]$ with $q$ blocks we conclude that
\begin{equation}\label{eq:Lsum3}
\sum_{\bI} |\cP_\bI|=\sum_{q=1}^r S(r,q)q^{n-r}.
\end{equation}
Putting together \eqref{eq:Lsum}, \eqref{eq:Lsum2} and \eqref{eq:Lsum3}, and remembering $b=B(r)$, completes the proof.
\end{proof}

\begin{lemma}
\label{la:HtDr}
$\Ht[\Delta_P,\rho]=1-2^r+\prod_{q=1}^r (|A_q|+1)$.
\end{lemma}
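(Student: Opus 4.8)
The plan is to mirror the proof of Lemma \ref{la:HtDl}, with nonempty subsets of $[r]$ playing the role that partitions of $[r]$ played there. First I would reduce the statement to a height computation for a direct product of full equivalence lattices, by establishing
\[
\Ht[\Delta_P,\rho] = \Ht\Big(\prod_{\emptyset\neq I\subseteq[r]}\Eq(\cC_I)\Big).
\]
The inequality $\leq$ is immediate from the subdirect embedding of Theorem \ref{thm:main}\ref{it:main3} (that is, Theorem \ref{thm:DeR}). For the reverse inequality I would exhibit an explicit chain in $\iR=[\Delta_P,\rho]$ realising the full height of the product. To do this, list the $2^r-1$ nonempty subsets of $[r]$ as $I_1,\dots,I_{2^r-1}$ in an order extending containment (so $I_i\subseteq I_j\implies i\leq j$), and in each $\Eq(\cC_{I_i})$ fix a maximal chain $\Delta=\psi_{i,1}<\cdots<\psi_{i,l_i}=\nabla$ of length $l_i=\Ht(\Eq(\cC_{I_i}))=|\cC_{I_i}|$. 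Concatenating the ``block'' chains
\[
(\underbrace{\nabla,\dots,\nabla}_{i-1},\psi_{i,1},\Delta,\dots,\Delta) < \cdots < (\underbrace{\nabla,\dots,\nabla}_{i-1},\psi_{i,l_i},\Delta,\dots,\Delta) = (\underbrace{\nabla,\dots,\nabla}_{i},\Delta,\dots,\Delta)
\]
over $i=1,\dots,2^r-1$ gives a chain of length $\sum_i l_i-(2^r-1)+1$ inside $\prod_I\Eq(\cC_I)$, exactly as in Lemma \ref{la:HtDl}.

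The point requiring care — and what I expect to be the only genuinely non-formal step — is checking that every tuple occurring in this chain lies in the image described in Theorem \ref{thm:DeR}, i.e.\ satisfies $\psi_I\restr_J\subseteq\psi_J$ for all $\es\neq J\subseteq I\subseteq[r]$. This comes down to two elementary facts about the restriction maps $\cC_I\to\cC_J:C\mapsto C\restr_J$ (for $J\subseteq I$): each such map is surjective, so $\nabla_{\cC_I}\restr_J=\nabla_{\cC_J}$, and $\Delta_{\cC_I}\restr_J\subseteq\Delta_{\cC_J}$. Granting these, the verification is the same bookkeeping as in the $\lambda$ case: whenever the $I_i$-coordinate is the one being varied, every coordinate $I_k$ with $I_k\subseteq I_i$ has $k\leq i$, hence equals $\nabla$ (or $I_i$ itself), while every coordinate $I_m$ with $I_i\subseteq I_m$ has $m\geq i$, hence equals $\Delta$ (or $I_i$ itself), so the compatibility condition holds at every stage.

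It then remains to evaluate the product. Applying \eqref{eq:htprod} repeatedly, and using that the equivalence-relation lattice on an $m$-element set has height $m$, gives
\[
\Ht\Big(\prod_{\emptyset\neq I\subseteq[r]}\Eq(\cC_I)\Big) = \sum_{\emptyset\neq I\subseteq[r]}|\cC_I| - (2^r-1) + 1 = \sum_{\emptyset\neq I\subseteq[r]}|\cC_I| - 2^r + 2.
\]
Since a cross-section in $\cC_I$ amounts to a choice of one element of $A_i$ for each $i\in I$, we have $|\cC_I|=\prod_{i\in I}|A_i|$, and expanding $\prod_{q=1}^r(1+|A_q|)$ shows $\sum_{\emptyset\neq I\subseteq[r]}\prod_{i\in I}|A_i| = \prod_{q=1}^r(1+|A_q|)-1$ (the subtracted term corresponding to $I=\es$). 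Substituting back yields $\Ht[\Delta_P,\rho] = \prod_{q=1}^r(|A_q|+1) - 1 - 2^r + 2 = 1-2^r+\prod_{q=1}^r(|A_q|+1)$, as required. (The case $r=1$, formally excluded from Theorem \ref{thm:main}, is trivial directly: there $\rho=\nabla_P$ and $P$ is a right-zero semigroup of size $|A_1|$, so $[\Delta_P,\rho]=\Eq(P)$ has height $|A_1| = 1-2+(|A_1|+1)$.)
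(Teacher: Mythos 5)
Your proposal is correct and follows essentially the same route as the paper, which itself proves this lemma by declaring it "analogous to the previous lemma" and recording only the reduction $\Ht[\Delta_P,\rho]=\Ht\bigl(\prod_{I}\Eq(\cC_I)\bigr)=\sum_I|\cC_I|-2^r+2$ together with $|\cC_I|=\prod_{i\in I}|A_i|$ and the final product expansion. The extra details you supply (ordering the subsets by containment, the block-chain construction, and the check that each tuple satisfies $\psi_I\restr_J\subseteq\psi_J$) are exactly the content the paper delegates to the proof of Lemma \ref{la:HtDl}, and your verification of the image condition is sound.
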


\begin{proof}
This is analogous to the previous lemma, and we just indicate the main points. To begin with:
\begin{align*}
\Ht[\Delta_P,\rho] &= \Ht\Big(\prod_{\emptyset\neq I\subseteq [r]} \Eq(\cC_I)\Big)&&\text{exactly as in Lemma \ref{la:HtDl}}
\\
&= \sum_I \Ht(\Eq(\cC_I)) -2^r+2 &&\text{as there are $2^r-1$ possible $I$}
\\
&=\sum_I |\cC_I |-2^r+2.&&
\end{align*}
The sum here is over all $\es\not=I\subseteq [r]$, and each $\cC_I$ consists of all cross-sections of $\set{ A_i}{ i\in I}$.
Thus $|\cC_I|=\prod_{i\in I} |A_i|$. The proof concludes with the observation that $\sum_I\prod_{i\in I}|A_i|=\prod_{q=1}^r(|A_i|+1)-1$.
\end{proof}

Theorem \ref{thm:height} now follows by combining Lemmas \ref{lem:Ht1}, \ref{la:HtDl} and \ref{la:HtDr} with equations \eqref{eq:HtTr} and \eqref{eq:HtDeka}.

\section{Concluding remarks}\label{sect:conc}

To conclude the paper, we discuss a number of natural directions for further study.  

First, one could try to classify the congruences of the variant $\T_X^a$ itself.  While this is certainly an appealing problem, it appears to be very challenging.  Indeed, while $\Reg(\T_4^a)$ has $271$ congruences for $a=\trans{1&2&3&4\\ 1&2&3&3}\in\T_4$, GAP calculations show that there are $21263$ congruences of $\T_3^b$ for $b=\trans{1&2&3\\1&2&2}\in\T_3$, and $3137$ \emph{principal} congruences of $\T_4^a$, i.e.~congruences of the form~$(f,g)^\sharp$, generated by the single pair $(f,g)$.  Moreover, there even exist such congruences $(f,g)^\sharp$ that do not relate any other non-trivial pairs; this happens when $af=ag$ and $fa=ga$.  In any case, understanding the entire lattice $\Cong(\T_X^a)$ does not seem feasible at present.  

Another enticing direction is to consider (regular) variants of other natural families of semigroups whose congruence lattices are already known, e.g.~linear monoids \cite{Malcev1953,DE2018}, infinite transformation monoids \cite{Malcev1952,Sandwich2} or diagram monoids~\cite{ER2022,EMRT2018,DDE2021}.  

It would also be interesting to examine the extent to which the methods of the current paper apply to more general semigroup variants, or even to sandwich semigroups in locally small categories \cite{Sandwich1,Sandwich2}.  The main challenge to overcome here is the fact that Mal'cev's classification of the congruences of the underlying semigroup $\T_X$ played a pivotal role in a number of our arguments above.

\footnotesize
\def\bibspacing{-1.1pt}
\bibliography{biblio}
\bibliographystyle{abbrv}

\end{document}